\newtheorem{theorem}{Theorem}[section]
\newtheorem{lemma}[theorem]{Lemma}
\newtheorem{proposition}[theorem]{Proposition}
\newtheorem{conjecture}[theorem]{Conjecture}
\newtheorem{remark}[theorem]{Remark}
\newtheorem{claim}[theorem]{Claim}
\newcommand{\ma}{\mathcal}
\newcommand{\mr}{\mathscr}
\newcommand{\s}{\subseteq}
\newcommand{\fr}{\frac}
\newcommand{\lc}{\lceil}
\newcommand{\rc}{\rceil}
\newcommand{\lf}{\lfloor}
\newcommand{\rf}{\rfloor}
\begin{document}

\title{Sparse hypergraphs: new bounds and constructions}
\date{}
\author{Gennian Ge\footnote{School of Mathematical Sciences, Capital Normal University, Beijing 100048, China. Email: gnge@zju.edu.cn.}~
and
Chong Shangguan\footnote{Research Center for Mathematics and Interdisciplinary Sciences, Shandong University, Qingdao 266237, China. Email: theoreming@163.com}
}

\maketitle

\begin{abstract}
    Let $f_r(n,v,e)$ denote the maximum number of edges in an $r$-uniform hypergraph on $n$ vertices, in which the union of any $e$ distinct  edges contains at least $v+1$ vertices. The study of $f_r(n,v,e)$ was initiated by Brown, Erd{\H{o}}s and S{\'o}s more than forty years ago.
    In the literature, the following conjecture is well known.

    \vspace{5pt}

    \textbf{Conjecture:} $n^{k-o(1)}<f_r(n,er-(e-1)k+1,e)=o(n^k)$ holds for all fixed integers $r>k\ge 2$ and $e\ge 3$ as $n\rightarrow\infty$.
    \vspace{5pt}

    For $r=3, e=3, k=2$, the bound $n^{2-o(1)}<f_3(n,6,3)=o(n^2)$ was proved by the celebrated (6,3)-theorem of Ruzsa and Szemer{\'e}di. In this paper, we add more evidence for the validity of the conjecture. On one hand, using the hypergraph removal lemma we show that the upper bound part of the conjecture is true for all fixed integers $r\ge k+1\ge e\ge3$.
    On the other hand, using tools from additive number theory we present several constructions showing that the lower bound part of the conjecture is true for $r\ge3$, $k=2$ and $e=4,5,7,8$.
    Prior to our results, all known constructions that match the conjectured lower bound satisfy either $r=3$ or $e=3$. Our constructions are the first ones in the literature that break this barrier.
\end{abstract}

\noindent{\it Keywords:} hypergraph Tur\'an problem, sparse hypergraphs, hypergraph removal lemma, hypergraph rainbow cycles, solution-free set

\vspace{5pt}

\noindent{\it Mathematics subject classifications:} 05C65, 05D99, 11B75


\section{Introduction}
\noindent Since the pioneering work of Tur\'an \cite{turan}, the study of Tur\'an-type problems has been playing a central role in the field of extremal graph theory.
 In this paper we consider a classical hypergraph Tur\'an problem introduced by Brown, Erd{\H{o}}s and S{\'o}s \cite{bes2} in the early 1970's.

  Let us begin with some basic notations.
  Given a finite set $V(\ma{H})$, a {\it hypergraph} $\ma{H}$ is simply a family of subsets of $V(\ma{H})$, where the members of $V(\ma{H})$ and $\ma{H}$ are called {\it vertices} and {\it edges}, respectively.
  $\ma{H}$ is said to be an {\it $r$-uniform hypergraph} (or $r$-graph for short) if $|A|=r$ for each $A\in \ma{H}$.
  Moreover, $\ma{H}$ is called {\it linear} if for any distinct $A,B\in \ma{H}$, $|A \cap B|\le1$.
  For a set $\mr{H}$ of $r$-graphs, an $r$-graph is said to be {\it $\mr{H}$-free} if it contains none of the members of $\mr{H}$ as a subhypergraph. The {\it Tur{\'a}n number $ex_r(n,\mr{H})$} is the maximum number of edges in an $\mr{H}$-free $r$-graph on $n$ vertices. For simplicity we view the vertex set as the set (or a subset) of the first $n$ integers, denoted as $[n]:=\{1,\ldots,n\}$. 

  For integers $r\ge 2,e\ge 2,v\ge r+1$, let $\ma{G}_r(v,e)$ be the family of all $r$-graphs formed by $e$ edges and at most $v$ vertices, that is,
$$\ma{G}_r(v,e)=\big\{\ma{H}\s\binom{[n]}{r}:|\ma{H}|=e,|V(\ma{H})|\le v\big\},$$
where for any finite set $X\s[n]$, $\binom{X}{r}$ denotes the family of $\binom{|X|}{r}$ distinct $r$-subsets of $X$.
It is clear by definition that an $r$-graph is $\ma{G}_r(v,e)$-free if and only if the union of any $e$ distinct edges of it contains at least $v+1$ vertices.
Such $r$-graphs are also termed {\it sparse} \cite{Furediconst}.
As in the previous papers (see, e.g. \cite{t=3}), we use the notation $f_r(n,v,e):=ex(n,\ma{G}_r(v,e))$.



In the literature, the study of $f_r(n,v,e)$ for $e=2$ or $r=2$ has been quite extensive (see, e.g. \cite{Erdos1964,Erdosr=2,Rodlpacking}). In this paper we are interested in the behavior of $f_r(n,v,e)$ when $r,v,e$ are fixed integers satisfying $r\ge 3,e\ge 3, v\ge r+1$ and $n$ tends to infinity. It was shown in \cite{bes2} that in general
\begin{equation}\label{BESbound}
  \begin{aligned}
   \Omega(n^{\fr{er-v}{e-1}})=f_r(n,v,e)=O(n^{\lc\fr{er-v}{e-1}\rc}),
  \end{aligned}
\end{equation}
 where the upper bound follows from a double counting argument and the lower bound was proved by a standard probabilistic method.
By \eqref{BESbound} it is clear that for fixed integers $r>k\ge 2$ and $e\ge3$, $$f_r(n,er-(e-1)k,e)=\Theta(n^k).$$
However, the asymptotic behaviour of $f_r(n,er-(e-1)k+1,e)$ is much more difficult to determine. 
Indeed, there is a well known conjecture (see \cite{bes2,t=3}) which states the following:

\begin{conjecture}\label{conjecture}
    $n^{k-o(1)}<f_r(n,er-(e-1)k+1,e)=o(n^k)$ holds for all fixed integers $r>k\ge 2$ and $e\ge 3$. 
\end{conjecture}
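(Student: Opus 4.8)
Because the displayed statement is a well-known open conjecture, I do not propose to prove it in full; rather, I will describe how to establish each of its two inequalities in the ranges where it seems within reach: the upper bound for $r\ge k+1\ge e\ge 3$ (via a shadow argument and the hypergraph removal lemma) and the lower bound for $k=2$ and small $e$ (via algebraic constructions).

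\emph{Upper bound, $r\ge k+1\ge e\ge 3$.} Put $v=e(r-k)+k+1$ and let $\ma{H}$ be $\ma{G}_r(v,e)$-free on $[n]$. First note that every $k$-subset of $[n]$ lies in at most $e-1$ edges of $\ma{H}$: any $e$ edges through a common $k$-set span at most $k+e(r-k)=v-1$ vertices, a forbidden configuration; this alone gives $|\ma{H}|=O(n^k)$, and the point is to replace $O$ by $o$. I would first treat $r=k+1$: let $G$ be the $k$-shadow of $\ma{H}$ (the $k$-uniform hypergraph of all $k$-sets covered by some edge), so each edge of $\ma{H}$ yields a copy of $K_{k+1}^{(k)}$ in $G$. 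The crucial claim is that these are the \emph{only} copies of $K_{k+1}^{(k)}$: if a $(k+1)$-set $R\notin\ma{H}$ had all its $k$-subsets covered, then, since each edge other than $R$ covers at most one $k$-subset of $R$, at least $k+1\ge e$ distinct edges are needed, and any $e$ of them span at most $(k+1)+e(r-k)=v$ vertices, which is forbidden. Hence $G$ has exactly $|\ma{H}|=O(n^k)=o(n^{k+1})$ copies of $K_{k+1}^{(k)}$, so the $K_{k+1}^{(k)}$-removal lemma (the triangle removal lemma when $k=2$) destroys all of them by deleting $o(n^k)$ $k$-edges; since each deletion kills at most $e-1$ of the $|\ma{H}|$ copies, this forces $|\ma{H}|=o(n^k)$. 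For $r>k+1$ I would reduce to this: replace each edge $A$ by an arbitrary $(k+1)$-subset $\psi(A)$, obtaining a $(k+1)$-uniform $\ma{H}^{\ast}$. The map $\psi$ is at most $(e-1)$-to-one (any $(k+1)$-set lies in at most $e-1$ edges), so $|\ma{H}^{\ast}|\ge|\ma{H}|/(e-1)$; and if $Q_1,\dots,Q_e\in\ma{H}^{\ast}$ span at most $e+k+1$ vertices, their necessarily distinct preimages span at most $(e+k+1)+e(r-k-1)=v$ vertices, forbidden in $\ma{H}$. Thus $\ma{H}^{\ast}$ is $\ma{G}_{k+1}(e+k+1,e)$-free, the base case gives $|\ma{H}^{\ast}|=o(n^k)$, and hence $|\ma{H}|=o(n^k)$.

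\emph{Lower bound, $k=2$.} Here I would extend the Ruzsa--Szemer\'edi construction: take a subset $B$ of a cyclic group of order $\Theta(n)$ avoiding every nontrivial solution of a prescribed list of low-complexity linear equations (a Behrend-type progression-free set, or a sum-free-type set), with $|B|=n^{1-o(1)}$, and build an $r$-uniform hypergraph with $\Theta(n\cdot|B|)=n^{2-o(1)}$ edges whose edges encode additive configurations built from $B$, arranged so that $e$ edges spanning at most $e(r-2)+3$ vertices would force one of the forbidden equations to have a nontrivial solution in $B$. The admissible values $e\in\{4,5,7,8\}$ (and the failure of, e.g., $e=6$) should reflect exactly which equation systems --- read off from an $e$-edge ``rainbow cycle'', i.e.\ a closed walk of length $e$ --- admit only degenerate solutions over such a set.

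\emph{Main obstacle.} It is the gap itself. The shadow argument genuinely needs $e\le k+1$ (so that a $(k+1)$-set forces at least $e$ distinct covering edges) and says nothing when $e>k+1$; already the innocent-looking question whether $f_3(n,7,4)=o(n^2)$ is open. On the other side, each construction appears to require a tailor-made equation system with no dense nontrivial solution set, and producing such systems for all $r>k\ge 2$ and $e\ge 3$ --- so as to match the conjectured $n^{k-o(1)}$ --- is precisely what remains to be done.
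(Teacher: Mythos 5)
You correctly treat the displayed statement as an open conjecture and aim only at the two partial results the paper actually establishes: the upper bound for $r\ge k+1\ge e\ge 3$ (Theorem \ref{upperbd}) and the lower bound for $k=2$, $e\in\{4,5,7,8\}$ (Theorem \ref{4578}). Your upper-bound argument is correct but takes a genuinely different route from the paper's. The paper keeps the uniformity $r$ fixed, first prunes $\ma{H}$ to a subfamily $\ma{H}'$ with pairwise intersections of size at most $k-1$, builds the $k$-uniform hypergraph $\bigcup_{A\in\ma{H}'}K^k_r(A)$, and applies the removal lemma in the contrapositive direction: since $\Omega(n^k)$ edge-disjoint copies of $K^{(k)}_r$ cannot be destroyed cheaply, there must be $\delta n^r$ copies in total, and a separate $\ma{O}(n^{r-1})$ count rules out the degenerate copies whose $k$-edges repeat an $A$. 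You instead first shrink each $r$-edge to a $(k+1)$-subset (an at most $(e-1)$-to-one operation that preserves $\ma{G}_{k+1}(e+k+1,e)$-freeness), reducing to $r=k+1$, and then show that in the $k$-shadow the \emph{only} copies of $K^{(k)}_{k+1}$ are those coming from edges of $\ma{H}$ --- the key point being that a spurious clique on a $(k+1)$-set $R\notin\ma{H}$ would require $k+1\ge e$ distinct covering edges, each meeting $R$ in exactly $k$ vertices, whose union has at most $e+k+1=v$ vertices. This lets you invoke the removal lemma in the direct direction with an exact copy count and dispense with both the pruning step and the degenerate-copy count; the trade-off is that your reduction is tailored to the exponent $k+1$ and does not immediately yield the paper's refinement (Theorem \ref{generalupperbd}), which exploits cliques $K^k_{k+i+1}$ inside a genuine $K^{(k)}_r$ to cover $e\le\binom{k+i+1}{k}$.

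On the lower bound your sketch matches the paper's strategy (Behrend-type sum-free sets encoding a Ruzsa--Szemer\'edi-style $r$-partite construction), but one detail of the mechanism is off: the construction does not forbid a closed walk of length $e$ for each target $e$. It forbids only rainbow cycles of lengths $3$ and $4$ (which is what the $R_4$-sum-free set delivers), and the passage from that to $\ma{G}_r(e(r-2)+3,e)$-freeness for $e=4,5,7,8$ is a separate, purely combinatorial analysis (Theorems \ref{3r-3im4r-5}--\ref{8r-13}), including the classification of the unique $G_{3\times3}$ configuration that explains why $e=6$ escapes. Since you present this part only as a programme, there is no gap to point to, but be aware that the equation systems you would need are those of short rainbow cycles, not of length-$e$ walks.
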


The initial case $r=3,k=2,e=3$ was famously solved by the (6,3)-theorem of Ruzsa and Szemer{\'e}di \cite{RS}, which stated that

\begin{equation}\label{63}
    \begin{aligned}
        n^{2-o(1)}<f_3(n,6,3)=o(n^2).
    \end{aligned}
\end{equation}

\noindent It is noteworthy that \eqref{63} is truly a cornerstone in the field of extremal graph theory. Indeed, the upper bound was proved by the triangle removal lemma, which was a remarkable application of the regularity lemma \cite{Szemeredi}. 
On the other hand, the proof of the lower bound was based on Behrend's construction \cite{Behrend} on 3-term arithmetic progression free sets, and it essentially related extremal graph theory to additive combinatorics, which turned out to be very influential.

The result of Ruzsa and Szemer{\'e}di was extended by Erd{\H{o}}s, Frankl and R{\"o}dl \cite{erdos2} to

\begin{equation}\label{363}
    \begin{aligned}
        n^{2-o(1)}<f_r(n,3r-3,3)=o(n^2)
    \end{aligned}
\end{equation}

\noindent for arbitrary fixed $r\ge3$, and further by Alon and Shapira \cite{t=3} to

\begin{equation}\label{3r63}
    \begin{aligned}
        n^{k-o(1)}<f_r(n,3r-2k+1,3)=o(n^k)
    \end{aligned}
\end{equation}

  \noindent for arbitrary fixed $r>k\ge2$. In \cite{t=3} the authors also showed that $n^{2-o(1)}<f_3(n,7,4)$ and $n^{2-o(1)}<f_3(n,8,5)$. Prior to this paper, these two sporadic cases together with the lower bound of (\ref{3r63}) (note that (\ref{63}) and (\ref{363}) are in fact special cases of (\ref{3r63})) were all known constructions that achieve the lower bound of Conjecture \ref{conjecture}. $\rm{S\acute{a}rk\ddot{o}zy}$ and Selkow \cite{other2,other1} considered the upper bound of the conjecture and they showed that

\begin{equation}\label{4r63}
        \begin{aligned}
            f_r(n,4r-3k+1,4)=o(n^k)
        \end{aligned}
    \end{equation}

\noindent for all fixed $r>k\ge 3$, and

\begin{equation}\label{asymtotic}
        \begin{aligned}
            f_r(n,er-(e-1)k+\lf\log_2 e\rf,e)=o(n^k)
        \end{aligned}
    \end{equation}

\noindent for all fixed $r>k\ge 2$ and $e\ge 3$. It is clear that (\ref{asymtotic}) implies the upper bounds stated in (\ref{63}), (\ref{363}) and (\ref{3r63}). However, there is still a gap from the conjectured upper bound for $e\ge 4$. Later, Nagle, R\"odl and Schacht \cite{nagle-rodl-schacht} showed that

\begin{equation}\label{added}
    f_r(n,(k+1)r-k^2+1,k+1)=o(n^k)
\end{equation}

\noindent for arbitrary fixed $r\ge k+1=e\ge 3$. A recent paper of Solymosi and Solymosi \cite{smallcores} proved that $f_3(n,14,10)=o(n^2)$, which improves the result of (\ref{asymtotic}) for the special case $r=3$, $k=2$ and $e=10$. This is the first improvement of the asymptotic bound (\ref{asymtotic}) in the silence over ten years. Finally, we remark that by \eqref{BESbound} it is easy to see that
\begin{equation}\label{randombound}
  \Omega(n^{k-\frac{1}{e-1}})=f_r(n,er-(e-1)k+1)=O(n^{k}).
\end{equation}
Recently, the above lower bound was slightly improved by Shangguan and Tamo \cite{shangguan2019universally} to
$$f_r(n,er-(e-1)k+1)=\Omega\big(n^{k-\frac{1}{e-1}}(\log n)^{\frac{1}{e-1}}\big).$$

In this paper, we add more evidence for the validity of Conjecture \ref{conjecture}. We will summarize our main results in the remaining part of this section. We will frequently use the standard Bachmann-Landau notations $\Omega(\cdot),\Theta(\cdot),O(\cdot)$ and $o(\cdot)$, whenever the constants are not important. All logarithms are of base 2.

\subsection{Upper bounds from the hypergraph removal lemma}



\noindent As mentioned previously, (\ref{63}) was proved by the triangle removal lemma, which is the first version of the removal lemma. Indeed, the upper bounds in (\ref{363}), (\ref{4r63}) and \eqref{added} were all proved by different versions of removal lemmas, namely, the graph removal lemma for complete graphs \cite{erdos2}, the hypergraph removal lemma for $K_4^3$ \cite{franklhypergraphremoval} and the hypergraph removal lemma for $K_{k+1}^k$ \cite{gowers2}, \cite{rodl2}, \cite{rodl3}, where $K_t^r$ denotes the complete $r$-graph on $t$ vertices.

After the efforts of many researchers (see, e.g. \cite{gowers1}, \cite{gowers2}, \cite{rodl1}, \cite{rodl2}, \cite{rodl3}, \cite{rodl4}), the following modern version of the hypergraph removal lemma is eventually known.

\begin{lemma}[Hypergraph removal lemma, see, e.g. \cite{graphremovallemmas} Theorem 1.2]\label{hypergraph}
    For any $r$-graph $G$ and any $\epsilon>0$, there exists $\delta>0$ such that any $r$-graph on $n$ vertices with at most $\delta n^{v(G)}$ copies of $G$ can be made $G$-free by removing at most $\epsilon n^r$ edges.
\end{lemma}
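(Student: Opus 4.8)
The plan is to derive this statement from the hypergraph regularity method, following the standard two–step template: \emph{first regularize, then count}. Since the lemma as stated is the deep result, the sketch should make clear which external machinery it rests on; everything here is conditional on the hypergraph regularity lemma and the associated counting lemma (in the form of Gowers, or of Nagle--R\"odl--Schacht and R\"odl--Skokan), which for $r\ge 3$ are themselves substantial theorems.

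First I would fix the target hypergraph $G$ on $v(G)$ vertices and choose a quasirandomness parameter and a density threshold $\eta>0$, both small relative to $\epsilon$ and $v(G)$ in a precise (but soft) way, to be pinned down at the end. Applying the hypergraph regularity lemma to an arbitrary $r$-uniform hypergraph $\ma{H}$ on $n$ vertices, one obtains a partition of $V(\ma{H})$ into a bounded number $t=t(\epsilon,G)$ of parts, together with compatible partitions of the $j$-subsets for each $2\le j\le r-1$, so that $\ma{H}$ is quasirandom with respect to the resulting system of polyads, with all error parameters as small as desired. The key point is that $t$ is bounded independently of $n$.

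Next comes the cleanup: delete from $\ma{H}$ every edge lying in a polyad that is not quasirandom, or has relative density below $\eta$, or sits over an underlying lower-order block that is itself too sparse. A routine counting, using that there are only $O_t(1)$ polyads and that the parameters were chosen small relative to $\epsilon$ and $t$, shows the total number of deleted edges is at most $\epsilon n^r$; call the result $\ma{H}'$. I then claim $\ma{H}'$ is $G$-free. Indeed, a copy of $G$ in $\ma{H}'$ assigns to each vertex of $G$ a part of the partition (after passing to a refinement we may assume no two vertices of $G$ share a part) and to the image of each edge of $G$ a dense, quasirandom polyad; the hypergraph counting lemma then guarantees at least $c\,n^{v(G)}$ labelled copies of $G$ in $\ma{H}$ over this configuration, for some $c=c(\eta,G,t)>0$. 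Choosing $\delta=\delta(\epsilon,G)$ to be any positive number below this $c$, any $\ma{H}$ with at most $\delta n^{v(G)}$ copies of $G$ must already have had $\ma{H}'$ itself $G$-free — and $\ma{H}'$ was obtained by removing at most $\epsilon n^r$ edges, which is precisely the conclusion.

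The main obstacle, and the reason this is a hard theorem rather than a routine argument, is concentrated entirely in the two black boxes invoked above. Unlike the graph case $r=2$, for $r\ge 3$ quasirandomness "relative to a vertex partition" is too weak to control the count of copies of $G$: one must also regularize the layers of $j$-subsets for $2\le j\le r-1$ and carefully track how quasirandomness of each layer feeds into the next, which is the source of the long hierarchy of parameters $\delta\ll\cdots\ll d_{r-1}\ll d_{r-2}\ll\cdots$ in the literature. Once those tools are granted, the cleanup estimate and the contrapositive repackaging are soft; the difficulty lies in establishing hypergraph regularity and the counting lemma in the first place.
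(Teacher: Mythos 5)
The paper does not prove Lemma \ref{hypergraph} at all: it is imported as a black box from the literature (the survey \cite{graphremovallemmas} and the works of Gowers, Nagle--R\"odl--Schacht, R\"odl--Skokan, and Tao cited in the introduction), so there is no in-paper argument to compare yours against. Your outline is the standard ``regularize, clean up, count'' proof from those sources, and the logical skeleton is sound: the cleanup step removes at most $\epsilon n^r$ edges, and the counting lemma forces any surviving copy of $G$ to come with $c\,n^{v(G)}$ companions, so taking $\delta<c$ closes the contrapositive. Be aware, though, that as a self-contained proof it establishes nothing beyond the reduction, since the entire content of the theorem lives in the two results you invoke without proof --- the hypergraph regularity lemma and the associated counting lemma --- which is exactly the posture the paper itself adopts by citing rather than proving the lemma; for the purposes of this paper, treating it as known is the appropriate choice.
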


Note that here $v(G)=|V(G)|$ is the number of vertices of $G$. Using Lemma \ref{hypergraph}, we are able to prove the following theorem.

\begin{theorem}\label{upperbd}
   $f_r(n,er-(e-1)k+1,e)=o(n^k)$ holds for all fixed integers 
   $r\ge k+1\ge e\ge 3$.
\end{theorem}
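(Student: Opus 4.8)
The plan is to derive the bound $f_r(n,e(r-k)+k+1,e)=o(n^k)$ by reducing it, via a standard link/neighbourhood argument, to an instance of the hypergraph removal lemma (Lemma~\ref{hypergraph}) applied to a suitably chosen $e$-partite $e$-uniform auxiliary hypergraph. The key structural observation is that the condition $r\ge k+1\ge e\ge 3$ forces $r-k\ge 1$ and the number of ``private'' vertices available per edge, namely $r-k$, to be at least $1$, while $e\le k+1$ ensures that $e$ edges in ``general position'' already consume $k+1$ vertices if they pairwise share at most the expected amount. Concretely, suppose $\ml{H}$ is an $r$-uniform hypergraph on $n$ vertices with $|\ml{H}|\ge \epsilon n^k$ edges and no $e$ edges spanned by $e(r-k)+k+1$ vertices; I want to contradict this for large $n$.

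\medskip

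First I would fix an arbitrary set $S$ of $k-e+1$ vertices (note $k-e+1\ge 0$, and when $k=e-1$ this set is empty) and pass to the link of $S$, or more flexibly, iteratively peel off vertices so as to build a configuration of $e$ edges that is forced to span few vertices. The cleaner route, which I expect the authors take, is: for each edge $A\in\ml{H}$, if we could find $e$ edges whose union has size exactly $e(r-k)+k$, that would be a $\ml{G}_r(e(r-k)+k,e)$-configuration — which is allowed — but adding even one more shared vertex among them drops the union to $\le e(r-k)+k-1+\cdots$. So the strategy is to encode a forbidden configuration as a copy of a small $e$-uniform hypergraph $G$ in an auxiliary hypergraph $\ml{A}$ built from $\ml{H}$: the vertex set of $\ml{A}$ is $V(\ml{H})$ together with (or identified with) the edge set, and a copy of $G$ corresponds exactly to $e$ edges of $\ml{H}$ spanning at most $e(r-k)+k$ vertices \emph{with the intersection pattern of a tight/loose structure}. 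Then $\ml{H}$ having $\epsilon n^k$ edges translates, by double counting over the $k$-subsets or the $(r-k)$-private-vertex slots, into $\ml{A}$ containing $\gg n^{v(G)}$ copies of $G$; the removal lemma then lets us delete $o(n^r)$ edges of $\ml{H}$ to destroy all copies, contradicting the edge count once we check that $o(n^r)$ is $o(\epsilon n^k)$ — which needs $r>k$, guaranteed by hypothesis. Actually, since we are counting copies of a $k$-vertex-ish object inside $\ml{H}$ directly, the more natural form is Lemma~\ref{hypergraph} applied with $r$ replaced by the uniformity of $\ml{A}$ and $v(G)=k+1$-ish, so that ``at most $\delta n^{k+1}$ copies'' becomes the relevant threshold.

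\medskip

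The main work, and the step I expect to be the principal obstacle, is identifying the right auxiliary hypergraph $G$ and verifying the two quantitative directions: (i) that $\ml{G}_r$-freeness of $\ml{H}$ with parameter $v=e(r-k)+k+1$ \emph{exactly} corresponds to $G$-freeness of $\ml{A}$ — in particular that the ``$+1$'' in the span is precisely what distinguishes the forbidden copy of $G$ from the permitted generic configuration — and (ii) the supersaturation direction: $|\ml{H}|=\Omega(n^k)$ must force a \emph{polynomial in $n$} (indeed $\Omega(n^{v(G)})$ with the exponent matching) number of copies of $G$ in $\ml{A}$, not just one. Direction (ii) is where the constraint $e\le k+1$ is essential: with $e$ edges each contributing $r-k$ genuinely new vertices one accounts for $e(r-k)$ vertices, leaving a ``core'' of $k$ vertices to be shared, and $e\le k+1$ is exactly the regime where a greedy/random selection of $e$ edges through a common $k$-set behaves like a complete $e$-partite pattern — letting one count copies of $G$ by a Kővári–Sós–Turán-type or simple convexity argument starting from $\Omega(n^k)$ edges. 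Once the count $\Omega(n^{v(G)})$ is in hand, Lemma~\ref{hypergraph} finishes: make $\ml{A}$ (hence $\ml{H}$) free of the forbidden configuration by deleting $o(n^r)$ edges, but that is too few to remove $\Omega(n^k)$ edges for $n$ large since $r>k$ — contradiction. I would also double-check the degenerate boundary $e=k+1$ (smallest allowed $k$) and $r=k+1$ separately, as the private-vertex count $r-k=1$ is tight there and the auxiliary hypergraph $G$ may degenerate to a linear path or a matching, in which case one should confirm the removal lemma still applies with a nondegenerate vertex count $v(G)$.
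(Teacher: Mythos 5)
There is a genuine gap here, and it is twofold. First, the proposal never actually constructs the auxiliary hypergraph — you explicitly defer "identifying the right auxiliary hypergraph $G$" as "the principal obstacle," but that identification \emph{is} the proof. The paper's choice is quite different from the $e$-partite $e$-uniform object you gesture at: one first prunes $\mathcal{H}$ to a subfamily $\mathcal{H}'$ of size $\ge\frac{\epsilon}{e-1}n^k$ in which any two edges meet in at most $k-1$ vertices (possible because each edge can have at most $e-2$ neighbours meeting it in $\ge k$ vertices, else $e$ edges would span $\le er-(e-1)k<e(r-k)+k+1$ vertices), and then forms the \emph{$k$-uniform} hypergraph $\mathcal{H}^*=\bigcup_{A\in\mathcal{H}'}K^k_r(A)$, where $K^k_r(A)$ is the set of all $k$-subsets of $A$. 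The pruning makes the $K^k_r(A)$ pairwise edge-disjoint, so destroying all copies of $K^k_r$ in $\mathcal{H}^*$ requires deleting $\Omega(n^k)$ of its ($k$-uniform) edges — and since $\mathcal{H}^*$ is $k$-uniform, $\Omega(n^k)$ is exactly the removal-lemma threshold, yielding $\delta n^r$ copies of $K^k_r$. A routine count shows at most $O(n^{r-1})$ of these reuse two $k$-sets from the same $A$, so some copy has all $\binom{r}{k}$ edges coming from distinct members of $\mathcal{H}'$; picking a $K^k_{k+1}$ inside it and $e\le k+1$ of its edges gives $e$ edges of $\mathcal{H}'$ sharing a common structure on $k+1$ vertices, hence spanning at most $er-(ek-(k+1))=e(r-k)+k+1$ vertices — contradiction. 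Note that the hypothesis $e\le k+1$ enters only in this very last step (a $K^k_{k+1}$ has $k+1$ edges), not in any supersaturation count.

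Second, the quantitative bookkeeping in your first paragraph is backwards: you claim that deleting $o(n^r)$ edges contradicts $|\mathcal{H}|\ge\epsilon n^k$ "once we check that $o(n^r)$ is $o(\epsilon n^k)$ — which needs $r>k$." Since $r>k$, we have $n^r\gg n^k$, so a quantity that is $o(n^r)$ can vastly exceed $\epsilon n^k$; this deletion comparison cannot close the argument. The paper sidesteps the issue entirely by applying the removal lemma to the $k$-uniform $\mathcal{H}^*$ rather than to the $r$-uniform $\mathcal{H}$, precisely so that the $\epsilon n^k$ deletion threshold in the lemma matches the $\Omega(n^k)$ edge-disjoint copies. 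Relatedly, your direction (ii) — deducing $\Omega(n^{v(G)})$ copies directly from the edge count by a K\H{o}v\'ari--S\'os--Tur\'an or convexity argument — is not needed and would not obviously work; the many copies come from the contrapositive of the removal lemma applied to the edge-disjoint family, not from supersaturation.
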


This theorem implies that the upper bound part of Conjecture \ref{conjecture} is true for all fixed integers $r\ge k+1\ge e\ge3$. 
The first unsettled case is to determine whether $f_3(n,7,4)=o(n^2)$, which is known as the (7,4)-problem.


Our proof of Theorem \ref{upperbd} implies the following new asymptotic upper bound. 

\begin{theorem}\label{generalupperbd}
    For fixed integers $r,k,e,i$ satisfying $i\ge 0,r\ge k+i+1\ge2$ and $\binom{k+i+1}{k}\ge e\ge 3$, it holds that $f_r(n,er-(e-1)k+i+1,e)=o(n^k)$.
\end{theorem}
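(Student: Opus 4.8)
The plan is to prove Theorem~\ref{generalupperbd} by reducing it to the hypergraph removal lemma (Lemma~\ref{hypergraph}), exactly in the spirit of the classical Ruzsa--Szemer\'edi argument and its extension by Erd\H{o}s--Frankl--R\"odl. Suppose toward a contradiction that $\ma{H}$ is an $r$-uniform hypergraph on $n$ vertices with $|\ma{H}|\ge cn^k$ for some fixed $c>0$ and infinitely many $n$, and that $\ma{H}$ is $\ma{G}_r(e(r-k)+k+i+1,e)$-free, i.e.\ every $e$ edges of $\ma{H}$ span at least $e(r-k)+k+i+2$ vertices. First I would clean up $\ma{H}$: by a standard deletion step one may assume $\ma{H}$ is linear (any two edges meet in at most one vertex), since non-linear configurations are few when $e\ge 3$ and $k\ge 2$, and more generally I would delete edges lying in any ``dense'' bounded-size vertex set so that below the extremal threshold the only copies of small configurations are the ``generic'' ones. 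The point of the hypothesis $\binom{k+i+1}{k}\ge e$ is that it leaves enough room to embed a witnessing auxiliary hypergraph: on a vertex set of size $k+i+1$ there are $\binom{k+i+1}{k}$ possible $k$-subsets, and we want to attach a full edge of $\ma{H}$ to each of $e$ chosen $k$-subsets, with the $e$ edges otherwise vertex-disjoint outside that core.

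The key construction is the auxiliary $r$-uniform hypergraph $G$ (playing the role of $K_r$ or $K_4^3$ in the earlier proofs): take a ``core'' set $C$ of $k+i+1$ vertices, pick $e$ distinct $k$-subsets $S_1,\dots,S_e\subseteq C$, and for each $j$ adjoin $r-k$ new private vertices to $S_j$ to form an $r$-set $A_j$; then $G$ has $v(G)=(k+i+1)+e(r-k)$ vertices and $e$ edges, and any copy of $G$ in a linear hypergraph has its $e$ edges spanning exactly $v(G)=e(r-k)+k+i+1$ vertices --- one short of the forbidden bound, so a $\ma{G}_r(e(r-k)+k+i+1,e)$-free hypergraph contains \emph{no} copy of $G$ with a genuinely $(k+i+1)$-element core. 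The next step is a supersaturation / counting step: using that $|\ma{H}|\ge cn^k$ and that $\ma{H}$ is linear, I would show $\ma{H}$ contains many copies of the ``core'' structure --- roughly, many $(k+i+1)$-tuples each of which is covered by at least $e$ edges of $\ma{H}$ hanging off prescribed $k$-subsets --- and deduce that either $\ma{H}$ already contains a forbidden configuration, or $\ma{H}$ contains $\ge \delta n^{v(G)}$ copies of $G$ for a suitable constant. By Lemma~\ref{hypergraph} applied to this $G$, the latter forces that one can destroy all copies of $G$ by deleting $\epsilon n^r$ edges; but the removal lemma's conclusion, combined with the fact (from the counting step) that the copies of $G$ in $\ma{H}$ are ``spread out'' --- each edge of $\ma{H}$ lies in few copies of $G$ --- means only $o(n^k)$ edges actually participate, giving a contradiction with $|\ma{H}|\ge cn^k$ once $\epsilon$ is chosen appropriately. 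Equivalently, phrase it the usual way: the removal lemma says few copies of $G$ implies $G$-free after removing $o(n^r)$ edges, and our counting shows that a linear $\ma{H}$ with $\Omega(n^k)$ edges but $o(n^k)$ copies of $G$ (the $G$-free or near-$G$-free case, forced by sparsity) still retains $\Omega(n^k)$ edges after the removal, each of which must then lie in a copy of $G$, contradiction.

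More precisely, here is the cleaner packaging I would actually write. Build a single $k$-uniform auxiliary ``trace'' hypergraph from $\ma{H}$ if one likes, but it is simplest to go directly: since every edge of $\ma{H}$ contains $\binom{r}{k}$ many $k$-subsets and $|\ma{H}|\ge cn^k$, the multiset of $k$-subsets covered by edges of $\ma{H}$ has size $\ge \binom{r}{k}cn^k$, so by averaging there are many $k$-sets covered by $\ge 1$ edge; running the argument at the level of $C$-cores requires a second averaging to find $\ge \delta n^{k+i+1}$ cores $C$ each carrying the $e$ prescribed pendant edges, and here one uses that $e\le\binom{k+i+1}{k}$ so that $e$ distinct $k$-subsets of a $(k+i+1)$-set exist. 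Then each such core yields a copy of $G$ in $\ma{H}$ (with all pendant vertices distinct, discarding the $O(n^{v(G)-1})$ degenerate ones by linearity), giving $\ge \delta' n^{v(G)}$ copies; apply Lemma~\ref{hypergraph} to get a $G$-free subhypergraph $\ma{H}'$ obtained by deleting $\le\epsilon n^r$ edges, note $r\ge k+i+1\ge 2$ so $v(G)=e(r-k)+k+i+1$ makes sense, and derive the contradiction as above by choosing $\epsilon$ small relative to $c$ so that $\ma{H}'$ still has $\Omega(n^k)$ edges that must have been in copies of $G$.

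The main obstacle, I expect, is making the counting/supersaturation step honest: one must ensure that when a core $C$ is covered by $e$ edges attached to $e$ \emph{distinct} $k$-subsets of $C$, the resulting $e$-edge configuration genuinely has $v(G)=e(r-k)+k+i+1$ vertices and not fewer --- i.e.\ the pendant sets $A_j\setminus S_j$ are pairwise disjoint and disjoint from $C$, and the $S_j$ are genuinely distinct and each of size exactly $k$ inside $C$ --- because only then is this an instance of the forbidden family. Linearity of $\ma{H}$ handles most overlaps, but corner cases (two pendant edges sharing more than the core permits, or an edge meeting $C$ in more than $k$ vertices, or fewer than $k+i+1$ actual core vertices) must be shown to contribute only a lower-order count, which is a routine but careful inclusion--exclusion; a secondary subtlety is verifying that the $o(n^r)$ edges removed by the removal lemma cannot absorb all of the $\Omega(n^k)$ edges, which is immediate once $n$ is large since $\epsilon n^r = o(n^r)$ dwarfs nothing at the $n^k$ scale only if we instead track that each surviving edge lies in $\Omega(n^{v(G)-r})$ copies of $G$ --- so the right quantitative bookkeeping is to count (edge, copy-of-$G$) incidences, not edges directly.
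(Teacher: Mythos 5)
There is a genuine gap here, and it is structural rather than a matter of bookkeeping. The configuration $G$ you build (a $(k+i+1)$-element core $C$, $e$ distinct $k$-subsets $S_1,\dots,S_e\subseteq C$, each completed by $r-k$ private vertices) spans exactly $e(r-k)+k+i+1$ vertices, and since $\ma{G}_r(v,e)$ forbids $e$ edges on \emph{at most} $v$ vertices, $G$ is itself a member of the forbidden family --- not ``one short of the forbidden bound''. Consequently your hypothetical $\ma{H}$ contains \emph{zero} copies of $G$, and the supersaturation step on which your argument rests (``$\ma{H}$ contains $\ge\delta n^{v(G)}$ copies of $G$'') can never be established; indeed, if you could establish it, a single such copy would already be the desired contradiction and the removal lemma would be superfluous. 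You also invoke Lemma \ref{hypergraph} in the wrong direction: it converts ``few copies'' into ``few deletions suffice'', so to extract anything from it one must first \emph{certify that many deletions are needed}, and the only available certificate is a large family of edge-disjoint copies of the target configuration --- which cannot exist for a configuration that $\ma{H}$ forbids outright. This is precisely why the Ruzsa--Szemer\'edi-type proofs never apply a removal lemma to the forbidden configuration itself.

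The missing idea is the detour through a $k$-uniform auxiliary hypergraph, which is how the paper argues. First delete, for each edge $A_0$, the at most $e-2$ edges meeting it in at least $k$ vertices (more such edges would give $e$ edges on at most $er-(e-1)k<e(r-k)+k+i+1$ vertices), obtaining $\ma{H}'$ with $|\ma{H}'|\ge\frac{\epsilon}{e-1}n^k$ and pairwise intersections of size at most $k-1$; note this is weaker than the linearity you assume without justification, and it is all that is needed. Replace each $A\in\ma{H}'$ by the complete $k$-uniform hypergraph $K^k_r(A)$ on its vertex set; by the intersection bound these are pairwise edge-disjoint, so the resulting $k$-uniform hypergraph $\ma{H}^*$ needs at least $\frac{\epsilon}{e-1}n^k$ edge deletions to become $K^k_r$-free, and the removal lemma (applied to $K^k_r$ in the $k$-uniform world, where copies of $K^k_r$ are \emph{not} forbidden) yields at least $\delta n^r$ copies of $K^k_r$ in $\ma{H}^*$. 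Only $O(n^{r-1})$ of these copies contain two $k$-edges arising from the same $A\in\ma{H}'$, so some copy is ``rainbow'', its $\binom{r}{k}$ $k$-edges coming from $\binom{r}{k}$ distinct edges of $\ma{H}'$. Restricting this copy to any $k+i+1$ of its $r$ vertices (here $r\ge k+i+1$ is used) and choosing $e$ of the $\binom{k+i+1}{k}\ge e$ available $k$-subsets gives $e$ distinct edges $A_1,\dots,A_e$ of $\ma{H}'$ with $|A_1\cup\cdots\cup A_e|\le er-(ek-(k+i+1))=e(r-k)+k+i+1$, the desired contradiction. Your reading of the role of the hypothesis $\binom{k+i+1}{k}\ge e$ and your final union computation are correct; it is the mechanism for producing the configuration that has to change.
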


It is easy to see that our result improves (\ref{asymtotic}) when $r,e,k,i$ satisfy $r\ge k+i+1\ge2$, $\binom{k+i+1}{k}\ge e$ and $\lf\log_2 e\rf\ge i+2$.
For example, taking $i=1$, Theorem \ref{generalupperbd} gives $f_r(n,10r-25,e)=o(n^3)$ for $r\ge 5$, while (\ref{asymtotic}) gives $f_r(n,10r-24,e)=o(n^3)$ (note that the conjectured upper bound is $f_r(n,10r-26,e)=o(n^3))$.


\begin{remark}
After submitting the paper, we learned from a reviewer that Theorem \ref{upperbd} can also be proved by a reduction to \eqref{added}. However, this connection was not explicitly stated in \cite{nagle-rodl-schacht}. Compared with \cite{nagle-rodl-schacht}, our main contribution is that we apply the full power of the hypergraph removal lemma to obtain Theorems \ref{upperbd} and \ref{generalupperbd} (we apply the hypergraph removal lemma for $K^k_{r}$, while \cite{nagle-rodl-schacht} applied the lemma for $K^k_{k+1}$). As far as we know, the latter does not follow directly from \eqref{added}.
\end{remark}

\subsection{Lower bounds from rainbow-cycle-free hypergraphs and additive number theory}

\noindent In the literature, sparse hypergraphs attaining the lower bound of Conjecture \ref{conjecture} are quite rare. Indeed, prior to our results, there was no construction that matches the conjectured lower bound when $\min \{e,r\}\ge 4$.

We break this barrier by showing that the lower bound of Conjecture \ref{conjecture} is true for $r\ge 3$, $k=2$ and $e=4,5,7,8$. A novel idea of our approach is that we find that rainbow-cycle-free linear hypergraphs (whose definition is given in Section \ref{3}) are good candidates for constructing sparse hypergraphs. Rainbow cycles are defined on linear $r$-partite $r$-graphs. Loosely speaking, they are Berge cycles \cite{hypercycle2,hypercycle1} with an additional property: the vertices of the cycle are located in distinct vertex parts of the $r$-partite hypergraph. 

We prove the following theorem.

\begin{theorem}\label{main}
  Let $r\ge 3$ be a fixed integer and $\ma{H}$ be a linear $r$-partite $r$-graph. Assume that $\ma{H}$ contains no rainbow cycles of length three or four. Then 
  $\ma{H}$ is simultaneously $\ma{G}_r(3r-3,3)$-free, $\ma{G}_r(4r-5,4)$-free, $\ma{G}_r(5r-7,5)$-free, $\ma{G}_r(7r-11,7)$-free and $\ma{G}_r(8r-13,8)$-free.
\end{theorem}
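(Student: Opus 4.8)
The plan is to prove the contrapositive in the appropriate form: if $\ma{H}$ is an $r$-uniform $r$-partite linear hypergraph that \emph{does} contain $e$ edges spanning at most $e(r-2)+2$ vertices, then those $e$ edges must contain a rainbow cycle of length three or four. So fix $e$ edges $A_1,\dots,A_e$ with $|A_1\cup\cdots\cup A_e|\le e(r-2)+2$. First I would set up the standard ``intersection graph'' bookkeeping for a linear hypergraph: since $\ma{H}$ is linear, any two edges meet in at most one vertex, so by inclusion--exclusion $|A_1\cup\cdots\cup A_e|\ge er-\binom{e}{2}$ if all pairs met, but more usefully, building the union edge by edge, the $i$-th edge $A_i$ contributes at least $r-(i-1)$ new vertices unless it shares vertices with several previously chosen edges; each shared vertex ``saves'' one vertex in the union. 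Define an auxiliary multigraph $\Gamma$ on vertex set $\{1,\dots,e\}$ (the edges of the configuration), putting an edge $ij$ for each vertex of $\ma{H}$ lying in both $A_i$ and $A_j$; linearity guarantees $\Gamma$ is simple. A counting identity gives $|A_1\cup\cdots\cup A_e| \ge er - |E(\Gamma)|$ — actually one must be careful when three or more edges pass through a common vertex, but in an $r$-partite linear hypergraph a vertex lies in one part, and I would track ``how many vertices are saved'' precisely; the upshot is that the hypothesis $|A_1\cup\cdots\cup A_e|\le e(r-2)+2$ forces $\Gamma$ to have at least $2e-2$ edges (for $e=4$: $\ge 6$; $e=5$: $\ge 8$; $e=7$: $\ge 12$; $e=8$: $\ge 14$), i.e.\ more edges than a forest on $e$ vertices, and in fact enough excess to force short cycles.

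Next I would translate a cycle in $\Gamma$ back into a combinatorial cycle in $\ma{H}$. A cycle $i_1 i_2 \cdots i_\ell i_1$ in $\Gamma$ corresponds to edges $A_{i_1},\dots,A_{i_\ell}$ and ``turning vertices'' $x_1\in A_{i_1}\cap A_{i_2}$, $x_2\in A_{i_2}\cap A_{i_3}$, \dots, $x_\ell \in A_{i_\ell}\cap A_{i_1}$; this is precisely a Berge cycle of length $\ell$. To upgrade it to a \emph{rainbow} cycle I must ensure the turning vertices lie in pairwise distinct parts of the $r$-partition — this is exactly the extra condition in the definition referenced in Section 3. Here the key observation is that consecutive turning vertices $x_{j-1}, x_j$ both lie in $A_{i_j}$, which is a transversal of the $r$-partition, so they are automatically in distinct parts; the only way rainbowness can fail is if two \emph{non-consecutive} turning vertices share a part. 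For a $3$-cycle this cannot happen (all three pairs are consecutive), so any triangle in $\Gamma$ yields a rainbow cycle of length $3$. For a $4$-cycle the only obstruction is $x_1$ and $x_3$ (resp.\ $x_2$ and $x_4$) lying in the same part; I would argue that if this degenerate situation occurs one can reroute — either the four edges genuinely span far fewer than $4(r-2)+2$ vertices and one extracts a triangle in $\Gamma$ among a sub-collection, or a chord of the $4$-cycle exists giving a shorter cycle. So the task reduces to: show $\Gamma$ contains a triangle, or a chordless $4$-cycle whose turning vertices avoid repeated parts.

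The heart of the argument is therefore the extremal/graph-theoretic step: a simple graph $\Gamma$ on $e$ vertices with at least $2e-2$ edges must contain a cycle of length $3$ or $4$. For $e=4$: $6 = \binom{4}{2}$ edges means $\Gamma=K_4$, which has triangles. For $e=5$: $8$ edges on $5$ vertices — the Petersen-type extremal graph $C_5$ has only $5$ edges and girth $5$, but $8 > \mathrm{ex}(5;\{C_3,C_4\})$, so a short cycle is forced. For $e=7$: $12$ edges on $7$ vertices, versus $\mathrm{ex}(7;\{C_3,C_4\})$ which is small (the incidence graph of the Fano plane is bipartite on $14$ vertices, not $7$); $12$ edges is well above the threshold. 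For $e=8$: $14$ edges on $8$ vertices, again above $\mathrm{ex}(8;\{C_3,C_4\})$. In each case I would either cite the Kővári–Sós–Turán / Reiman bound $\mathrm{ex}(m;\{C_3,C_4\}) \le \tfrac12(1+\sqrt{4m-3})\cdot \tfrac{m}{2}$-type estimate, or simply do the tiny finite check by hand. The reason $e=6$ is \emph{absent} from the theorem is instructive: with $e=6$ one only forces $\ge 10$ edges on $6$ vertices, and $K_{3,3}$ has $9$ edges — close enough that the short-cycle conclusion is not guaranteed, which is presumably why $f_r(n,6r-9,6)$ is not handled here.

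The main obstacle I anticipate is \emph{not} the graph-theoretic lemma but the precise accounting in the first paragraph: getting the inequality $|A_1\cup\cdots\cup A_e|\ge er - |E(\Gamma)|$ exactly right when several edges share a vertex, and then the rerouting argument for the $4$-cycle case in paragraph two — making sure that a ``bad'' (non-rainbow) short Berge cycle can always be replaced by a genuine rainbow cycle of length $3$ or $4$ without losing the span bound. I would handle the multi-edge-through-a-vertex subtlety by noting that in a linear $r$-partite hypergraph, if $t$ of the $A_i$'s pass through a common vertex $x$, they contribute a clique on $t$ vertices to $\Gamma$ but $x$ is a single vertex of $\ma{H}$, so such configurations only help (they create triangles in $\Gamma$ directly when $t\ge 3$), reducing to the generic case where $\Gamma$ is the union of matchings of shared vertices. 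That should make the whole chain go through; the numerology $e=4,5,7,8$ then drops out exactly as the values of $e$ for which $2e-2$ exceeds the relevant $\{C_3,C_4\}$-free extremal number on $e$ vertices.
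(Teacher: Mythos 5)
Your reduction to the intersection graph $\Gamma$ is a reasonable start, but it has an off-by-one error and, more importantly, a genuine gap at the crucial step. First the small issue: being not $\ma{G}_r(e(r-2)+3,e)$-free means $e$ edges span at most $e(r-2)+3=er-(2e-3)$ vertices, so the counting identity $er-|A_1\cup\cdots\cup A_e|=\sum_x(\deg(x)-1)\le|E(\Gamma)|$ only forces $|E(\Gamma)|\ge 2e-3$, not $2e-2$. This still exceeds the girth-$5$ extremal numbers for $e=4,5,7,8$, so a $C_3$ or $C_4$ in $\Gamma$ is indeed forced (and your triangle-to-rainbow-$3$-cycle upgrade is sound, since any two of the three turning vertices lie in a common transversal edge). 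But your explanation of why $e=6$ is excluded is wrong: $K_{3,3}$ is not $\{C_3,C_4\}$-free, so the edge-count dichotomy you describe is not what separates $e=6$ from the other cases.

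The real obstruction, and the gap in your proof, is the step ``a $C_4$ in $\Gamma$ can be rerouted to a rainbow cycle.'' The paper's Table \ref{tab63not96} exhibits six edges on $6r-9$ vertices (for $r=3$) whose intersection graph is exactly $K_{3,3}$: it is saturated with Berge $4$-cycles, yet contains no rainbow $3$- or $4$-cycle, because opposite turning vertices always land in the same part. So non-rainbow chordless $4$-cycles in $\Gamma$ genuinely occur and cannot be rerouted in general. For $e=7,8$ your plan therefore stalls: the forced short cycle in $\Gamma$ may live entirely inside such a $G_{3\times 3}$ sub-configuration. The paper closes exactly this hole with substantial additional work: Theorem \ref{clafy6r-9} classifies the unique $6r-9$ configuration, Lemma \ref{int6r-9<2} shows a seventh edge meets it in at most one vertex, and Lemma \ref{inter4atmost1} (via the system of constraints $(x_{i_1}-i_2)(x_{i_2}-i_1)=0$ over all pairs) shows that four pairwise disjoint edges cannot each be fully met by two further edges without creating a rainbow $4$-cycle. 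Your proposal contains no substitute for these arguments, and the sentence ``one can reroute --- either the four edges span far fewer vertices or a chord exists'' is not a proof: in the chordless non-rainbow case nothing in your setup produces a contradiction. To repair the proposal you would need, at minimum, a structural analysis of which part-patterns of turning vertices are consistent with rainbow-$4$-cycle-freeness, which is essentially the content of the paper's Section 5.
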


The proofs for $e=3,4,5$ are relatively simple, while the proofs for $e=7,8$ are much more involved. The key ingredient (in the proofs for $e=7,8$) is that under the assumption of Theorem \ref{main}, if $\ma{H}$ is not
$\ma{G}_r(6r-9,6)$-free, then any six edges $A_1,\ldots,A_6$ of $\ma{H}$ with $|\cup_{i=1}^6 A_i|\le 6r-9$ must satisfy $|\cup_{i=1}^6 A_i|=6r-9$ and crucially, up to isomorphism they have only one possible configuration (see Theorem \ref{clafy6r-9} below).

According to Theorem \ref{main}, to construct sparse hypergraphs achieving the lower bound of the conjecture, it suffices to construct sufficiently large rainbow-cycle-free hypergraphs. Additive number theory is a useful tool for constructing hypergraphs with certain forbidden subhypergraphs, see, e.g. \cite{ipp,Furediconst,RS,shf}. The idea is to characterize the forbidden subhypergraphs by a couple of equations. Then, the existence of an appropriately defined solution-free set (a {\it solution-free set} is a set which contains no nontrivial solution to certain prescribed equations) will guarantee the existence of the desired hypergraph which contains no forbidden subhypergraphs.

Using this approach, we prove the following result.

\begin{theorem} \label{rainbow}
    For any fixed integer $r\ge3$ and sufficiently large $n$, 
    there exists a linear $r$-partite $r$-graph $\ma{H}$ on $n$ vertices with $|\ma{H}|>n^{2-o(1)}$, which contains no rainbow cycles of length three or four.
\end{theorem}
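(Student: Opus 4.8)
The plan is to exhibit $\mathcal{H}$ as an additive, Ruzsa--Szemer\'edi type construction driven by a sum-free set. Fix $r$ and distinct ``slopes'' $\lambda_1,\dots,\lambda_r$ (e.g. $\lambda_j=j$; a more convenient choice may be forced later). Let $M$ be a large prime, let the parts be $V_1,\dots,V_r$, each a copy of $\mathbb{Z}_M$, and let $B\subseteq\mathbb{Z}_M$ be a set supported on a short interval, so that none of the bounded-coefficient identities below wrap around mod $M$. For each pair $(b,c)\in B\times B$ put an edge $e_{b,c}$ whose vertex in $V_j$ is $b+\lambda_j c$. Then $\mathcal{H}$ is $r$-uniform and $r$-partite by construction, and it is linear: if $e_{b,c}$ and $e_{b',c'}$ shared vertices in two parts $j\ne k$ then $b+\lambda_j c=b'+\lambda_j c'$ and $b+\lambda_k c=b'+\lambda_k c'$, and subtracting forces $c=c'$ and hence $b=b'$. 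Since $(b,c)\mapsto e_{b,c}$ is injective, $|\mathcal{H}|=|B|^2$, while $v(\mathcal{H})=O_r(M)$; so it suffices to produce $B$ with $|B|=M^{1-o(1)}$ for which $\mathcal{H}$ has no rainbow cycle of length $3$ or $4$.

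Next I would translate rainbow cycles into equations in $B$. If $e_{b_1,c_1},\dots,e_{b_\ell,c_\ell}$ form a rainbow $\ell$-cycle whose turning vertex between consecutive edges $e_{b_i,c_i}$ and $e_{b_{i+1},c_{i+1}}$ lies in part $p_i$ (the $p_i$ pairwise distinct, indices mod $\ell$), then the turning-vertex identities are $b_i-b_{i+1}=\lambda_{p_i}(c_{i+1}-c_i)$, and one checks that the configuration is a genuine Berge cycle iff $c_i\ne c_{i+1}$ for every $i$. Adding the $\ell$ identities kills the $b_i$ and leaves a translation-invariant equation purely in the $c_i$. For $\ell=3$ it reads $(\lambda_{p_3}-\lambda_{p_1})c_1+(\lambda_{p_1}-\lambda_{p_2})c_2+(\lambda_{p_2}-\lambda_{p_3})c_3=0$, a \emph{genuinely three-variable} equation whose nonzero coefficients are bounded in terms of $r$; a Behrend-type $B$ admits only its trivial solution $c_1=c_2=c_3$, which forces $b_1=b_2=b_3$ and collapses the ``cycle'' to a single repeated edge. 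For $\ell=4$ the sum yields a four-variable translation-invariant equation $\sum_{i=1}^4\mu_i c_i=0$ with $\mu_1=\lambda_{p_4}-\lambda_{p_1}$, and so on, which \emph{no} dense set avoids on its own; the extra leverage is that the shift parameters are also pinned down inside $B$, namely $b_1,\ b_1-\lambda_{p_1}(c_2-c_1),\ b_1-\lambda_{p_1}(c_2-c_1)-\lambda_{p_2}(c_3-c_2)\in B$. So I would enumerate the finitely many shapes of a rainbow $4$-cycle (choices of $p_1,\dots,p_4$ up to cyclic and reflective symmetry) and read off a finite list $\mathcal{E}$ of linear systems, in at most eight $B$-variables, whose non-degenerate solutions --- those with $c_i\ne c_{i+1}$ for all $i$ --- must be forbidden.

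The remaining task is to construct $B$ with $|B|=M^{1-o(1)}$ containing no non-degenerate solution of any system in $\mathcal{E}$; this, rather than the passage from hypergraphs to equations, is where I expect the real difficulty to lie. The natural starting point is a Behrend-type sphere set (integers whose base-$q$ digit vectors lie on a fixed sphere inside a box $[0,q')^d$, with $q'\ll q$ so that none of the bounded-coefficient relations in $\mathcal{E}$ produces a carry), which already disposes of all the three-variable $\ell=3$ equations and gives the usual density $M/e^{O(\sqrt{\log M})}=M^{1-o(1)}$. But because a single four-variable translation-invariant equation is unavoidable (the Sidon/Behrend barrier), the $\ell=4$ systems can only be killed through the \emph{combination} of the four-variable $c$-equation with the $b$-membership constraints, and the crux is to check that this combination is genuinely over-determined --- so that a suitably tailored sum-free set (possibly a higher-dimensional or otherwise refined variant of the sphere construction, together with a judicious choice of the slopes $\lambda_j$) admits only degenerate solutions, uniformly over the whole list $\mathcal{E}$. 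Once such a $B$ is in hand, $\mathcal{H}$ is $r$-uniform, $r$-partite, linear, and free of rainbow cycles of lengths three and four, with $|\mathcal{H}|=|B|^2=M^{2-o(1)}=v(\mathcal{H})^{2-o(1)}$; a routine argument adding isolated vertices (permitted by the convention in the excerpt) then yields the statement for every sufficiently large $n$.
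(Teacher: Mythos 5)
Your construction and the reduction to equations match the paper's: the paper also takes edges of the form $(y+b_1m,\ldots,y+b_rm)$ and shows that a rainbow $\ell$-cycle forces $(b_{j_2}-b_{j_1})m_1+\cdots+(b_{j_1}-b_{j_\ell})m_\ell=0$ with the intercepts eliminated, and the $\ell=3$ case is indeed dispatched by a Behrend set. But your proof has a genuine gap exactly where you flag ``the crux'': you never construct a set avoiding the nondegenerate solutions of the $\ell=4$ systems, you only describe what such a set would have to do. A plan that ends with ``check that this combination is genuinely over-determined'' for an unspecified ``suitably tailored'' set is not a proof of the theorem.

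Moreover, the premise that drives you toward the intercept-membership constraints --- that a four-variable translation-invariant equation is never avoidable by a dense set, so the $c$-equation alone cannot suffice --- is what the paper refutes, and refuting it is the actual content of the proof. The paper lets the intercept $y$ range over all of $[n]$ (so there are no $b$-membership constraints to exploit at all) and instead chooses the slope set $R=\{b_1,\ldots,b_r\}$ with doubly-exponentially growing elements, $b_{i+1}=2^{(\log b_i)^2}$. This makes the coefficients of every $4$-cycle equation extremely lopsided, and after classifying the equations into four types, each type is killed by a dense set: Types 1--2 (all positive coefficients on one side) by the standard Behrend construction (Lemma \ref{4-sum-free}), and the genuinely two-vs-two Types 3--4, of the form $a_1x+a_4y=a_2u+a_3v$ with $a_1,a_2,a_4-a_3=o(a_3^{\epsilon})$, by a base-$(a_3+1)$ digit construction whose digit set avoids an auxiliary equation (Lemma \ref{sumfree}); a random-shift argument (Lemma \ref{prmethod}) then intersects these into a single $R_4$-sum-free set of size $n^{1-o(1)}$. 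Whether your alternative --- exploiting $b\in B$ together with the $c$-equation --- could be made to work is unclear, but as written it is a conjecture, not an argument, and it also costs you the clean product structure $|\mathcal{H}|=n|M|$ that the paper uses. To close the gap you would need to either prove your over-determinacy claim for a concrete $B$ and concrete slopes, or adopt the paper's choice of rapidly growing slopes and its Lemmas \ref{4-sum-free}--\ref{prmethod}.
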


The following result is a direct consequence of Theorems \ref{main} and \ref{rainbow}.

\begin{proposition}\label{4578}
$f_r(n,er-2e+3,e)>n^{2-o(1)}$ holds for all fixed integers $r\ge 3$ and $e=4,5,7,8$.
\end{proposition}


\subsection{Linear 3-graphs containing neither grids nor triangles}

\noindent In this subsection we introduce an extremal hypergraph problem of F\"uredi and Ruszink\'o \cite{Furediconst} which is closely related to the sparse hypergraphs.

An $r$-graph is called an {\it $r\times r$ grid} if it is isomorphic to a pattern of $r$ horizontal and $r$ vertical lines, that is, a family of $2r$ distinct edges $\{A_1,\ldots,A_r,B_1,\ldots,B_r\}$ such that $A_i\cap A_j=B_i\cap B_j=\emptyset$ for all $1\le i<j\le r$ and $|A_i\cap B_j|=1$ for all $1\le i,j\le r$. A family of three distinct edges $C_1,C_2,C_3$ form a {\it triangle} if $|C_1\cap C_2|=|C_1\cap C_3|=|C_2\cap C_3|=1$, and the three singletons are pairwise distinct. Let $G_{r\times r}$ and $T_3$ denote $r\times r$ grids and triangles, respectively. Let $I_{\ge 2}$ denote the class of $r$-graphs composed of two edges intersecting in at least two common vertices. Clearly an $r$-graph is linear if only if it is $I_{\ge 2}$-free.

F\"uredi and Ruszink\'o \cite{Furediconst} considered the Tur\'an number of linear $r$-graphs which contain neither grids nor triangles. They showed that for any fixed integer $r\ge 4$,
\begin{equation*}
    n^{2-o(1)}<ex_r(n,\{I_{\ge 2},T_3,G_{r\times r}\})=o(n^2).
\end{equation*}

\noindent However, for $r=3$ they can only prove
\begin{equation}\label{r=3}
    n^{1.6-o(1)}<ex_3(n,\{I_{\ge 2},T_3,G_{3\times 3}\})=o(n^2).
\end{equation}
The following two facts are easy to verify by definition: $(a)$ a linear 3-graph is $\ma{G}_3(6,3)$-free if and only if it is $T_3$-free; $(b)$ if a 3-graph is $\ma{G}_3(9,6)$-free, then it must be $G_{3\times 3}$-free.

We show that solution-free sets defined on a carefully chosen finite vector space
can be used to construct linear 3-graphs which are simultaneously $\ma{G}_3(6,3)$-free and $\ma{G}_3(9,6)$-free, thereby giving a new lower bound for \eqref{r=3}. More precisely, let $\mathbb{F}_7$ be the finite field of seven elements and $r(\mathbb{F}_7^n)$ be the maximum size of  a subset of $\mathbb{F}_7^n$ that contains no three
distinct elements $m_1,m_2,m_3$ with $m_1+2m_2=3m_3$.
We prove the following theorem.

 \begin{theorem}\label{96free}
There exists a linear 3-partite 3-graph with $3\cdot 7^{n}$ vertices and $r(\mathbb{F}_7^n)\cdot 7^n$ edges which is simultaneously $\ma{G}_3(6,3)$-free and $\ma{G}_3(9,6)$-free, thereby implying that
   $ex_3(3\cdot 7^{n},\{I_{\ge 2},T_3,G_{3\times 3}\})\ge r(\mathbb{F}_7^n)\cdot 7^n$.
 \end{theorem}

Noting that Lin and Wolf \cite{LinWolf} (see Theorem 1 of \cite{LinWolf}) proved that 
$r(\mathbb{F}_7^n)=\Omega(7^{2n/3})$, we immediately have the following proposition.

\begin{proposition}
$ex_3(n,\{I_{\ge 2},T_3,G_{3\times 3}\})=\Omega(n^{1.6})$ as $n\rightarrow\infty$.
\end{proposition}

It is clear that our new lower bound slightly improves that of \eqref{r=3}. However, it is plausible that one could have $ex_3(n,\{I_{\ge 2},T_3,G_{3\times3}\})>n^{2-o(1)}$.
Unfortunately, Theorem \ref{96free} can never be used to prove that lower bound, since a recent breakthrough of Ellenberg and Gijswijt \cite{ellenberg2016large} showed that $r(\mathbb{F}_7^n)<c^n$ for some explicit constant $c<7$.

Lastly, it follows from Theorem \ref{96free} and the lower bound on $r(\mathbb{F}_7^n)$ that $f_3(n,9,6)>n^{1.6}$. However, this bound is inferior to the random construction \eqref{randombound} giving that $f_3(n,9,6)=\Omega(n^{1.8})$. 
We remark that one could show $f_3(n,9,6)>n^{2-o(1)}$ if a question of Ruzsa \cite{ruzsa} on the size of certain solution-free set (see Section \ref{new-section-1} below for the details) can be answered affirmatively.


\subsection{A general upper bound for $f_r(n,v,e)$}
\noindent In this subsection we introduce a general upper bound for $f_r(n,v,e)$, as stated below. 

\begin{theorem}\label{generalF}
    For fixed integers $r\ge 2,e\ge 2,v\ge r+1$, write $er-v=p(e-1)+q$, where $1\le q\le e-1$. Then it holds that
    $$f_r(n,v,e)\le q\binom{n}{p+1}/\binom{r}{p+1}+(e-1-q)\binom{n}{p}/\binom{r}{p}.$$
    \noindent 
    In other words,
    $$f_r(n,v,e)\le q\binom{n}{\lc\fr{er-v}{e-1}\rc}/\binom{r}{\lc\fr{er-v}{e-1}\rc}+(e-1-q)\binom{n}{\lf\fr{er-v}{e-1}\rf}/\binom{r}{\lf\fr{er-v}{e-1}\rf}.$$
\end{theorem}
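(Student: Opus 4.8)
The plan is to establish the first displayed bound by induction on $r$ (equivalently, since $v$ and $e$ are fixed and the quotient $p$ drops by exactly one each time $r$ does, by induction on $p$), with base case $p=0$; the second, floor/ceiling form then follows at once, because $1\le q\le e-1$ forces $(er-v)/(e-1)=p+q/(e-1)\in(p,p+1]$, so $\lceil(er-v)/(e-1)\rceil=p+1$ always, while $\lfloor(er-v)/(e-1)\rfloor=p$ unless $q=e-1$ — and in that exceptional case the coefficient $e-1-q$ vanishes, so the ambiguous term disappears.

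For the inductive step I would use the link reduction. Let $\ma{H}$ be a $\ma{G}_r(v,e)$-free $r$-uniform hypergraph on $n$ vertices, and for each vertex $x$ set $\ma{H}_x=\{A\setminus\{x\}:x\in A\in\ma{H}\}$, an $(r-1)$-uniform hypergraph on at most $n-1$ vertices. The crucial point is that $\ma{H}_x$ is $\ma{G}_{r-1}(v-1,e)$-free: if $e$ distinct members of $\ma{H}_x$ had union of size at most $v-1$, then adjoining $x$ to each of them would produce $e$ distinct edges of $\ma{H}$ whose union has size at most $v$. Since $\sum_x|\ma{H}_x|=r|\ma{H}|$, this gives $f_r(n,v,e)\le\frac{n}{r}f_{r-1}(n-1,v-1,e)$. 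A direct check shows that the reduced triple $(r-1,v-1,e)$ carries parameters $(p-1,q)$ — indeed $(v-1)+(p-1)(e-1)+q=er-e=(r-1)e$, and the constraints $0\le p-1\le r-2$ and $1\le q\le e-1$ persist — so the induction hypothesis applies, and the elementary identity $\frac{n}{r}\cdot\frac{\binom{n-1}{t}}{\binom{r-1}{t}}=\frac{\binom{n}{t+1}}{\binom{r}{t+1}}$ (a rewriting of $\binom{m}{t+1}=\frac{m}{t+1}\binom{m-1}{t}$, used with $t=p-1$ and with $t=p$) converts the bound for $(r-1,v-1,e)$ into exactly the claimed bound for $(r,v,e)$.

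The base case $p=0$ is the only place that needs an idea, and I expect it to be the main (if still modest) obstacle, since a naive degree count yields only $|\ma{H}|\le(e-1)n/r$ whereas we need $|\ma{H}|\le qn/r+(e-1-q)$. Here $v=er-q$ with $1\le q\le e-1$, and the point is that a single high-degree vertex already forces the forbidden configuration: if some vertex lies in at least $q+1$ edges of $\ma{H}$, the union of $q+1$ of them has at most $(q+1)r-q$ vertices, so padding it with any $e-q-1$ further edges (available as soon as $|\ma{H}|\ge e$) yields $e$ edges spanning at most $er-q=v$ vertices, contradicting $\ma{G}_r(v,e)$-freeness. Hence either $|\ma{H}|\le e-1$, or every vertex has degree at most $q$ and $r|\ma{H}|=\sum_x d(x)\le qn$; in either case $|\ma{H}|\le qn/r+(e-1-q)$ (using $n\ge r$), as needed.

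The rest is routine bookkeeping: confirming that the chain of reduced instances $(r-j,v-j,e)$, $j=0,\ldots,p$, stays within the hypotheses all the way down to $p=0$; disposing of the degenerate ranges ($n<r$, or $v\notin[r,er-1]$) where the statement is vacuous or immediate; and checking the binomial identity above. Beyond getting these constants to match — which the computations already confirm — I do not anticipate any genuine difficulty.
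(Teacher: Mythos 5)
Your proof is correct, but it takes a genuinely different route from the paper's. The paper proves an ``additive law'' $T_r(n,R,e)\le T_r(n,R-l,e-1)+\binom{n}{l}/\binom{r}{l}$ (where $R=er-v$) by extracting a maximal subfamily $\ma{F}\s\ma{H}$ in which no two edges share $l$ or more vertices -- so $|\ma{F}|\le\binom{n}{l}/\binom{r}{l}$ by counting $l$-subsets -- and observing that $\ma{H}\setminus\ma{F}$ is $\ma{G}_r(v-r+l,e-1)$-free because any violating $(e-1)$-tuple could be completed by a member of $\ma{F}$ meeting it in $\ge l$ vertices; iterating $e-1$ times with $l=p+1$ for $q$ steps and $l=p$ for the remaining $e-1-q$ steps, down to the trivial case $e=1$, yields the theorem. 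You instead keep $e$ fixed and induct on $r$ via the link reduction $f_r(n,v,e)\le\frac{n}{r}f_{r-1}(n-1,v-1,e)$, checking that the pair $(p,q)$ becomes $(p-1,q)$, and you supply a separate base case at $p=0$ by a maximum-degree argument. Both arguments are sound and elementary; the trade-off is that the paper's recursion is more flexible (the additive law is a standalone tool, usable with any mix of values of $l$, and its base case $e=1$ is vacuous), whereas your link recursion is arguably the more standard device but shifts the real combinatorial content into the $p=0$ base case, where your degree bound $\deg(x)\le q$ (else $q+1$ edges through $x$ plus $e-q-1$ arbitrary edges span at most $er-q=v$ vertices) correctly produces the non-obvious constant $qn/r+(e-1-q)$. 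Your handling of the floor/ceiling reformulation and of the degenerate parameter ranges is also fine and is no less careful than the paper's own treatment.
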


To the best of our knowledge, the theorem above provides the first general upper bound for $f_r(n,v,e)$. 
Unfortunately, it contributes nothing to the upper bound of  Conjecture \ref{conjecture}. 
Indeed, letting $v=er-(e-1)k+1$, the theorem above only gives that $$f_r(n,er-(e-1)k+1,e)\le(e-2)\binom{n}{k}/\binom{r}{k}+\binom{n}{k-1}/\binom{r}{k-1}.$$
However, Theorem \ref{generalF} may be useful for the study of other combinatorial objects, for example, combinatorial batch codes \cite{cbc}, 
perfect hash families \cite{shf}, 
and a related hypergraph extremal problem \cite{Glock2019,shangguan2020density}.


\subsection{Outline of the paper}
 \noindent The rest of this paper is organized as follows. In Section \ref{2} we will use the hypergraph removal lemma to prove Theorems \ref{upperbd} and \ref{generalupperbd}. The next three sections will be devoted to the constructions of sparse hypergraphs. In Section \ref{3}, we will introduce two important notions that are used in our constructions, namely, rainbow cycles and $R_L$-solution-free sets. We will also construct a sufficiently large $R_4$-solution-free set, 
 which is used in Section \ref{4} to construct rainbow-cycle-free hypergraphs. Then, in Section \ref{5} we will use rainbow-cycle-free hypergraphs to construct sparse hypergraphs that attain the lower bound of Conjecture \ref{conjecture} for fixed integers $r\ge 3$, $k=2$, and $e=4,5,7,8$.
 Theorems \ref{96free} and \ref{generalF} will be proved in Sections \ref{new-section-1} and \ref{new-section-2}, respectively.

\section{Sparse hypergraphs and the hypergraph removal lemma}\label{2}

\noindent The main task of this section is to prove that the right hand side of Conjecture 1 holds for all fixed integers $r,k,e$ satisfying $r\ge k+1\ge e\ge 3$. 

Using the hypergraph removal lemma described in Lemma \ref{hypergraph}, it is straightforward to deduce the following fact: for any given constant $\epsilon>0$, there exists some constant $\delta(\epsilon)>0$ so that if one must delete at least $\epsilon n^r$ edges to make an $r$-graph $\ma{H}$ on $n$ vertices $G$-free, then $\ma{H}$ must contain at least $\delta(\epsilon)n^{v(G)}$ copies of $G$.

\begin{proof}[\textbf{Proof of Theorem \ref{upperbd}}]
    Let $r,k,e$ be integers satisfying the assumption of the theorem and $\ma{H}$ be an $n$-vertex $r$-graph which is $\ma{G}_r(er-(e-1)k+1,e)$-free. Assume towards contradiction that $|\ma{H}|\ge\epsilon n^k$ holds for some constant $\epsilon>0$.

    We claim that for any $A_0\in\ma{H}$, there exist at most $e-2$ edges in $\ma{H}\setminus\{A_0\}$ such that each of them intersects $A_0$ in at least $k$ vertices. Indeed, if there exist $e-1$ distinct edges $A_1,\ldots,A_{e-1}$ with $|A_0\cap A_i|\ge k$ for every $1\le i\le e-1$, then 
    $$|\cup_{i=0}^{e-1}A_i|\le er-(e-1)k<er-(e-1)k+1,$$ which contradicts the fact that $\ma{H}$ is $\ma{G}_r(er-(e-1)k+1,e)$-free. Therefore, for any $A_0\in\ma{H}$, by removing at most $e-2$ edges which intersect $A_0$ in at least $k$ vertices, we can greedily construct a subhypergraph $\ma{H}'\s\ma{H}$ with $|\ma{H}'|\ge\fr{\epsilon}{e-1}n^k$ and $|A\cap B|\le k-1$ for all distinct $A,B\in\ma{H}'$. 

    To prove the theorem, we will make use of an auxiliary $k$-graph $\ma{H}^*$, as defined next.
    For each $r$-edge $A\in\ma{H}'$, we construct a hypergraph $K^k_r(A)$, which is the complete $k$-graph on the vertex set of $A$. In other words, $K^k_r(A)$ is formed by taking all of the $k$-element subsets of $A$. It is easy to see that there is a one-to-one correspondence between $A\in\ma{H}'$ and $K^k_r(A)\s\ma{H}^*$. The edges of $\ma{H}^*$ is formed by the union of the edges of all $K^k_r(A)$, that is, $$\ma{H}^*=\cup_{A\in\ma{H}'} K^k_r(A).$$
    As $|A\cap B|\le k-1$ for distinct $A,B\in\ma{H}'$, it follows that the $k$-graphs $K^k_r(A)$ and $K^k_r(B)$ are edge disjoint.

    To sum up, we construct a $k$-graph $\ma{H}^*$ which contains at least $|\ma{H}'|\ge\fr{\epsilon}{e-1}n^k$ edge disjoint copies of $K^k_r$. So one needs to delete at least $\fr{\epsilon}{e-1}n^k$ $k$-edges of $\ma{H}^*$ to make it $K^k_r$-free. It follows from the hypergraph removal lemma that $\ma{H}^*$ contains at least $\delta(\epsilon)n^r$ copies of $K^k_r$, where $\delta(\epsilon)$ is some constant given by the lemma.

    We proceed to show that $\ma{H}^*$ contains a copy of $K_r^k$, denoted as $\hat{K}_r^k$, which meets each $A\in\ma{H}'$ in at most $k$ vertices. It is sufficient to show that $\ma{H}^*$ contains at most $O(n^{r-1})$ (which is strictly less than $\delta(\epsilon)n^r$ for sufficiently large $n$) copies of $K_r^k$ which meet some $A\in\ma{H}'$ in at least $k+1$ vertices. Indeed, since there are at most $O(n^k)$ choices for $A\in\ma{H}'$ and at most $\binom{r}{k+1}n^{r-k-1}$ choices for $K^k_r$ which share at least $k+1$ common vertices with $A$, the above statement follows immediately.



    We conclude that there exists a $\hat{K}^{k}_r\s\ma{H}^*$ whose $\binom{r}{k}$ $k$-edges come from $\binom{r}{k}$ distinct $r$-edges of $\ma{H}'$. In particular, for $r\ge k+1\ge e$, we can take an arbitrary copy of
    $K^k_{k+1}$ that is contained in $\hat{K}^{k}_r$, and choose any $e$ of its edges, denoted by $B_1,\ldots,B_e$. Consider the $e$ edges $A_1,\ldots,A_e$ of $\ma{H}'$, which satisfy $B_i\s A_i$ for $1\le i\le e$. Note that the existence of these $e$ edges is guaranteed by the choice of $\hat{K}^{k}_r$. Observe that $B_1,\ldots,B_e$ are $e$ edges spanned by only $k+1$ vertices. Since $|A_i\setminus B_i|\le r-k$ for each $1\le i\le e$, it follows that
    $$|\cup_{i=1}^e A_i|=|\cup_{i=1}^e A_i\setminus B_i|+|\cup_{i=1}^e B_i|\le er-(e-1)k+1,$$

    \noindent which violates the assumption that $\ma{H}'$ is $\ma{G}_r(er-(e-1)k+1,e)$-free. Therefore, the theorem is proved by contradiction.
\end{proof}

\begin{proof}[\textbf{Proof of Theorem \ref{generalupperbd}}]
    Let $\ma{H}$ be an $n$-vertex $r$-graph which is $\ma{G}_r(er-(e-1)k+i+1,e)$-free. Assume towards contradiction that $|\ma{H}|\ge\epsilon n^k$ for some constant $\epsilon>0$. We follow the line of the proof of Theorem \ref{upperbd}. As in the last step of that proof, one can take an arbitrary copy of $K^k_{k+i+1}$ that is contained in $\hat{K}^{k}_r$, and choose any $e$ of its edges, denoted by $B_1,\ldots,B_e$. Consider the $e$ edges $A_1,\ldots,A_e$ of $\ma{H}'$, which satisfy $B_i\s A_i$ for $1\le i\le e$. Then, $B_1,\ldots,B_e$ are $e$ edges spanned by $k+i+1$ vertices. It follows that
    $$|\cup_{i=1}^e A_i|=|\cup_{i=1}^e A_i\setminus B_i|+|\cup_{i=1}^e B_i|\le er-(e-1)k+i+1,$$

    \noindent contradicting the assumption that $\ma{H}'$ is $\ma{G}_r(er-(e-1)k+i+1,e)$-free.
\end{proof}

\section{Rainbow cycles and solution-free sets}\label{3}





\subsection{Rainbow cycles}


  \noindent An $r$-graph $\ma{H}$ is {\it $r$-partite} if its vertex set $V(\ma{H})$ can be colored in $r$ colors in such a way that no edge of $\ma{H}$ contains two vertices of the same color. In such a coloring, the color classes of $V(\ma{H})$, i.e., the sets of all vertices of the same color, are called vertex parts of $\ma{H}$. We use $V_1,\ldots,V_r$ to denote the $r$ color classes of $V(\ma{H})$. Then $V(\ma{H})$ is a disjoint union of the $V_i$'s and $|A\cap V_i|=1$ for every $A\in\ma{H}$ and $1\le i\le r$.
  It will be convenient to use a table with $r$ rows to represent an $r$-partite $r$-graph, where the rows represent the vertex parts and the columns represent the edges. For an integer $1\le i\le r$ and a column $A\in\ma{H}$, the symbol in row $i$ and column $A$ is simply $A\cap V_i$.

  Next, we introduce the hypergraph cycles introduced by Berge \cite{hypercycle2,hypercycle1}. For $k\ge2$, a $k$-cycle in a hypergraph $\ma{H}$ is an alternating sequence of vertices and edges of the form $v_1,A_1,v_2,A_2,\ldots,v_k,A_k,v_1$ such that

  \begin{itemize}
      \item [(a)]$v_1,v_2,\ldots,v_k$ are distinct vertices of $\ma{H}$,
      \item [(b)]$A_1,A_2,\ldots,A_k$ are distinct edges of $\ma{H}$,
      \item [(c)]$v_i,v_{i+1}\in A_i$ for $1\le i\le k-1$ and $v_k,v_1\in A_k$.
  \end{itemize}

\noindent It is clear that $v_i\in A_{i-1}\cap A_{i}$ for $2\le i\le k$ and $v_1\in A_k\cap A_1$.

 Below we present the definition of rainbow cycles, which are originally introduced in \cite{shf}. 
 Let $\ma{H}$ be a linear $r$-partite $r$-graph. A $k$-cycle $v_1,A_1,v_2,A_2,\ldots,v_k,A_k,v_1$ is said to be {\it rainbow} if $v_1,\ldots,v_k$ are located in $k$ different parts of $V(\ma{H})$. It is easy to see that for $r$-partite hypergraphs, a rainbow $k$-cycle
  exists only if $k\le r$. A key ingredient of this paper is that we show in certain circumstances rainbow-cycle-free hypergraphs are also
  sparse.

 In this paper, we are mostly interested in rainbow cycles of lengths three and four. The lemma below connects hypergraphs with no rainbow 3-cycles to sparse hypergraphs.

  \begin{lemma}\label{noTim63}
    A linear $r$-partite $r$-graph $\ma{H}$ is $\ma{G}_r(3r-3,3)$-free if and only if it contains no rainbow 3-cycles.
  \end{lemma}

  \begin{proof}
    Observe that the ``only if'' part follows immediately from the definition. To prove the ``if'' part, it suffices to show that if three distinct edges $A_1,A_2,A_3\in\ma{H}$ satisfy $|A_1\cup A_2\cup A_3|\le 3r-3$, then they must also form a rainbow 3-cycle. As $\ma{H}$ is linear, by the inclusion-exclusion principle we have that
    \begin{equation*}
        \begin{aligned}
           3r-3\ge|\cup_{i=1}^3 A_i|=\sum_{i=1}^3 |A_i|-\sum_{1\le i<j\le 3}|A_i\cap A_j|+|\cap_{i=1}^3 A_i|\ge 3r-3+|\cap_{i=1}^3 A_i|,
        \end{aligned}
    \end{equation*}

    \noindent which implies that $|\cup_{i=1}^3 A_i|=3r-3$, $|\cap_{i=1}^3 A_i|=0$ and $|A_i\cap A_j|=1$ for all $1\le i<j\le 3$. Assume without loss of generality that $A_1\cap A_2=\{a\}$, $A_2\cap A_3=\{b\}$ and $A_3\cap A_1=\{c\}$. Since $a,b,c$ are distinct, it is easy to verify by the $r$-partiteness of $\ma{H}$ that they must belong to three distinct vertex parts. Assume without loss of generality that $a\in V_1$, $b\in V_2$, and $c\in V_3$. Then, we will have a rainbow 3-cycle $c,A_1,a,A_2,b,A_3,c$, as depicted by Table \ref{rain3-cycle} below, which is a  contradiction.
    \end{proof}

    \begin{table}[h]
    \begin{center}
    \begin{tabular}{|c|c|c|c|}
      \hline
       & $A_1$ & $A_2$ & $A_3$ \\\hline
      $V_1$ & $a$ & $a$ & \\\hline
      $V_2$ &  & $b$ & $b$  \\\hline
      $V_3$ & $c$ &  & $c$  \\\hline
    \end{tabular}
        \end{center}
        \caption{A rainbow 3-cycle}\label{rain3-cycle}
    \end{table}

  If a hypergraph $\{A_1,A_2,A_3,A_4\}$ forms a rainbow 4-cycle $d,A_1,a,A_2,b,A_3,c,A_4,d$, then we may represent it by Table \ref{rain4-cycle} below.

   \begin{table}[h]
  \begin{center}
    \begin{tabular}{|c|c|c|c|c|}
      \hline
       & $A_1$ & $A_2$ & $A_3$ & $A_4$ \\\hline
      $V_1$ & $a$ & $a$ & &\\\hline
      $V_2$ &  & $b$ & $b$ & \\\hline
      $V_3$ &  &  & $c$ & $c$\\\hline
      $V_4$ & $d$ &  &  & $d$\\\hline
    \end{tabular}
  \end{center}
   \caption{A rainbow 4-cycle}\label{rain4-cycle}
    \end{table}

  \subsection{Solution-free sets}

\noindent We call a linear equation $\sum_{i=1}^s a_im_i=0$ with integer coefficients $a_1,\ldots,a_s$ in the unknowns $m_i$ {\it homogeneous} if $\sum_{i=1}^s a_i=0$.
We say that $M\s[n]$ has no nontrivial solution to the equation above, if whenever
$m_i\in M$ and $\sum_{i=1}^s a_im_i=0$, it follows that all of the $m_i$'s are equal.
Note that this definition of a nontrivial solution is a simplification of the original one of Ruzsa \cite{ruzsa}.
Let $R=\{b_1,\ldots,b_r\}$ be a set of $r$ distinct nonnegative integers.
Given an integer $3\le L\le r$, a set $M\s [n]$ is said to be {\it $R_L$-solution-free} if for any integer $l\in\{3,\ldots,L\}$ and all tuples $(b_{j_1},\ldots,b_{j_l})$ of $l$ distinct elements of $R$, the equation

\begin{equation*}\label{equation-new}
    (b_{j_2}-b_{j_1})m_1+(b_{j_3}-b_{j_2})m_2+\cdots+(b_{j_l}-b_{j_{l-1}})m_{l-1}+(b_{j_1}-b_{j_l})m_l=0
\end{equation*}
\noindent has no solution in $M$ except for the trivial one $m_1=\cdots=m_l$.

\begin{remark}
The notion of $R_L$-solution-free set is a generalization of the solution-free set, which has been studied extensively in the literature (see \cite{ruzsa} for a detailed introduction). Such a generalization was first proposed in \cite{shf} for $r=4$ and $L=4$.
\end{remark}

The main objective of this subsection is to prove Theorem \ref{R4sumfree}, which says that one can actually construct an $R_4$-sum-free set $M\s[n]$ with cardinality $|M|>n^{1-o(1)}$. We will need several lemmas before presenting the proof. 

\begin{lemma}\label{nepsilon}
For any fixed constant $a\in(0,1)$, it holds that for sufficiently large $n,$ $2^{O(\log^a n)}=n^{o(1)}$.
\end{lemma}

\begin{proof}
    This lemma follows easily from the following computation. For an arbitrary small constant $\epsilon>0$, it holds that
    $$\lim_{n\rightarrow+\infty}\fr{2^{O(\log^a n)}}{n^{\epsilon}}=
    \lim_{n\rightarrow+\infty}\fr{2^{O(\log^a n)}}{2^{\epsilon\log n}}=
    \lim_{n\rightarrow+\infty}2^{O(\log^a n)-\epsilon\log n}=2^{-\Omega(\log n)}.$$
\end{proof}

The next lemma is a standard application of the Behrend-type construction \cite{Behrend} (see also \cite{ipp,ruzsa}).

  \begin{lemma}\label{4-solution-free}
    Let $l\ge 2$ be a fixed integer. Let $a_1,\ldots,a_l\in[n^{o(1)}]$ be $l$ integers (which are not necessarily fixed and can possibly be a function of $n$). Then, there exists a set $M\s [n]$ with $|M|\ge\fr{n}{2^{O\big(\sqrt{\log n\log\sum_{i=1}^l a_i}\big)}}$ which has no nontrivial solution to the equation
    \begin{equation}\label{needed}
      a_1m_1+\cdots+a_lm_l=(a_1+\cdots+a_l)m_{l+1}.
    \end{equation}
  \end{lemma}

   \begin{proof}
      Let $d$ be an integer that will be specified later and let $k=\lf\fr{\log n}{\log d}\rf$. It is easy to see that
      $$\fr{n}{d}=d^{\fr{\log n}{\log d}-1}<d^k\le d^{\fr{\log n}{\log d}}=n.$$
      Denote $D=\sum_{i=1}^l a_i$. We define the set $M\s[n]$ by
      $$M=\big\{\sum_{i=1}^k x_id^{i-1}:0\le x_i\le\lc\frac{d}{D}\rc-1\text{ and }\sum_{i=1}^k x_i^2=R\big\},$$
      where $R$ is an integer in $\{0,\ldots,k(\lc\frac{d}{D}\rc-1)^2\}$ and is chosen to maximize the size of $M$. By averaging, there exists an $R$ such that
      $$|M|>(\fr{d}{D})^k\cdot\fr{1}{k(\fr{d}{D})^2}>\fr{n}{d}\cdot\fr{1}{D^k}\cdot\fr{1}{k(\fr{d}{D})^2}=\fr{n}{kd^3D^{k-2}}.$$
      Set $d=\lf2^{\sqrt{\log n\log D}}\rf$.
      Then
      $$|M|>\fr{n}{\fr{\log n}{\log d}\cdot2^{3\sqrt{\log n\log D}}\cdot D^{\fr{\log n}{\log d}}}\ge\fr{n}{2^{O(\sqrt{\log n\log D})}},$$
      as needed.

    It suffices to show that $M$ contains no nontrivial solution to \eqref{needed}. For that purpose, let $\{m_j\in M:1\le j\le l+1\}$ be a solution to \eqref{needed}.
      By the definition of $M$ we can write $m_j=\sum_{i=1}^k x_{j,i}d^i$, where $0\le x_{j,i}\le \lc\fr{d}{D}\rc-1<\frac{d}{D}$.
      Then 
      $$\sum_{j=1}^l a_j\left(\sum_{i=1}^k x_{j,i}d^i\right)=D\sum_{i=1}^k x_{l+1,i}d^i,$$
      which implies that
      $$\sum_{i=1}^k\left(\sum_{j=1}^l a_jx_{j,i}\right)d^i=\sum_{i=1}^k Dx_{l+1,i}d^i.$$
      As $\sum_{j=1}^l a_jx_{j,i}<d$ for every $1\le i\le k$, it follows that $$\sum_{j=1}^l a_jx_{j,i}=Dx_{l+1,i}.$$
      Using the Cauchy--Schwarz inequality one can show that for $1\le i\le k$,
      $$D\left(\sum_{j=1}^l a_jx_{j,i}^2\right)=\left(\sum_{j=1}^l a_j\right)\left(\sum_{j=1}^l a_jx_{j,i}^2\right)\ge\left(\sum_{j=1}^l a_jx_{j,i}\right)^2=D^2x_{l+1,i}^2.$$
      which implies that
      \begin{equation}\label{add-1}
          \sum_{j=1}^l a_jx_{j,i}^2\ge Dx_{l+1,i}^2
      \end{equation}

      \noindent and the inequality holds when $x_{1,i}=\cdots=x_{l+1,i}$.
       On the other hand, since $\sum_{i=1}^k x_{j,i}^2=R$ for each $1\le j\le l+1$, it is straightforward to check that
       \begin{equation}\label{add-2}
          \sum_{i=1}^k\left(\sum_{j=1}^l a_jx_{j,i}^2\right)=\sum_{j=1}^la_j\left(\sum_{i=1}^kx_{j,i}^2\right)=\sum_{j=1}^la_jR=DR=D\sum_{i=1}^kx_{l+1,i}^2.
      \end{equation}

      \noindent Combining \eqref{add-1} and \eqref{add-2}, we conclude that for each $1\le i\le k$, we have that
      $x_{1,i}=\cdots=x_{l+1,i}$, which further implies that $m_1=\cdots=m_{l+1}$. Therefore, $M$ contains only trivial solutions to \eqref{needed}, as needed.
   \end{proof}

We will need three more technical lemmas. The lemma below can be viewed as a more involved application of the Behrend-type construction and it generalizes also the solution-free set constructed in \cite{ipp}.

   \begin{lemma}\label{sumfree}
    Let $a_1,a_2,a_3,a_4\in[n^{o(1)}]$ be four integers (which are not necessarily fixed and can possibly be a function of $n$) satisfying
    \begin{itemize}
      \item [\rm{(i)}] $a_1<a_2<a_3<a_4$,
      \item [\rm{(ii)}] $a_1+a_4=a_2+a_3,$
      \item [\rm{(iii)}] $a_1,a_2=a_3^{o(1)}$, $a_4=a_3+a_3^{o(1)}$,
      \item [\rm{(iv)}] there are two constants $0<a,b<1$ satisfying $\log a_2=O(\log^a a_3)$ and $\log a_3=\Omega(\log^b n)$.
    \end{itemize}
    Then there exists a set $M\s [n]$ with $|M|\ge\fr{n}{2^{O\big((\log n)^{1+\fr{b(a-1)}{2}}\big)}}>n^{1-o(1)}$ which contains no nontrivial solution to the equation
        \begin{equation}\label{needed-2}
  a_1x+a_4y=a_2u+a_3v.
    \end{equation}
   \end{lemma}

   \begin{remark}
        Observe that $0<1+\fr{b(a-1)}{2}<1$ as $0<a,b<1$. Therefore, by Lemma \ref{nepsilon} we have $2^{O\big((\log n)^{1+\fr{b(a-1)}{2}}\big)}=n^{o(1)}$ for sufficiently large $n$.
   \end{remark}

   \begin{proof}
        Let $\ma{B}\s\{1,\ldots,\lf\fr{a_3+1}{a_2}\rf\}$ be a set of integers which has no nontrivial solution to the auxiliary equation
        \begin{equation}\label{auxi}
        \begin{aligned}
            a_1x+(a_4-a_3-1)y+v=a_2u.
        \end{aligned}
    \end{equation}
    \noindent Observe that by (ii) and (iii) we have $a_1+(a_4-a_3-1)+1=a_2$ and $a_1,a_4-a_3,a_2\in[(\frac{a_3}{a_2})^{o(1)}]$. Then it follows by Lemma \ref{4-solution-free} that for sufficiently large $\frac{a_3}{a_2}$, there exists a subset $\ma{B}$ which satisfies

     \begin{equation}\label{add-3}
   |\ma{B}|\ge\fr{\fr{a_3+1}{a_2}}{2^{O\big(\sqrt{\log \fr{a_3+1}{a_2}\log a_2}\big)}}=\fr{a_3+1}{2^{\log a_2+O\big(\sqrt{\log \fr{a_3+1}{a_2}\log a_2}\big)}}>\fr{a_3}{2^{O\big((\log a_3)^{\fr{1+a}{2}}\big)}}>a_3^{1-o(1)},
    \end{equation}


    \noindent where the first, second and third inequalities follow from Lemma \ref{4-solution-free}, (iv) and Lemma \ref{nepsilon}, respectively. Let $M$ be the family of all integers in $[n]$ whose representations in base $a_3+1$ contain only digits belonging to $\ma{B}$, namely,
    $$M=\big\{m\in[n]:m=\sum_i u_i(a_3+1)^i,~u_i\in\ma{B}\big\}.$$
    It is not hard to see that

    \begin{equation}\label{add-4}
      |M|\ge|\ma{B}|^{\lf\log_{a_3+1}n\rf}=\Omega\big(|\ma{B}|^{\fr{\log n}{\log(a_3+1)}-1}\big)=\Omega\big(n^{\fr{\log|\ma{B}|}{\log(a_3+1)}-\fr{\log |\ma{B}|}{\log n}}\big).
    \end{equation}

    \noindent As $\fr{\log |\ma{B}|}{\log n}=o(1)$, substituting the second inequality of \eqref{add-3} into \eqref{add-4} one gets

    $$|M|\ge n^{\fr{\log a_3-O\big((\log a_3)^{\fr{1+a}{2}}\big)}{\log(a_3+1)}}\ge n^{1-O\big((\log a_3)^{\fr{a-1}{2}}\big)}=\fr{n}{2^{O\big((\log n)^{1+\fr{b(a-1)}{2}}\big)}},$$
    \noindent as needed, where the equality follows from (iv) and the fact that $n^{O(\log^{(a-1)/2} a_3)}=2^{O(\log n\log^{(a-1)/2} a_3)}$.

Therefore, to prove the lemma it remains to show that \eqref{needed-2} has no nontrivial solution in $M$.
    Assume towards contradiction that $x,y,u,v\in M$ form a nontrivial solution to the equation above. Let us represent them in base $a_3+1$ as
    $$x=\sum_i x_i(a_3+1)^i,~y=\sum_i y_i(a_3+1)^i,~u=\sum_i u_i(a_3+1)^i\text{ and }v=\sum_i v_i(a_3+1)^i.$$

    \noindent Since $x,y,u,v$ form a nontrivial solution, there must exist some integer $i$ such that $x_i,y_i,u_i,v_i$ are not all equal. Let $j$ be the least $i$ satisfying such a condition. Then, it is not hard to check that
    $$a_1x_j(a_3+1)^j+a_4y_j(a_3+1)^j\equiv a_2u_j(a_3+1)^j+a_3v_j(a_3+1)^j\pmod{(a_3+1)^{j+1}},$$

    \noindent which implies that $$a_1x_j+(a_4-a_3-1)y_j\equiv a_2u_j-v_j\pmod{a_3+1},$$

    \noindent or equivalently,
    $$a_1x_j+(a_4-a_3-1)y_j+v_j\equiv a_2u_j\pmod{a_3+1}.$$

    \noindent Recall that by definition we have that $x_j,y_j,u_j,v_j<\fr{a_3+1}{a_2}$. Then by the equation above,
    $$a_1x_j+(a_4-a_3-1)y_j+v_j=a_2u_j,$$

    \noindent which is a contradiction since $x_j,y_j,u_j,v_j$ are not all equal and $\ma{B}$ has no nontrivial solution to (\ref{auxi}).
   \end{proof}


   \begin{lemma}\label{shift}
    Let $s\ge 3$ be a fixed integer and $\sum_{i=1}^s a_im_i=0$ be a homogeneous linear equation with the unknowns $m_i$. If $M\s [n]$ has no nontrivial solution to this equation, then so does for any shift $(M+b)\cap[n]$, where $b\in\mathbb{Z}$ and $M+b:=\{m+b:m\in M\}$.
   \end{lemma}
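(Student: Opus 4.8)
The plan is to show that the defining property of $R_L$-sum-free sets (and more generally, of having no nontrivial solution to a fixed homogeneous linear equation) is invariant under integer shifts, provided the shifted elements still lie in $[n]$. The key observation is that homogeneity, i.e. $\sum_{i=1}^s a_i = 0$, is precisely what makes a constant shift disappear from the equation. So the hard part is essentially nil; the content is a one-line substitution, and the only thing to be careful about is the bookkeeping of ``trivial'' versus ``nontrivial'' solutions and the domain restriction $(M+b)\cap[n]$.

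First I would fix the homogeneous equation $\sum_{i=1}^s a_i m_i = 0$ with $\sum_{i=1}^s a_i = 0$, fix $M \subseteq [n]$ with no nontrivial solution to it, and fix $b \in \mathbb{Z}$. Let $m_1', \dots, m_s' \in (M+b)\cap[n]$ be any solution of $\sum_{i=1}^s a_i m_i' = 0$. Write $m_i' = m_i + b$ with $m_i \in M$ for each $i$ (such an $m_i$ exists by definition of $M+b$). Then I substitute:
\begin{equation*}
0 = \sum_{i=1}^s a_i m_i' = \sum_{i=1}^s a_i (m_i + b) = \sum_{i=1}^s a_i m_i + b\sum_{i=1}^s a_i = \sum_{i=1}^s a_i m_i,
\end{equation*}
using homogeneity in the last step. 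Hence $m_1, \dots, m_s \in M$ form a solution of the original equation, so by hypothesis all the $m_i$ are equal, and therefore all the $m_i' = m_i + b$ are equal as well. This shows $(M+b)\cap[n]$ has no nontrivial solution to $\sum_{i=1}^s a_i m_i = 0$, which is the claim.

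One small subtlety worth a remark in the write-up: one must check that the notion of ``no nontrivial solution'' used here is genuinely coordinate-wise equality of the unknowns, as defined earlier in the excerpt, so that ``all $m_i$ equal'' transfers cleanly through the bijection $m \mapsto m + b$. Since that bijection is injective, equality of the $m_i$ is equivalent to equality of the $m_i'$, so no information is lost. I would also note that the statement is used in applications to translate a sum-free set constructed around the origin into one sitting inside $[n]$ with a prescribed offset, which is why the intersection with $[n]$ (rather than all of $\mathbb{Z}$) appears; the argument above is insensitive to this because it only ever uses that the $m_i'$ satisfy the equation, never where they live. There is no real obstacle here — the proof is complete in the displayed computation above.
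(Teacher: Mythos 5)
Your proof is correct and is essentially the same argument as the paper's: write each element of the shifted solution as $m_i+b$, substitute, and use $\sum_i a_i=0$ to cancel the shift, reducing to a solution in $M$. The only cosmetic difference is that the paper phrases it as a proof by contradiction while you argue directly that every solution in $(M+b)\cap[n]$ is trivial.
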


   \begin{proof}
     Assume towards contradiction that there exists some $b\in\mathbb{Z}$ such that $(M+b)\cap[n]$ contains a nontrivial solution to the equation. Denote this nontrivial solution by $\{b_1,\ldots,b_s\}$, where for each $1\le i\le s$, $b_i=m_i+b$ and $m_i\in M$. Since $b_1,\ldots,b_s$ are not all equal, neither are $m_1,\ldots,m_s$. Therefore,
     $$0=\sum_{i=1}^s a_ib_i=\sum_{i=1}^s a_i(m_i+b)=\sum_{i=1}^s a_im_i+b\sum_{i=1}^s a_i=\sum_{i=1}^s a_im_i.$$
     \noindent We conclude that $\{m_1,\ldots,m_s\}\s M$ also forms a nontrivial solution, contradicting the assumption of the lemma.
   \end{proof}

   \begin{lemma}\label{prmethod}
       Let $0<a<1$ be a fixed constant and $t$ be a fixed positive integer. Let $\sum_{i=1}^s a_{ij}m_i=0$, $1\le j\le t$, be $t$ homogeneous linear equations with the unknowns $m_i$. If for every integer $1\le j\le t$, there exists a set $M_j\s[n]$ with  $|M_j|\ge\fr{n}{2^{O(\log^a n)}}$ which contains no nontrivial solution to the equation $\sum_{i=1}^s a_{ij}m_i=0$. Then, there exists a set $M\s[n]$ with $|M|\ge\fr{n}{2^{O(\log^a n)}}$ which contains no nontrivial solution to any of the $t$ equations $\sum_{i=1}^s a_{ij}m_i=0$, $1\le j\le t$.
   \end{lemma}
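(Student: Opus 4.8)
The plan is to prove this by induction on $t$, merging the sets one equation at a time by a shift–averaging trick, relying crucially on the shift-invariance of homogeneous equations established in Lemma~\ref{shift}. The base case $t=1$ is trivial: take $M=M_1$.

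For the inductive step, suppose we have already produced a set $M'\s[n]$ with $|M'|\ge \fr{n}{2^{\ma{O}(\log^a n)}}$ having no nontrivial solution to any of the first $t-1$ equations; we also have $M_t\s[n]$ with $|M_t|\ge \fr{n}{2^{\ma{O}(\log^a n)}}$ having no nontrivial solution to the $t$-th equation. The difficulty is that $M'\cap M_t$ could be tiny, so I would first translate $M'$ to force a large intersection. Counting pairs gives
$$\sum_{b\in\mathbb{Z}}\bigl|(M'+b)\cap M_t\bigr|=|M'|\cdot|M_t|,$$
and since $M',M_t\s[n]$ only the $2n-1$ values $b\in\{1-n,\ldots,n-1\}$ contribute a nonempty term. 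Hence there is some $b_0\in\mathbb{Z}$ with
$$\bigl|(M'+b_0)\cap M_t\bigr|\ \ge\ \fr{|M'|\cdot|M_t|}{2n}\ \ge\ \fr{n}{2^{\ma{O}(\log^a n)}}.$$

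Now set $M:=(M'+b_0)\cap M_t\s[n]$. Since $M\s M_t$, it inherits the absence of a nontrivial solution to the $t$-th equation (a subset of a set with no nontrivial solution clearly has none). Since $M\s(M'+b_0)\cap[n]$ and each of the first $t-1$ equations is homogeneous, Lemma~\ref{shift} applied to $M'$ shows $(M'+b_0)\cap[n]$, and therefore its subset $M$, has no nontrivial solution to any of them. Thus $M$ avoids all $t$ equations and has the required density, completing the induction.

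The only point needing care is the bookkeeping of the implied constant: one merging step turns an exponent $c\log^a n$ into roughly $(2c+1)\log^a n$ (product of two densities divided by $2n$, absorbing the $+1$ into $\log^a n$ for large $n$), so after the fixed number $t-1$ of steps the constant is still $\ma{O}(1)$ and the final bound $|M|\ge \fr{n}{2^{\ma{O}(\log^a n)}}$ holds with an absolute implied constant. I expect this constant-tracking to be the main (and essentially the only) obstacle; everything else is routine.
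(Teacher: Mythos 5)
Your proof is correct and follows essentially the same route as the paper: both hinge on Lemma~\ref{shift} together with an averaging argument over shifts to find translates whose intersection remains large. The paper executes this in one probabilistic step --- choosing $\mu_2,\dots,\mu_t\in\{-n,\dots,n\}$ independently at random and lower-bounding $E[|M_1\cap(M_2+\mu_2)\cap\cdots\cap(M_t+\mu_t)|]$ --- whereas you derandomize it into $t-1$ pigeonhole merges; the two are interchangeable, and your constant-tracking worry is indeed harmless since $t$ is fixed.
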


   \begin{proof}
    Choose $t-1$ integers $\mu_2,\ldots,\mu_t\in\{-n,\ldots,n\}$ uniformly and independently at random. It follows by Lemma \ref{shift} that $M=M_1\cap\big(\cap_{j=2}^t (M_j+\mu_j)\big)$ has no nontrivial solution to any of the $t$ equations. So it remains to prove the required lower bound on $|M|$. For that purpose, let us compute the probability that an arbitrary element $m\in M_1$ lies in the intersection $\cap_{j=2}^t (M_j+\mu_j)$. Observe that $-n\le m-m_j\le n$ holds for any $2\le j\le t$ and any $m_j\in M_j$. It follows that for any $m\in M_1$, 
    $$\Pr[m\in(M_j+\mu_j)]=\Pr[\exists~m_j\in M_j,~\text{s.t.}~\mu_j=m-m_j]=\fr{|M_j|}{2n+1}=2^{-O(\log^a n)}.$$

    \noindent Since $\{m\in M_j+\mu_j:2\le j\le t\}$ is a set of mutually independent events, 
    $$\Pr[m\in\cap_{j=2}^t (M_j+\mu_j)]=(2^{-O(\log^a n)})^(t-1)=2^{-O(\log^a n)}.$$

    \noindent By the linearity of expectation, 
    $${\rm E}[|M|]\ge|M_1|\cdot 2^{-O(\log^a n)}=\fr{n}{2^{O(\log^a n)}}.$$

    \noindent Consequently, there exist a choice of $t-1$ integers $\mu_2,\ldots,\mu_t\in\{-n,\ldots,n\}$ such that $M=M_1\cap\big(\cap_{j=2}^t (M_j+\mu_j)\big)$ is a set of at least $\fr{n}{2^{O(\log^a n)}}$ positive integers which has no nontrivial solution to any of the $t$ equations, as needed.
   \end{proof}

   The following theorem is the main result of this subsection.

   \begin{theorem}\label{R4sumfree}
    For every fixed integer $r\ge 4$, there exists an $r$-element set $R\s[n]$ and an $R_4$-solution-free set $M\s[n]$ with $|M|>n^{1-o(1)}$.
   \end{theorem}

   \begin{proof}
    In order to construct the required solution-free set, we typically choose a set $R=\{b_1,\ldots,b_r\}$ that satisfies $\log b_1=\lf(\log n)^{\frac{1}{2^r}}\rf$ and for $2\le i\le r$, $\log b_i=\lf\log^2 b_{i-1}\rf$. Then $\log^2 b_{i-1}-1<\log b_i\le\log^2 b_{i-1}$, which implies that $b_{i-1}\le 2^{\sqrt{\log b_i+1}}$ and $b_{i-1}=b_i^{o(1)}$ (by Lemma \ref{nepsilon}). It is also not hard to check that
    $$\log^{\frac{1}{2^{r-i+1}}}n-o(\log^{\frac{1}{2^{r-i+1}}}n)\le\log b_i\le\log^{2^{i-1}} b_1\le\log^{\frac{1}{2^{r-i+1}}} n,$$

    \noindent which implies that $\max\{b_i:1\le i\le r\}=b_r\le 2^{\sqrt{\log n}}$ and $b_r=n^{o(1)}$ (also by Lemma \ref{nepsilon}).



    It is routine to check by definition that a subset $M\s[n]$ is $R_4$-solution-free if and only if for any tuple $(b_{j_1},b_{j_2},b_{j_3},b_{j_4})$ of 4 distinct elements of $R$, where we assume without loss of generality that $b_{j_1}<b_{j_2}<b_{j_3}<b_{j_4}$, $M$ contains no nontrivial solution to any of the following four equations:


    \begin{equation*}\label{type1}
    \text{Type 1.}\qquad\qquad\qquad\qquad\quad      (b_{j_2}-b_{j_1})m_1+(b_{j_3}-b_{j_2})m_2=(b_{j_3}-b_{j_1})m_3,
    \end{equation*}

    \begin{equation*}\label{type2}
    \text{Type 2.}\qquad    (b_{j_2}-b_{j_1})m_1+(b_{j_3}-b_{j_2})m_2+(b_{j_4}-b_{j_3})m_3=(b_{j_4}-b_{j_1})m_4,
    \end{equation*}

    \begin{equation*}\label{type3}
    \text{Type 3.}\qquad    (b_{j_2}-b_{j_1})m_1+(b_{j_4}-b_{j_2})m_2=(b_{j_4}-b_{j_3})m_3+(b_{j_3}-b_{j_1})m_4,
    \end{equation*}

    \begin{equation*}\label{type4}
    \text{Type 4.}\qquad    (b_{j_4}-b_{j_1})m_1+(b_{j_3}-b_{j_2})m_2=(b_{j_3}-b_{j_1})m_3+(b_{j_4}-b_{j_2})m_4.
    \end{equation*}

    \noindent 
    Since there are $\binom{r}{3}$ Type 1 equations and $\binom{r}{4}$ Type $i$ equations for each $i=2,3,4$, we conclude that $M\s[n]$ is $R_4$-solution-free with respect to $R=\{b_1,\ldots,b_r\}$ if and only if it contains no nontrivial solution to any of the $t:=\binom{r}{3}+3\binom{r}{4}$ equations described above. For notational convenience, let us denote those equations by $Eq_1,\ldots,Eq_t$, respectively.


    In order to construct a sufficiently large solution-free set $M\s[n]$, in the rest of the proof we proceed as follows. Using Lemmas \ref{4-solution-free} and \ref{sumfree} we can construct $t$ sets $M_1,\ldots,M_t\s[n]$ such that for $1\le i\le t$, $|M_i|>n^{1-o(1)}$ and it has no nontrivial solution to the equation $Eq_i$. Then Lemma \ref{prmethod} will guarantee the existence of a large set $M:=M_1\cap\big(\cap_{j=2}^t (M_j+\mu_j)\big)$ which contains no nontrivial solution to any of the equations $Eq_1,\ldots,Eq_t$, as detailed below. 

    Since $\max\{b_i:1\le i\le r\}=b_r=n^{o(1)}$, applying Lemma \ref{4-solution-free} with $l=3,4$, it follows that for every equation of Types 1 or 2, there exists a set $M'\s[n]$,
    $$|M'|\ge\fr{n}{2^{O(\sqrt{\log n\log b_{r}})}}=\fr{n}{2^{O(\log^{3/4} n)}}$$

    \noindent with no nontrivial solution to it.

    It remains to consider equations of Types 3 and 4. Let us compare the coefficients of a Type 3 equation with that of Lemma \ref{sumfree}. Set $a_1:=b_{j_2}-b_{j_1}$, $a_2:=b_{j_3}-b_{j_1}$, $a_3:=b_{j_4}-b_{j_3}$ and $a_4:=b_{j_4}-b_{j_2}$. It is not hard to check that $a_i,~1\le i\le 4$ satisfy the four constraints of Lemma \ref{sumfree}, with  
    $a\le\fr{1}{2}$ and $b\ge\fr{1}{2^{r-3}}$, where the inequalities hold when $j_4=j_3+1$ and $j_4=4$, respectively.
    Thus for every Type 3 equation, there exists a set $M''\s[n]$,
     $$|M''|\ge\fr{n}{2^{O\big((\log n)^{1+\fr{b(a-1)}{2}}\big)}}\ge\fr{n}{2^{O\big((\log n)^{1-\fr{1}{2^{r-1}}}\big)}}$$
     \noindent with no nontrivial solution to it, where the second inequality is obtained by setting $a=\frac{1}{2}$ and $b=\frac{1}{2^{r-3}}$.

     For a Type 4 equation, we can prove similar results by setting $a_1:=b_{j_3}-b_{j_2}$, $a_2:=b_{j_3}-b_{j_1}$, $a_3:=b_{j_4}-b_{j_2}$ and $a_4:=b_{j_4}-b_{j_1}$. The details are omitted.

    Observe that $c:=1-\fr{1}{2^{r-1}}>\frac{3}{4}$ holds for any $r\ge 4$. Then according to the discussions above, for $1\le i\le t$, there exists a set $M_i\s[n]$ with $|M_i|\ge\fr{n}{2^{O(\log^c n)}}$ which contains no nontrivial solution to $Eq_i$. By Lemma \ref{prmethod}, there exists a set $M\s[n]$ with $|M|\ge\fr{n}{2^{O(\log^c n)}}$ which contains no nontrivial solution to any of the $t$ equations $Eq_1,\ldots,Eq_t$, that is, $M\s[n]$ is an $R_4$-solution-free set with $R=\{b_1,\ldots,b_r\}$. Moreover, we have $|M|>n^{1-o(1)}$ by Lemma \ref{nepsilon}. The proof of the theorem is thus completed.
   \end{proof}

   The proof of Theorem \ref{R4sumfree} in fact leads to the following result, whose proof is omitted.

   \begin{theorem}\label{R3sumfree}
    For every fixed integer $r\ge 3$, there exists an $r$-element set $R\s[n]$ and an $R_3$-solution-free set $M\s[n]$ with $|M|>n^{1-o(1)}$.
   \end{theorem}

\section{Using solution-free sets to construct rainbow-cycle-free hypergraphs}\label{4}

 \noindent It is known that tools from additive number theory can be used to construct hypergraphs satisfying certain Tur{\'a}n-type properties, see, e.g. \cite{ipp,Furediconst,RS,shf}. Given a positive integer $r$ and an appropriate solution-free set $M\s [n]$, we can construct an $r$-partite $r$-graph $\ma{H}_M$ as follows. The vertex set $V(\ma{H}_M)$ is the disjoint union of $r$ pairwise disjoint vertex parts $V_1,\ldots,V_r$, i.e., $V(\ma{H}_M)=\cup_{i=1}^r V_i$, where for each $1\le i\le r$, $V_i$ is a copy of $[n^{1+o(1)}]$.
 The edge set is defined as

   \begin{equation*}
       \ma{H}_M=\left\{A(y,m)=(y+b_1m,\ldots,y+b_rm):y\in[n],m\in M\right\}\s V_1\times\cdots\times V_r,
   \end{equation*}

   \noindent where $R:=\{b_1,b_2,\ldots,b_r\}\s[n^{o(1)}]$ is a set of $r$ distinct elements, and $A(y,m)$ is an ordered $r$-tuple such that $y+b_im\in V_i$ for each $1\le i\le r$.


   \begin{remark}\label{cardinality}
   Noting that the $\ma{H}_M$ is well defined as for all $y\in[n],m\in[M],1\le i\le r$, we have $y+b_im\le n^{1+o(1)}$.
    It is easy to see that $|V(\ma{H}_M)|=n^{1+o(1)}$ and $|\ma{H}_M|=n|M|$. Moreover, if $|M|>n^{1-o(1)}$, then $|\ma{H}_M|>n^{2-o(1)}>|V(\ma{H}_M)|^{2-o(1)}$ for sufficiently large $n$ and suitable $o(1)$ (where $o(1)\rightarrow0$ as $n\rightarrow\infty$).
    \end{remark}

    The following lemma is easy to prove.

   \begin{lemma}\label{linear}
       For any $M\s[n]$ and $R\s[n^{o(1)}]$, $\ma{H}_M$ is always a linear hypergraph. 
   \end{lemma}

   \begin{proof}
    Assume towards contradiction that there exist $y,y'\in [n]$ and $m,m'\in M$ such that $(y,m)\neq(y',m')$ and $|A(y,m)\cap A(y',m')|\ge 2$. Then, there exist $1\le i,j\le r$ and $i\neq j$ such that
         \begin{equation*}
    \left\{
        \begin{aligned}
           y+b_im&=y'+b_im',\\
           y+b_jm&=y'+b_jm'.\\
        \end{aligned}
    \right.
    \end{equation*}

    \noindent Since $b_i-b_j\neq 0$, it is easy to check that $y-y'=b_i(m'-m)=b_j(m'-m)$, which implies that $m=m'$, and hence $y=y'$, a contradiction.
   \end{proof}

   The next theorem connects solution-free sets to rainbow-cycle-free hypergraphs.

   \begin{theorem}\label{sumfree-rainbow}
        Given $M\s[n]$ and $R\s[n^{o(1)}]$, if $M$ is an $R_L$-solution-free set for some $3\le L\le r$, then the hypergraph $\ma{H}_M$ constructed above is a linear $r$-partite $r$-graph which contains no rainbow cycle of length less than $L+1$.
   \end{theorem}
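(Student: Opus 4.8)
The plan is to show the contrapositive: if $\ma{H}_M$ contains a rainbow $k$-cycle with $3 \le k \le L$, then the defining sum-free set $M$ must contain a nontrivial solution to one of the equations in the definition of $R_L$-sum-freeness, contradicting our hypothesis on $M$. So first I would unwind the definition of a rainbow cycle in $\ma{H}_M$. A rainbow $k$-cycle is an alternating sequence $v_1, A_1, v_2, A_2, \ldots, v_k, A_k, v_1$ where the turning vertices $v_1, \ldots, v_k$ lie in $k$ distinct parts, say $v_t \in V_{i_t}$ with the $i_t$ pairwise distinct, and $v_t, v_{t+1} \in A_t$ (indices mod $k$). Writing each edge in the parametric form $A_t = A(y_t, m_t) = (y_t + b_1 m_t, \ldots, y_t + b_r m_t)$, the condition $v_t \in A_{t-1} \cap A_t$ translates into the coordinate equation
\begin{equation*}
 y_{t-1} + b_{i_t} m_{t-1} = y_t + b_{i_t} m_t
\end{equation*}
for each $t$ (indices cyclically), since $v_t$ is the vertex of both $A_{t-1}$ and $A_t$ in part $V_{i_t}$.

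Next I would eliminate the $y_t$'s from this cyclic system. From the relation above, $y_{t-1} - y_t = b_{i_t}(m_t - m_{t-1})$. Summing this telescoping-style identity around the whole cycle, the left-hand side collapses to $0 = \sum_{t=1}^{k}(y_{t-1} - y_t)$, so we get
\begin{equation*}
 \sum_{t=1}^{k} b_{i_t}(m_t - m_{t-1}) = 0,
\end{equation*}
and after regrouping the terms by the coefficient of each $m_t$ this becomes exactly
\begin{equation*}
 (b_{i_2} - b_{i_1}) m_1 + (b_{i_3} - b_{i_2}) m_2 + \cdots + (b_{i_k} - b_{i_{k-1}}) m_{k-1} + (b_{i_1} - b_{i_k}) m_k = 0,
\end{equation*}
which is precisely one of the equations appearing in the definition of an $R_L$-sum-free set (with $l = k \le L$ and the $l$-element subset $S = \{b_{i_1}, \ldots, b_{i_k}\} \subseteq R$, using that the indices $i_t$ are distinct so these are $k$ distinct elements of $R$). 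Since $M$ is $R_L$-sum-free, the only solution in $M$ is the trivial one $m_1 = m_2 = \cdots = m_k$.

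Finally I would argue that this forces a degeneracy contradicting the definition of a cycle. If all $m_t$ are equal to a common value $m$, then going back to $y_{t-1} - y_t = b_{i_t}(m_t - m_{t-1}) = 0$ gives $y_1 = y_2 = \cdots = y_k$, so all the edges $A_t = A(y_t, m_t)$ coincide — violating condition (b) in the definition of a cycle, that $A_1, \ldots, A_k$ be distinct. (Alternatively, and perhaps more cleanly, one can note that two distinct turning vertices $v_s \ne v_t$ would have to coincide once the parameters collapse, violating condition (a); but either route works.) This contradiction completes the proof that $\ma{H}_M$ has no rainbow cycle of length at most $L$, i.e., no rainbow cycle of length less than $L+1$. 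The $r$-uniform $r$-partite structure is immediate from the construction $\ma{H}_M \subseteq V_1 \times \cdots \times V_r$, and linearity is exactly Lemma \ref{linear}. The one point requiring a little care — the \emph{main obstacle}, such as it is — is bookkeeping the cyclic indexing and making sure the regrouping of coefficients genuinely matches the stated equation for an arbitrary choice of the distinct parts $i_1, \ldots, i_k$; in particular one should check that the relabeling $S = \{b_{i_1}, \ldots, b_{i_k}\}$ is allowed to be in any order (the definition quantifies over all $l$-element subsets and the equation is built from a chosen cyclic ordering of them), so no genuine case analysis is needed.
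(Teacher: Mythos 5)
Your proposal is correct and follows essentially the same route as the paper's proof: write each edge of the rainbow cycle as $A(y_t,m_t)$, translate the incidences $v_t\in A_{t-1}\cap A_t$ into the cyclic linear system, eliminate the $y_t$'s to land on exactly the defining equation of an $R_L$-sum-free set, and conclude $m_1=\cdots=m_k$, hence $y_1=\cdots=y_k$, so the edges coincide, contradicting the definition of a cycle. (The only cosmetic difference is a global sign in your regrouped equation, which is immaterial since it is set equal to zero.)
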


   \begin{proof}
       By our construction $\ma{H}_M$ is clearly an $r$-partite $r$-graph. Moreover, its linearity is justified by Lemma \ref{linear}. It remains to show that $\ma{H}_M$ does not contain any rainbow cycle of length less than $L+1$.

       Assume towards contradiction that $\ma{H}_M$ contains a rainbow $l$-cycle for some $3\le l\le L$. Denote such a cycle by
       $$v_1,A(y_1,m_1),v_2,A(y_2,m_2),\ldots,v_l,A(y_l,m_l),v_1,$$

      \noindent where we assume that there exist $l$ distinct integers $1\le j_1,\ldots,j_l\le r$ such that $v_i\in V_{j_i}$ for $1\le i\le l$. 
       According to the linearity and the $r$-partiteness of $\ma{H}_M$, it is easy to check by definition that
      $$A(y_i,m_i)\cap A(y_{i+1},m_{i+1})\cap V_{j_{i+1}}=\{v_{i+1}\}\text{ for $1\le i\le l-1$ and }A(y_l,m_l)\cap A(y_1,m_1)\cap V_{j_1}=\{v_1\}.$$

      \noindent
      As for $1\le j\le r$, $A(y,m)\cap V_j=\{y+b_jm\}$, the following $l$ equations must hold simultaneously
      $$ y_i+b_{j_{i+1}}m_i=y_{i+1}+b_{j_{i+1}}m_{i+1}\text{\quad for $1\le i\le l-1$ \quad and\quad  $y_l+b_{j_1}m_l=y_1+b_{j_1}m_1$}.$$

    \noindent By a simple elimination, one can infer that
    $$(b_{j_2}-b_{j_1})m_1+(b_{j_3}-b_{j_2})m_2+\cdots+(b_{j_l}-b_{j_{l-1}})m_{l-1}+(b_{j_1}-b_{j_l})m_l=0.$$
    \noindent We conclude that $m_1=\cdots=m_l$, since $M$ is $R_L$-solution-free and $3\le l\le L$. It follows that $y_1=\cdots=y_l$ and hence $A(y_1,m_1)=\cdots=A(y_l,m_l)$, which is obviously a contradiction. Consequently, $\ma{H}_M$ contains no rainbow cycles with length less than $L+1$, as needed.
   \end{proof}


  \begin{proof}[\textbf{Proof of Theorem \ref{rainbow}}]
    This theorem is a direct consequence of Theorems \ref{R4sumfree}, \ref{R3sumfree}, \ref{sumfree-rainbow} and Remark \ref{cardinality}.
  \end{proof}

  \section{Using rainbow-cycle-free hypergraphs to construct sparse hypergraphs}\label{5}

  \noindent In this section, we will construct $\ma{G}_r(er-2e+3,e)$-free $r$-graphs for $e=4,5,7,8$. 
  The constructions for $\ma{G}_r(4r-5,4)$-free and $\ma{G}_r(5r-7,5)$-free hypergraphs are relatively simple. Indeed, below we will show that linear $\ma{G}_r(3r-3,3)$-free hypergraphs are simultaneously $\ma{G}_r(4r-5,4)$-free and $\ma{G}_r(5r-7,5)$-free. However, for $e=6$,
$\ma{G}_r(3r-3,3)$-free hypergraphs are not necessarily $\ma{G}_r(6r-9,6)$-free. In fact, we are not able to construct $\ma{G}_r(6r-9,6)$-free hypergraphs which meet the lower bound of Conjecture \ref{conjecture}. Surprisingly, for $e=7,8$, based on the rainbow-cycle-free hypergraphs, we can construct sufficiently large $\ma{G}_r(7r-11,7)$-free and $\ma{G}_r(8r-13,8)$-free hypergraphs whose cardinality match the lower bounds of Conjecture \ref{conjecture}. 

Before presenting those constructions  let us begin with several lemmas which are very useful to our proof.

  \begin{lemma}\label{applicationof63}
    Assume that $\ma{H}$ is a linear $\ma{G}_r(3r-3,3)$-free $r$-graph. Let $A$ and $B$ be two edges of $\ma{H}$ such that $A\cap B\neq\emptyset$. If some other edge $C\in\ma{H}\setminus\{A,B\}$ has nonempty intersection with both $A$ and $B$, then we must have $C\cap A=C\cap B=A\cap B$, i.e., $A,B$ and $C$ share precisely one common vertex.
  \end{lemma}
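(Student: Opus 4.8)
The plan is to reduce everything to the $\ma{G}_r(3r-3,3)$-free hypothesis applied to the three edges $A$, $B$, $C$, exactly as in the proof of Lemma~\ref{e-1toe}. First I would set up notation: write $A\cap B=\{x\}$ (this is a single vertex since $\ma{H}$ is linear and $A\cap B\ne\emptyset$), and pick $p\in C\cap A$ and $q\in C\cap B$, which exist by hypothesis; again by linearity $C\cap A=\{p\}$ and $C\cap B=\{q\}$. The goal is to show $p=q=x$.

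The key computation is an inclusion–exclusion bound on $|A\cup B\cup C|$. Suppose for contradiction that the three edges do \emph{not} share a common vertex. I would argue that in this case the three pairwise intersections $A\cap B$, $B\cap C$, $C\cap A$ are three \emph{nonempty} sets whose union has size at least $2$: indeed, if all three coincided they would give a common vertex, contrary to assumption, so at least two of $\{x\},\{p\},\{q\}$ are distinct, giving $|(A\cap B)\cup(B\cap C)\cup(C\cap A)|\ge 2$. Combining this with $|A\cap B|=|B\cap C|=|C\cap A|=1$ (linearity) and the triple intersection being contained in each pairwise intersection, a short inclusion–exclusion / union bound yields
\begin{equation*}
|A\cup B\cup C|\le |A|+|B|+|C|-\bigl(|A\cap B|+|B\cap C|+|C\cap A|\bigr)+|A\cap B\cap C|\le 3r-3+|A\cap B\cap C|.
\end{equation*}
If $A\cap B\cap C=\emptyset$ this already gives $|A\cup B\cup C|\le 3r-3$, contradicting the $\ma{G}_r(3r-3,3)$-free property. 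If instead $A\cap B\cap C\ne\emptyset$, then that common vertex lies in all three pairwise intersections, so $\{x\}=\{p\}=\{q\}$, which is precisely the conclusion we wanted — so this branch is not actually a contradiction case but the desired outcome. Thus I should phrase the dichotomy cleanly: either the three edges share a common vertex (done), or the pairwise intersections are not all equal, in which case the bound $|A\cup B\cup C|\le 3r-3$ (taking the pairwise-intersection union to have size $\ge 2$, and noting the triple intersection is then empty) contradicts $\ma{G}_r(3r-3,3)$-freeness.

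I do not expect a real obstacle here; the only point requiring a little care is bookkeeping the inclusion–exclusion so that the "size $\ge 2$" gain from the pairwise-intersection union is correctly charged against the $+1+1+1$ of the pairwise intersections, to land at $3r-3$ rather than $3r-2$. Concretely: $|A\cup B\cup C|=3r-|A\cap B|-|B\cap C|-|C\cap A|+|A\cap B\cap C|$, and when the three pairwise intersections are single vertices that are not all the same, $A\cap B\cap C=\emptyset$, so $|A\cup B\cup C|=3r-3<3r-3+1$, contradicting freeness since a $\ma{G}_r(3r-3,3)$ needs only $\le 3r-3$ vertices on $3$ edges. Hence the edges must have a common vertex, and then $C\cap A=C\cap B=A\cap B=\{$that vertex$\}$, completing the proof. $\qed$
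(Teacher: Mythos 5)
Your proof is correct and is exactly the argument the paper has in mind: the paper dismisses this lemma as ``a direct consequence of the $\ma{G}_r(3r-3,3)$-free property and the linearity of $\ma{H}$,'' and the inclusion--exclusion computation you carry out (pairwise intersections of size one, empty triple intersection when there is no common vertex, hence $|A\cup B\cup C|=3r-3$, a contradiction) is the same calculation the paper performs explicitly in its proof of Theorem \ref{noTim63}. No gap; the only cosmetic issue is the mid-proof detour where you first treat the nonempty-triple-intersection branch as a contradiction case before correctly reclassifying it as the desired conclusion, which you already clean up in your final dichotomy.
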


  \begin{proof}
    The lemma follows directly from the fact that $\ma{H}$ is linear and $\ma{G}_r(3r-3,3)$-free.
  \end{proof}

  \begin{lemma}\label{e-1toe}
  Let $e\ge 4$ be a positive integer and $\ma{F}=\{A_1,\ldots,A_e\}$ be a linear $r$-partite $r$-graph formed by exactly $e$ edges. Assume that $\ma{F}$ is $\ma{G}_r(3r-3,3)$-free and $\ma{G}_r((e-1)r-2(e-1)+3,e-1)$-free but not  $\ma{G}_r(er-2e+3,e)$-free, i.e., $|V(\ma{F})|\le er-2e+3$.
  Then, 
          for any $A_i\in\ma{F}$ there exist three distinct edges $A_{i_1},A_{i_2},A_{i_3}\in\ma{F}\setminus\{A_i\}$ such that
          \begin{itemize}
            \item  [\rm{(1)}]  $A_{i}$ intersects each of $A_{i_1},A_{i_2}$ and $A_{i_3}$ in a different vertex, i.e., $|A_i\cap(A_{i_1}\cup A_{i_2}\cup A_{i_3})|=3$,
            \item  [\rm{(2)}] $A_{i_1},A_{i_2}$ and $A_{i_3}$ are pairwise disjoint,
            \item  [\rm{(3)}] $|A_{i_1}\cup A_{i_2}\cup A_{i_3}\cup A_{i}|=4r-3$.
          \end{itemize}
  \end{lemma}

  \begin{proof}
    By the $\ma{G}_r((e-1)r-2(e-1)+3,e-1)$-free property of $\ma{F}$, we have $|\cup_{i=1}^{e-1}A_i|\ge (e-1)r-2(e-1)+4$. Denote $X=\cup_{i=1}^{e-1}A_i$. Below let us prove the lemma for $i=e$. Observe that
    $$er-2e+3\ge|X\cup A_{e}|=|X|+|A_{e}|-|X\cap A_{e}|\ge (e-1)r-2(e-1)+4+r-|X\cap A_{e}|,$$

    \noindent which implies that $|A_{e}\cap X|\ge 3$. Since $\ma{F}$ is linear, there exist three distinct edges $A_{i_1}, A_{i_2}, A_{i_3}\in\{A_1,\ldots,A_{e-1}\}$ such that $A_{e}$ intersects each of them in a different vertex, completing the proof of (1).

    Assume without loss of generality that $i_1=1,~i_2=2,~i_3=3$ and $A_1\cap A_{e}=\{a\}$, $A_2\cap A_{e}=\{b\}$, $A_3\cap A_{e}=\{c\}$, where $a,b,c\in V(\ma{F})$. Since $\ma{F}$ is also $r$-partite, $a,b,c$ must be located in different vertex parts of $\ma{F}$, say, $a\in V_1$, $b\in V_2$ and $c\in V_3$.
    We claim that $A_1,A_2$ and $A_3$ are pairwise disjoint. Assume towards contradiction that $A_1\cap A_2=\{d\}\neq\emptyset$. Then, by the linearity of $\ma{F}$ it is easy to see that $d\not\in\{a,b,c\}$. Since $A_1\cap A_{e}=\{a\}, A_2\cap A_{e}=\{b\}$ and $A_1\cap A_2=\{d\}$, we conclude that $|A_1\cup A_2\cup A_{e}|\le 3r-3$, contradicting the assumption that $\ma{F}$ is $\ma{G}_r(3r-3,3)$-free. Similarly, one can show that $A_1\cap A_3=\emptyset$ and $A_2\cap A_3=\emptyset$. The proof of (2) is thus completed.

    It remains to prove (3). Indeed, this statement follows from the fact that $|A_1\cup A_2\cup A_3|=3r$ and $|A_{e}\cap(A_1\cup A_2\cup A_3)|=3$. 
  \end{proof}

  \begin{lemma}\label{maxdeg}
     With the assumption of Lemma \ref{e-1toe}, it holds that for any vertex $a\in V(\ma{F})$ we have $\deg(a)\le\lf\fr{e}{3}\rf$, where $\deg(a)$ is the number of edges of $\ma{F}$ containing $a$.
  \end{lemma}

  \begin{proof}
    Suppose that $\max\{\deg(v):v\in V(\ma{F})\}=l$. To prove the lemma, it is sufficient to show that $l\le\lf\fr{e}{3}\rf$. Choose a vertex $a\in V(\ma{F})$ so that $\deg(a)=l$. Assume that $a\in V_1$ and let $A_1,\ldots,A_l$ be the $l$ edges of $\ma{F}$ that contain $a$. According to the linearity of $\ma{F}$, it is easy to see that $A_1\setminus\{a\},\ldots,A_l\setminus\{a\}$ are pairwise disjoint.
    For $1\le i\le l$, applying Lemma \ref{e-1toe} to each $A_i$, it follows that for every $A_i$ there exist three edges $B_{i_1},B_{i_2},B_{i_3}$ which satisfy the conclusion of Lemma \ref{e-1toe}.

    Again, by the linearity of $\ma{F}$, one can infer that for each $i$, at most one of the edges in $\{A_1,\ldots,A_l\}\setminus\{A_i\}$ can play the role of some edge in $\{B_{i_1},B_{i_2},B_{i_3}\}$. As a consequence, for each $1\le i\le l$, there exist at least two distinct edges, say, $B_{i_1},B_{i_2}$, belonging to $\ma{F}\setminus\{A_1,\ldots,A_l\}$ such that
    $$B_{i_j}\cap A_i\neq\emptyset,~B_{i_j}\cap A_i\neq\{a\}\text{ for $j=1,2$. }$$

    We claim that the $2l$ edges $\{B_{i_j}:1\le i\le l,~1\le j\le 2\}$ are pairwise distinct. Assume the opposite. Then there exist $1\le i\neq i'\le l$ and some $B\in\{B_{i_j}:1\le i\le l,~1\le j\le 2\}$ such that $B\cap A_i\neq\emptyset$ and $B\cap A_{i'}\neq\emptyset$. Thus by Lemma \ref{applicationof63} the only possible situation is that $B\cap A_i\cap A_{i'}=\{a\}$, a contradiction.

    Now the $A_i$'s and the $B_{i_j}$'s have brought us at least $3l$ distinct edges of $\ma{F}$, which implies that $3l\le e$ and hence $l\le\lf\fr{e}{3}\rf$, as needed. 
  \end{proof}

  \subsection{$\ma{G}_r(4r-5,4)$-free and $\ma{G}_r(5r-7,5)$-free hypergraphs}

  The main task of this subsection is to show that if a linear $r$-partite $r$-graph is $\ma{G}_r(3r-3,3)$-free, then it is also $\ma{G}_r(4r-5,4)$-free and $\ma{G}_r(5r-7,5)$-free. 

  \begin{theorem}\label{3r-3im4r-5}
    Let $\ma{H}$ be a linear $r$-partite $r$-graph. If $\ma{H}$ is $\ma{G}_r(3r-3,3)$-free, then it is also $\ma{G}_r(4r-5,4)$-free.
  \end{theorem}

  \begin{proof}
    Assume towards contradiction that $\ma{H}$ is not $\ma{G}_r(4r-5,4)$-free. Then, there exist four distinct edges $A_1,A_2,A_3,A_4\in\ma{H}$ such that $|\cup_{i=1}^4 A_i|\le 4r-5$. Applying Lemma \ref{e-1toe} with $\ma{F}=\{A_1,A_2,A_3,A_4\}$, it follows that $|\cup_{i=1}^4 A_i|=4r-3$, a contradiction.
  \end{proof}

  \begin{theorem}\label{3r-3im5r-7}
    Let $\ma{H}$ be a linear $r$-partite $r$-graph. If $\ma{H}$ is $\ma{G}_r(3r-3,3)$-free, then it is also $\ma{G}_r(5r-7,5)$-free.
  \end{theorem}

  \begin{proof}
        Assume towards contradiction that $\ma{H}$ is not $\ma{G}_r(5r-7,5)$-free. Then, there exist five distinct edges $A_1,A_2,A_3,A_4,A_5\in\ma{H}$ such that $|\cup_{i=1}^5 A_i|\le 5r-7$. By Theorem \ref{3r-3im4r-5}, $\ma{H}$ is $\ma{G}_r(4r-5,4)$-free. Applying Lemma \ref{e-1toe} to those five edges, it follows that they
        must contain at least three vertices of degree two (for example, vertices $a,b,c$ given by the proof Lemma \ref{e-1toe}). However, this  is impossible since by Lemma \ref{maxdeg} the maximum degree of the vertices contained in $\cup_{i=1}^5 A_i$ cannot exceed $\lf\fr{5}{3}\rf=1$. We conclude that $\ma{H}$ must be $\ma{G}_r(5r-7,5)$-free, as needed.
        \end{proof}

  \subsection{Classification of hypergraphs which are not $\ma{G}_r(6r-9,6)$-free}

 \noindent Consider the following linear 3-partite 3-graph (see Table \ref{tab63not96} below) formed by six edges $A_1,\ldots,A_6$ and nine vertices $\{a,b,c,d,e,f,g,h,i\}$.
 It is straightforward to check that such a 3-graph is $\ma{G}_3(6,3)$-free, $\ma{G}_3(7,4)$-free and $\ma{G}_3(8,5)$-free, but it is not $\ma{G}_3(9,6)$-free.
 It is clear that this hypergraph provides an example which illustrates that a linear $\ma{G}_r(3r-3,3)$-free $r$-partite $r$-graph is not necessarily $\ma{G}_r(6r-9,6)$-free.

    \begin{table} [ht]
    \begin{center}
    \begin{tabular}{|c|c|c|c|c|c|c|}
      \hline
       & $A_1$ & $A_2$ & $A_3$ & $A_4$ & $A_5$ & $A_6$\\\hline
      $V_1$  & $a$  & $f$  & $h$&  $f$ & $h$ & $a$ \\\hline
      $V_2$ & $d$  & $b$ & $i$ &  $i$ & $d$ & $b$\\\hline
      $V_3$ & $e$  & $g$ & $c$ &  $e$ & $g$ & $c$\\\hline
    \end{tabular}
        \end{center}
        \caption{A linear 3-partite 3-graph of six edges which is (6,3)-free but not (9,6)-free}\label{tab63not96}
    \end{table}

To our surprise, if an $r$-graph is simultaneously $\ma{G}_r(3r-3,3)$-free and rainbow-cycle-free, then even it is not $\ma{G}_r(6r-9,6)$-free,
the next theorem shows that for any six edges whose union contains at most $6r-9$ vertices, there is only one possible configuration (up to isomorphism).

 \begin{theorem}\label{clafy6r-9}
    Let $r\ge 3$ and $\ma{H}$ be a linear $r$-partite $r$-graph. Assume that $\ma{H}$ contains no rainbow cycles of length three or four. If there exist six edges $A_1,\ldots,A_6$ of $\ma{H}$ such that $|\cup_{i=1}^6 A_i|\le 6r-9$, then $|\cup_{i=1}^6 A_i|= 6r-9$ and $A_1,\ldots,A_6$
    have only one possible configuration (up to isomorphism), as described below:

     \begin{itemize}
       \item [\rm{(1)}] $A_1,A_2,A_3$ are pairwise disjoint and so are $A_4,A_5,A_6$;
       \item [\rm{(2)}] $|(A_1\cup A_2\cup A_3)\cap (A_4\cup A_5\cup A_6)|=9$ and for any $i,i'\in\{1,2,3\}$ and $j,j'\in\{4,5,6\}$, we have $|A_i\cap A_j|=1$ and $A_i\cap A_j\neq A_{i'}\cap A_{j'}$ for $(i,j)\neq (i',j')$;
       \item [\rm{(3)}]  all nine vertices in the intersection $(A_1\cup A_2\cup A_3)\cap (A_4\cup A_5\cup A_6)$ must be contained in three vertex parts, say, $V_1,V_2$ and $V_3$.
     \end{itemize}

     \noindent More precisely, the intersection $(A_1\cup A_2\cup A_3)\cap (A_4\cup A_5\cup A_6):=\{a,b,c,d,e,f,g,h,i\}$ can be characterized by Table \ref{tab63not96}. 
 \end{theorem}


 \begin{remark}
    Since $\ma{H}$ contains no rainbow 3-cycles, by Lemma \ref{noTim63} and Theorem \ref{3r-3im5r-7} it is also $\ma{G}_r(3r-3,3)$-free and $\ma{G}_r(5r-7,5)$-free. Thus it follows from Lemma \ref{maxdeg} that the maximum degree (with respect to the subhypergraph formed by $\{A_1,\ldots,A_6\}$) of a vertex $x\in\cup_{i=1}^6 A_i$ is at most two. Assume that $\{A_1,\ldots,A_6\}$ contains $\lambda$ vertices of degree two and $\mu$ vertices of degree one. Then, it is clear from the assumption that $\lambda+\mu\le 6r-9$ and
    $$6r=\sum_{x\in\cup_{i=1}^6 A_i}\deg(x)=2\lambda+\mu\le 2\lambda+(6r-9-\lambda)=6r-9+\lambda,$$

    \noindent which implies that $\lambda\ge9$. Theorem \ref{clafy6r-9} in fact shows that $|\cup_{i=1}^6 A_i|=6r-9$ and the subhypergraph $\{A_1,\ldots,A_6\}$ contains precisely nine vertices of degree two and $6r-18$ vertices of degree one.
    In particular, the nine degree two vertices have only one possible configuration (note that we do not care about the degree one vertices since they will not appear in any intersections), which is described in the conclusion of the theorem above.
 \end{remark}

 \begin{proof}
    By the remark above we know that $\ma{H}$ is $\ma{G}_r(3r-3,3)$-free and $\ma{G}_r(5r-7,5)$-free. Let us apply Lemma \ref{e-1toe}, with, say, $A_6$ playing the role of $A_i$, and $A_1,A_2,A_3$ playing the roles of $A_{i_1},A_{i_2},A_{i_3}$, respectively. Assume without loss of generality that $A_1\cap A_6=\{a\}\in V_1$, $A_2\cap A_6=\{b\}\in V_2$, and $A_3\cap A_6=\{c\}\in V_3$, as depicted below.

    \begin{center}
    \begin{tabular}{|c|c|c|c|c|c|c|}
      \hline
       & $A_1$ & $A_2$ & $A_3$ & $A_4$ & $A_5$ & $A_6$\\\hline
      $V_1$ & $a$ &  &  &  &  & $a$ \\\hline
      $V_2$ &  & $b$ &  &  &  & $b$\\\hline
      $V_3$ &  &  & $c$ &  &  & $c$\\\hline
    \end{tabular}
        \end{center}

    We will make use of the following two claims, whose proofs are postponed to the end of this subsection.

    \begin{claim}\label{claim-1}
    $A_1,A_2,A_3$ are pairwise disjoint and so are $A_4,A_5,A_6$. Moreover, $$|(A_1\cup A_2\cup A_3)\cap (A_4\cup A_5\cup A_6)|=9$$ and $\{A_i\cap A_j:1\le i\le 3,~4\le j\le 6\}$ is a set of nine distinct vertices.
    \end{claim}

        \begin{claim}\label{claim-2}
        The six vertices which appear in the intersections $A_4\cap(A_1\cup A_2\cup A_3)$ and $A_5\cap(A_1\cup A_2\cup A_3)$ are all contained in the vertex parts $V_1,V_2,V_3$, where we have assumed that $A_1\cap A_6=\{a\}\in V_1$, $A_2\cap A_6=\{b\}\in V_2$ and $A_3\cap A_6=\{c\}\in V_3$.
    \end{claim}

Assuming the correctness of the two claims above, let us continue the proof of the theorem.
    On one hand, Claim \ref{claim-1} implies that $|\cup_{i=1}^6 A_i|=6r-9$ and proves conclusions (1), (2) of the theorem. On the other hand, it is easy to see that Claim \ref{claim-2} proves conclusion (3) of the theorem. Moreover, as $A_1\setminus(V_1\cup V_2\cup V_3),\ldots,A_6\setminus(V_1\cup V_2\cup V_3)$ are pairwise disjoint, for the sake of classification there is no need to consider the vertices that are contained in those subsets. Assume without loss of generality that the restrictions of $A_1,A_2,A_3$ to $V_1,V_2,V_3$ are given by the following table.


    \begin{center}
    \begin{tabular}{|c|c|c|c|}
      \hline
       & $A_1$ & $A_2$ & $A_3$ \\\hline
      $V_1$  & $a$  & $f$  & $h$\\\hline
      $V_2$ & $d$  & $b$ & $i$\\\hline
      $V_3$ & $e$  & $g$ & $c$ \\\hline
    \end{tabular}
        \end{center}

    By the linearity of $\ma{H}$ and the two claims above, it is easy to check by case analysis that up to isomorphism, there is only one possibility for the choice of $A_4$ and $A_5$, when considering their restrictions to $V_1\cup V_2\cup V_3$, which is exactly the one described by Table \ref{tab63not96}.  Indeed, given that $|(A_1\cup A_2\cup A_3)\cap (A_4\cup A_5\cup A_6)|=9$, and that all nines vertices in the intersection must be contained in $V_1\cup V_2\cup V_3$, all that remains is to choose the vertex classes of the nine vertices, which is the same task as edge-coloring a $K_{3,3}$. It is well known that there is a unique such edge-coloring up to isomorphism.

   The proof of the theorem is thus completed.
  \end{proof}

  It remains to prove Claims \ref{claim-1} and \ref{claim-2}.

  \begin{proof}[\textbf{Proof of Claim \ref{claim-1}}] First of all, it follows by  Lemma \ref{e-1toe} that $A_1,A_2$ and $A_3$ are pairwise disjoint.
    Then, by applying Lemma \ref{e-1toe} to $A_i,~1\le i\le 3$, we conclude that there exist (not necessarily distinct) edges  $A_{ij}\in\{A_1,\ldots,A_6\}$, where $1\le i,j\le 3$, such that
    \begin{itemize}
        \item $|A_1\cap(A_{11}\cup A_{12}\cup A_{13})|=3,~|A_2\cap(A_{21}\cup A_{22}\cup A_{23})|=3,~|A_3\cap(A_{31}\cup A_{32}\cup A_{33})|=3,$
        \item for each $i$, $A_{i1}, A_{i2}, A_{i3}$ are pairwise disjoint.
    \end{itemize}

     \noindent 
     By the pairwise disjointness of $A_1,A_2,A_3$, it is clear that for $1\le i\le 3$, we have $\{A_{i1}, A_{i2}, A_{i3}\}=\{A_4,A_5,A_6\}$, which implies that $A_4,A_5,A_6$ are also pairwise disjoint. Moreover, by 
     the discussion above, it is not hard to check that the nine vertices contained in the intersections $A_i\cap (A_4\cup A_5\cup A_6)$, $1\le i\le 3$, are all distinct. The proof of the claim is thus completed.
    \end{proof}

        \begin{proof}[\textbf{Proof of Claim \ref{claim-2}}]
        By symmetry, it suffices to show that $A_4\cap A_1\in V_1\cup V_2\cup V_3$.
    Note that the claim holds fairly straightforwardly for 3-partite hypergraphs. For $r\ge 4$, suppose that there exists some $d$ such that $A_4\cap A_1=\{d\}\not\in V_1\cup V_2\cup V_3$. For simplicity we assume that $d\in V_4$ for some vertex part $V_4\not\in\{V_1,V_2,V_3\}$.

    We claim that in this case $A_4\cap A_2$ and $A_4\cap A_3$ must belong to $V_1\cup V_2\cup V_3$. By Claim \ref{claim-1}, it is easy to see that $A_4\cap A_2\neq \{d\}$ and $A_4\cap A_3\neq \{d\}$. Therefore, by the $r$-partiteness of $\ma{H}$, it is clear that neither of $A_4\cap A_2$ and $A_4\cap A_3$ is contained in $V_4$. Thus the statement above holds trivially for 4-partite hypergraphs.
    For $r\ge 5$, suppose that $A_4\cap A_2=\{e\}$ and $e\in V_5$ for some vertex part $V_5\not\in\{V_1,V_2,V_3,V_4\}$.

    According to the discussion above, we can characterize the degree two vertices in $A_1\cup A_2\cup A_3\cup A_6$ by the following Table \ref{tabv4v5}. It is not hard to check that $$d,A_1,a,A_6,b,A_2,e,A_4,d$$ form a rainbow 4-cycle, a contradiction.
    Analogously, one can show that $A_4\cap A_3\in V_1\cap V_2\cap V_3$, completing the proof of the claim.
    \begin{table}[ht]
    \begin{center}
    \begin{tabular}{|c|c|c|c|c|c|}
      \hline
       & \bm{$A_1$} & \bm{$A_2$} & $A_3$ & \bm{$A_4$}  & \bm{$A_6$}\\\hline
      $V_1$ & $a$ &  &  &  &   $a$ \\\hline
      $V_2$ &  & $b$ &  &  &   $b$\\\hline
      $V_3$ &  &  & $c$ &  &   $c$\\\hline
      $V_4$ & $d$  &  &  & $d$   & \\\hline
      $V_5$ & & $e$ & &$e$ &\\
      \hline
    \end{tabular}
        \end{center}
         \caption{$A_4\cap A_1\in V_4$ and $A_4\cap A_2\in V_5$, the bold edges form a rainbow 4-cycle}\label{tabv4v5}
         \end{table}


        Recall that $A_4\cap\{a,b,c\}=\emptyset$ and that $A_2\cap V_2=\{b\}$. Then, $A_4\cap A_2$ is located in either $V_1$ or $V_3$. On one hand, if $A_4\cap A_2=\{e\}\in V_3$, then Table \ref{tabv4v3} below indicates that $$d,A_1,a,A_6,b,A_2,e,A_4,d$$ must form a rainbow 4-cycle and we arrive at a contradiction.

        \begin{table}[h]
        \begin{center}
        \begin{tabular}{|c|c|c|c|c|c|}
      \hline
      & \bm{$A_1$} & \bm{$A_2$} & $A_3$ & \bm{$A_4$}  & \bm{$A_6$}\\\hline
      $V_1$ & $a$ &  &  &  &   $a$ \\\hline
      $V_2$ &  & $b$ &  &  &   $b$\\\hline
      $V_3$ &  & $e$  & $c$ & $e$ &   $c$\\\hline
      $V_4$ & $d$  &  &  & $d$   & \\\hline
    \end{tabular}
        \end{center}
        \caption{$A_4\cap A_1\in V_4$ and $A_4\cap A_2\in V_3$, the bold edges form a rainbow 4-cycle}\label{tabv4v3}
        \end{table}

 \noindent On the other hand, if $A_4\cap A_2=\{e\}\in V_1$, then $A_4\cap A_3$ must belong to $V_2$ (it cannot belong to $V_1$ as $A_4\cap A_2\in V_1$, and it cannot belong to $V_3$ as $A_3\cap V_3=\{c\}$ and $c\not\in A_4$). Let us denote $A_4\cap A_3=\{f\}\in V_2$. Then Table \ref{tabv4v1} below indicates that $$d,A_1,a,A_6,c,A_3,f,A_4,d$$ again form a rainbow 4-cycle, which is also a contradiction.

    \begin{table}[ht]
        \begin{center}
        \begin{tabular}{|c|c|c|c|c|c|}
      \hline
       & \bm{$A_1$} & $A_2$ & \bm{$A_3$} & \bm{$A_4$} &  \bm{$A_6$}\\\hline
      $V_1$ & $a$ & $e$ &  & $e$ &   $a$ \\\hline
      $V_2$ &  & $b$ & $f$ & $f$ &   $b$\\\hline
      $V_3$ &  &   & $c$ &  &   $c$\\\hline
      $V_4$ & $d$  &  &  & $d$ &   \\\hline
    \end{tabular}
        \end{center}
        \caption{$A_4\cap A_1\in V_4$, $A_4\cap A_2\in V_1$, $A_3\cap A_4\in V_2$, the bold edges form a rainbow 4-cycle}\label{tabv4v1}
    \end{table}

      Finally, we conclude that all six vertices which are contained in the intersections $A_4\cap(A_1\cup A_2\cup A_3)$ and $A_5\cap(A_1\cup A_2\cup A_3)$ must be located in $V_1,V_2$ and $V_3$, completing the proof of Claim \ref{claim-2}.
        \end{proof}



%
%


  \subsection{$\ma{G}_r(7r-11,7)$-free hypergraphs}

\noindent Our goal in this subsection is to prove the following theorem.

 \begin{theorem}\label{7r-11}
    Let $r\ge 3$ be a positive integer and $\ma{H}$ be a linear $r$-partite $r$-graph. Assume that $\ma{H}$ contains no rainbow cycles of length three or four. Then $\ma{H}$ is $\ma{G}_r(7r-11,7)$-free.
\end{theorem}

To prove the theorem, we will need the following two lemmas.

  \begin{lemma}\label{int6r-9<2}
    Let $r\ge 3$ be a positive integer and $\ma{H}$ be a linear $r$-partite $r$-graph. Assume that $\ma{H}$ contains no rainbow cycles of length three or four. If there exist six edges $A_1,\ldots,A_6$ of $\ma{H}$ with $|\cup_{i=1}^6 A_i|\le 6r-9$, then for any other edge $A_7\in\ma{H}\setminus\{A_1,\ldots,A_6\}$, it holds that $|A_7\cap(\cup_{i=1}^6 A_i)|\le 1$.
\end{lemma}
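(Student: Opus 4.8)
The plan is to exploit the rigidity of the six-edge configuration guaranteed by Theorem~\ref{clafy6r-9}. Applying that theorem to $A_1,\dots,A_6$ we get $|A_1\cup\cdots\cup A_6|=6r-9$ and, up to isomorphism, $A_1,\dots,A_6$ form the unique $G_{3\times 3}$ of Table~\ref{g33}. Write $W=A_1\cup\cdots\cup A_6$. By Remark~\ref{property}, the six edges split into two groups $\{A_1,A_2,A_3\}$ and $\{A_4,A_5,A_6\}$, each consisting of pairwise disjoint edges, and $A_i\cap A_j=\{w_{ij}\}$ for all $i\in\{1,2,3\}$, $j\in\{4,5,6\}$, where the nine \emph{grid vertices} $w_{ij}$ are pairwise distinct and constitute exactly $W\cap(V_1\cup V_2\cup V_3)$. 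Hence every vertex of $W$ is either a grid vertex (lying in exactly one edge of each group, inside $V_1\cup V_2\cup V_3$) or a \emph{private} vertex lying in exactly one $A_s$ and, since it is not a grid vertex, in some part $V_m$ with $m\ge 4$. Now suppose towards a contradiction that $A_7$ contains two distinct vertices $p,q\in W$. Since $A_7$ is $r$-partite, $p$ and $q$ lie in distinct parts; since $\ma{H}$ is linear, $|A_7\cap A_u|\le 1$ for every edge $A_u$. I would split into cases by the types of $p,q$ and in each case produce a rainbow cycle of length $3$ or $4$, contradicting the hypothesis on $\ma{H}$.

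The main case is when both $p$ and $q$ are grid vertices, say $p=w_{i_0j_0}$ and $q=w_{i_1j_1}$. Then $i_0\ne i_1$ (otherwise $p,q\in A_{i_0}\cap A_7$ contradicts linearity), and likewise $j_0\ne j_1$. I would then check that $A_7,A_{i_0},A_{j_1}$ together with $p=w_{i_0j_0}\in A_7\cap A_{i_0}$, $w_{i_0j_1}\in A_{i_0}\cap A_{j_1}$ and $q=w_{i_1j_1}\in A_{j_1}\cap A_7$ form a rainbow $3$-cycle: these three vertices are pairwise distinct grid vertices, and they lie in pairwise distinct parts because $w_{i_0j_0},w_{i_0j_1}$ both lie in the edge $A_{i_0}$, $w_{i_0j_1},w_{i_1j_1}$ both lie in the edge $A_{j_1}$, and $w_{i_0j_0},w_{i_1j_1}$ are the two vertices $p,q$ of $A_7$. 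This forbidden rainbow $3$-cycle contradicts our assumption, and the same device drives the remaining cases.

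If exactly one of $p,q$ is a grid vertex, say $p=w_{i_0j_0}$ while $q$ is private, then $q$ lies in a unique edge $A_t$ with $t\notin\{i_0,j_0\}$ (again by linearity); using the grid vertex $w_{tj_0}$ when $A_t$ is in the first group (resp.\ $w_{i_0t}$ when it is in the second), I would verify that $\{A_7,A_{j_0},A_t\}$ (resp.\ $\{A_7,A_{i_0},A_t\}$) is a rainbow $3$-cycle through $p$, that grid vertex and $q$ — the key being that $q\notin V_1\cup V_2\cup V_3$, so its part differs from those of the two grid vertices, which are distinct since they share the edge $A_{j_0}$ (resp.\ $A_{i_0}$). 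If both $p\in A_s$ and $q\in A_t$ are private with $s\ne t$, then when $A_s,A_t$ lie in different groups, $\{A_7,A_s,A_t\}$ is a rainbow $3$-cycle through $p,w_{st},q$; and when $A_s,A_t$ lie in the same group, I would take any edge $A_j$ of the other group and argue that $\{A_7,A_s,A_j,A_t\}$, with turning vertices $p,w_{sj},w_{tj},q$, is a rainbow $4$-cycle. The hard part is not the idea but the verification that in each case the chosen turning vertices are pairwise distinct and occupy pairwise distinct parts; the most delicate point is this last, same-group subcase, where all four of $p,w_{sj},w_{tj},q$ must land in four distinct parts, using that the private vertices $p,q$ sit in parts $V_m$ with $m\ge 4$ while $w_{sj},w_{tj}$ sit in two distinct parts among $V_1,V_2,V_3$. (For $r=3$ this evaporates: $W$ then consists of nine grid vertices only, so $W$ has no private vertex and the first case alone finishes the proof.) In every case we reach a rainbow $3$- or $4$-cycle, so in fact $|A_7\cap(A_1\cup\cdots\cup A_6)|\le 1$.
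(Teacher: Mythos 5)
Your proposal is correct and follows essentially the same route as the paper: invoke Theorem~\ref{clafy6r-9} and Remark~\ref{property} to pin down the $G_{3\times 3}$ structure, then show that two distinct vertices of $A_7\cap(A_1\cup\cdots\cup A_6)$ force a rainbow $3$-cycle (when at least one of the two hit edges from different groups intersect, or via a grid vertex) or a rainbow $4$-cycle (when both intersection points are private vertices on two disjoint edges of the same group, closed up through an edge of the other group). Your case split by vertex type (grid vs.\ private) is just a reorganization of the paper's Cases 1--3, which split by membership of the intersection points in $V_1\cup V_2\cup V_3$; all the verifications you flag as "to be checked" go through exactly as in the paper.
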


  \begin{proof}
  Recall that by Theorem \ref{clafy6r-9}, $A_1,\ldots,A_6$ have only one possible configuration. 
  Denote $X=\cup_{i=1}^6 A_i$. Assume towards contradiction that there exists some $A_7\in\ma{H}\setminus\{A_1,\ldots,A_6\}$ with $|A_7\cap X|\ge 2$. Then, by the linearity of $\ma{H}$ there exist distinct $i,j\in[6]$ such that $|A_7\cap A_i|=|A_7\cap A_j|=1$ and $A_7\cap A_i\neq A_7\cap A_j$. Therefore, it follows that $A_i\cap A_j=\emptyset$, since otherwise $A_7,A_i$ and $A_j$ will violate the $\ma{G}_r(3r-3,3)$-free property of $\ma{H}$. Consequently, by Theorem \ref{clafy6r-9} (1) we have either $\{i,j\}\s\{1,2,3\}$ or $\{i,j\}\s\{4,5,6\}$.

  Assume without loss of generality that $\{i,j\}\s\{1,2,3\}$. Denote $\{y_i\}=A_7\cap A_i$ and $\{y_j\}=A_7\cap A_j$. Let $V_1,V_2,V_3$ be the vertex parts of $\ma{H}$, given by the proof of Theorem \ref{clafy6r-9}. The remainder of the proof is divided into two cases.



    \vspace{10pt}

    \noindent\textbf{Case 1.} $\{y_i,y_j\}\cap (V_1\cup V_2\cup V_3)\neq\emptyset$.

    \vspace{10pt}

    By symmetry, it is sufficient to consider the case $y_j\in V_1\cup V_2\cup V_3$. Then, by Theorem \ref{clafy6r-9} (2) there exists a $j'\in\{4,5,6\}$ with $A_{j'}\cap A_j=\{y_j\}$ and $|A_{j'}\cap A_i|=1$. As $A_i,A_{j'},A_7$ are pairwise intersecting, and $A_{j'}\cap A_7\neq A_i\cap A_7$, it is not hard to check that $|A_i\cup A_{j'}\cup A_7|\le 3r-3$, which is a contradiction.

    \vspace{10pt}

    \noindent\textbf{Case 2.} $\{y_i,y_j\}\cap (V_1\cup V_2\cup V_3)=\emptyset$.

     \vspace{10pt}

      In this case, by the $r$-partiteness of $\ma{H}$ it is clear that $y_i$ and $y_j$ cannot be contained in the same vertex part. Assume that $y_i\in V_{l_i}$ and $y_j\in V_{l_j}$, where $\{l_i,l_j\}\cap\{1,2,3\}=\emptyset$ and $l_i\neq l_j$. 
      According to Theorem \ref{clafy6r-9} (2), there exists a  $k\in\{4,5,6\}$ such that $A_k\cap A_i=\{x_i\},~A_k\cap A_j=\{x_j\}$, and $\{x_i,x_j\}\s V_1\cup V_2\cup V_3$. It is not hard to check that $$x_i,A_i,y_i,A_7,y_j,A_j,x_j,A_k,x_i$$ form a rainbow 4-cycle, and we arrive at a contradiction.
      The proof of the lemma is thus completed.
    \end{proof}

%

\begin{lemma}\label{inter4atmost1}
    Let $r\ge 4$ be a positive integer and $\ma{H}$ be a linear $r$-partite $r$-graph. Assume that $\ma{H}$ contains no rainbow cycles of length three or four, and that it contains no vertex with degree larger than two. Let $A_1,A_2,A_3,A_4$ be four pairwise disjoint edges of $\ma{H}$. Then, there exists at most one edge $B\in\ma{H}\setminus\{A_1,A_2,A_3,A_4\}$ with $|B\cap(\cup_{i=1}^4 A_i)|=4$.
\end{lemma}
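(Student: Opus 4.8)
The plan is to argue by contradiction: assume there are two \emph{distinct} edges $B, B' \in \ma{H}\setminus\{A_1,A_2,A_3,A_4\}$ with $|B\cap(A_1\cup A_2\cup A_3\cup A_4)|=|B'\cap(A_1\cup A_2\cup A_3\cup A_4)|=4$. Since $\ma{H}$ is linear and the $A_i$ are pairwise disjoint, $|B\cap A_i|\le 1$ for each $i$ while $\sum_{i=1}^4|B\cap A_i|=4$; hence $B$ meets each $A_i$ in exactly one vertex, say $B\cap A_i=\{b_i\}$, and similarly $B'\cap A_i=\{b_i'\}$. A first observation is that $b_i\ne b_i'$ for every $i$: otherwise this common vertex would lie in the three pairwise distinct edges $A_i, B, B'$, contradicting the hypothesis that no vertex of $\ma{H}$ has degree exceeding two.

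Next I track vertex parts. For each $i$ let $\pi(i)$ and $\pi'(i)$ denote the indices of the parts containing $b_i$ and $b_i'$, respectively. Since $B$ and $B'$ are edges of an $r$-partite hypergraph, each uses one vertex per part, so $\pi$ and $\pi'$ are injective on $\{1,2,3,4\}$; and since $b_i,b_i'$ are distinct vertices of the single edge $A_i$, they lie in distinct parts, so $\pi(i)\ne\pi'(i)$ for all $i$. I now claim there are indices $i\ne j$ with $\pi(i),\pi(j),\pi'(i),\pi'(j)$ all distinct. Call an unordered pair $\{i,j\}\subseteq\{1,2,3,4\}$ \emph{bad} if $\pi(i)=\pi'(j)$ or $\pi(j)=\pi'(i)$. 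For a fixed $i$, injectivity of $\pi'$ gives at most one $j$ with $\pi'(j)=\pi(i)$ (and such $j\ne i$, since $\pi(i)\ne\pi'(i)$); thus at most $4$ ordered pairs witness badness, and distinct bad unordered pairs come from distinct ordered witnesses, so at most $4$ of the $\binom{4}{2}=6$ unordered pairs are bad. Pick a pair $\{i,j\}$ that is not bad; then $\pi(i),\pi(j),\pi'(i),\pi'(j)$ are pairwise distinct, using injectivity of $\pi,\pi'$ together with $\pi(i)\ne\pi'(i)$ and $\pi(j)\ne\pi'(j)$.

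Finally I produce the forbidden configuration. For the pair $\{i,j\}$ just chosen, consider the closed walk
$$b_i,\ B,\ b_j,\ A_j,\ b_j',\ B',\ b_i',\ A_i,\ b_i.$$
Its four edges $B,A_j,B',A_i$ are pairwise distinct ($B\ne B'$ and $B,B'\notin\{A_1,A_2,A_3,A_4\}$), its four vertices $b_i,b_j,b_j',b_i'$ are pairwise distinct (consecutive ones lie in disjoint edges among the $A$'s, while $b_i\ne b_i'$ and $b_j\ne b_j'$ by the first step), and every consecutive pair of vertices is contained in the edge listed between them; moreover the four vertices lie in four distinct parts by the choice of $\{i,j\}$. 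Hence this walk is a rainbow $4$-cycle of $\ma{H}$ (note $r\ge 4$, so four distinct parts are available), contradicting the assumption that $\ma{H}$ contains none. Therefore at most one edge $B$ with $|B\cap(A_1\cup A_2\cup A_3\cup A_4)|=4$ exists. The only step requiring genuine care is the combinatorial count producing a ``good'' index pair $\{i,j\}$; everything else is routine bookkeeping with linearity, $r$-partiteness, and the degree bound.
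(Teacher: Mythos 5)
Your proof is correct and follows essentially the same route as the paper's: assume two such edges $B,B'$, use the degree bound to see that the two intersection points on each $A_i$ are distinct (hence in distinct parts), and then show some pair $\{i,j\}$ yields four intersection vertices in four distinct parts, producing the forbidden rainbow $4$-cycle $b_i,B,b_j,A_j,b_j',B',b_i',A_i,b_i$. The only divergence is in the last step: the paper normalizes $b_i\in V_i$, writes down six equations of the form $(x_{i_1}-i_2)(x_{i_2}-i_1)=0$ and asserts their joint unsatisfiability, whereas you give a short injectivity-based count showing at most four of the six pairs can be ``bad'' --- a cleaner and fully justified version of the same final contradiction.
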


\begin{proof}
 By the linearity of $\ma{H}$, it is clear that for any $B\in\ma{H}\setminus\{A_1,A_2,A_3,A_4\}$ we have $|B\cap(\cup_{i=1}^4 A_i)|\le 4$. Assume towards contradiction that there exist two distinct edges $B,C\in\ma{H}\setminus\{A_1,A_2,A_3,A_4\}$ with $|B\cap(\cup_{i=1}^4 A_i)|=|C\cap(\cup_{i=1}^4 A_i)|=4$. It is easy to see that the eight vertices contained in the two intersections above are pairwise distinct, since $\ma{H}$ contains no vertex with degree larger than two. Then, by the $\ma{G}_r(3r-3,3)$-free property of $\ma{H}$ it is not hard to verify that $B\cap C=\emptyset$. In order to obtain the desired contradiction, we proceed to show that the subhypergraph $\{A_1,A_2,A_3,A_4,B,C\}$ must contain a rainbow $4$-cycle.

 To this end, let $V_1,\ldots,V_r$ be the $r$ vertex parts of $\ma{H}$. Assume without loss of generality that for $1\le i\le 4$, $B\cap A_i=\{b_i\}\in V_i$ and $C\cap A_i=\{c_i\}$. Since $b_i\neq c_i$ and $\ma{H}$ is $r$-partite, it is clear that $c_i\not\in V_i$.
 We claim that for any $1\le i_1<i_2\le 4$, it holds that $\{c_{i_1},c_{i_2}\}\cap(V_{i_1}\cup V_{i_2})\neq\emptyset$. Suppose the opposite, then the vertices $b_{i_1},b_{i_2},c_{i_1},c_{i_2}$ belong to four distinct vertex parts, and
 $$b_{i_1},A_{i_1},c_{i_1},C,c_{i_2},A_{i_2},b_{i_2},B,b_{i_1}$$ clearly form a rainbow 4-cycle, which is a contradiction.

 Assume that $c_1\in V_i$ for some $i\neq 1$. It is clear that there exist at least two distinct $i_1,i_2\in\{2,3,4\}\setminus\{i\}$ such that $c_1\not\in V_{i_1}$ and $c_1\not\in V_{i_2}$. Then, by the claim above, we have that
 $$\{c_1,c_{i_1}\}\cap (V_1\cup V_{i_1})\neq\emptyset\text{ and }\{c_1,c_{i_2}\}\cap (V_1\cup V_{i_2})\neq\emptyset,$$

 \noindent which implies that $\{c_{i_1},c_{i_2}\}\in V_1$. However, it is impossible since by definition we also have $\{c_{i_1}, c_{i_2}\}\s C$ and $|C\cap V_1|=1$. 
\end{proof}

Now we are in a position to present the proof of Theorem \ref{7r-11}. 


\begin{proof}[\textbf{Proof of Theorem \ref{7r-11}}]
    Assume toward contradiction that $\ma{H}$ is not $\ma{G}_r(7r-11,7)$-free. Then, there exists a subhypergraph $\ma{H}'=\{A_1,\ldots,A_7\}\s\ma{H}$ such that $|\cup_{i=1}^7 A_i|\le 7r-11$. 
    The proof is divided into two parts, according to whether $\ma{H}'$ is $\ma{G}_r(6r-9,6)$-free.

    \vspace{10pt}

    \noindent\textbf{Case 1.} $\ma{H}'$ is not $\ma{G}_r(6r-9,6)$-free.

    \vspace{10pt}


    Suppose that $A_1,\ldots,A_6$ are the six edges whose union contains at most $6r-9$ vertices. Denote $X=\cup_{i=1}^6 A_i$. It follows from Theorem \ref{clafy6r-9} that $|X|=6r-9$. 
    Consequently, 
    $$7r-11\ge|X\cup A_7|=|X|+|A_7|-|X\cap A_7|=6r-9+r-|X\cap A_7|,$$

    \noindent which implies that $|X\cap A_7|\ge 2$, contradicting the conclusion of Lemma \ref{int6r-9<2}.

     \vspace{10pt}

    \noindent\textbf{Case 2.} $\ma{H}'$ is $\ma{G}_r(6r-9,6)$-free.

    \vspace{10pt}

    Lemma \ref{maxdeg} then indicates that $\ma{H}'$ contains no vertex of degree larger than two. Assume that $\ma{H}'$ contains $\lambda$ degree two vertices and $\mu$ degree one vertices. It is clear that $\lambda+\mu\le 7r-11$.  Moreover, 
    $$7r=\sum_{x\in V(\ma{H}')}\deg(x)=2\lambda+\mu\le 2\lambda+(7r-11-\lambda)=7r-11+\lambda,$$

    \noindent which implies that $\lambda\ge 11$. Let us count the number of pairs
    $$N:=\left\{(v,A):v\in A,~A\in\ma{H}',~\deg(v)=2\right\}.$$

    \noindent Since $N=2\lambda\ge 22$ and $|\ma{H}'|=7$, there exists at least one edge of $\ma{H}'$ that contains at least four degree two vertices (as these four vertices must be located in four distinct vertex parts, the theorem holds trivially for $r=3$). Assume that $A_7$ is the edge that contains at least four degree two vertices of $V(\ma{H}')$. Let $A_1,A_2,A_3,A_4$ be the four edges each of which shares a degree two vertex with $A_7$.
   For $1\le i\le 4$, let $A_7\cap A_i=\{a_i\}$.

       Using the argument which proves Lemma \ref{e-1toe} (2), one can infer that $A_1,A_2,A_3,A_4$ are pairwise disjoint. Let us apply Lemma \ref{e-1toe} separately to $A_1,A_2,A_3,A_4$. For each $1\le i\le 4$, there exist three pairwise disjoint edges $A_{i_1},A_{i_2},A_{i_3}\in\{A_1,\ldots,A_7\}\setminus\{A_i\}$ such that $|A_{i_l}\cap A_i|=1$ for every $1\le l\le 3$. It follows fairly straightforwardly from the pairwise disjointness of $A_1,A_2,A_3,A_4$ that we must have  $\{A_{i_1},A_{i_2},A_{i_3}\}=\{A_5,A_6,A_7\}$ for all $1\le i\le 4$. Moreover, it is also not hard to see that the twelve vertices contained in the intersections $$A_5\cap(\cup_{i=1}^4 A_i),~A_6\cap(\cup_{i=1}^4 A_i),~A_7\cap(\cup_{i=1}^4 A_i)$$

       \noindent are all distinct, which further implies that
       $$|A_5\cap(\cup_{i=1}^4 A_i)|=|A_6\cap(\cup_{i=1}^4 A_i)|= |A_7\cap(\cup_{i=1}^4 A_i)|=4.$$ Therefore, we arrive at a contradiction by Lemma \ref{inter4atmost1}, completing the proof of the theorem.
    \end{proof}

     \subsection{$\ma{G}_r(8r-13,8)$-free hypergraphs}

      \noindent In this subsection, we will prove the following theorem.

       \begin{theorem}\label{8r-13}
    Let $r\ge 3$ be a positive integer, and let $\ma{H}$ be a linear $r$-partite $r$-graph. Assume that $\ma{H}$ contains no rainbow cycles of length three or four. Then $\ma{H}$ is $\ma{G}_r(8r-13,8)$-free.
\end{theorem}

\begin{proof}
    Suppose $\ma{H}$ is not $\ma{G}_r(8r-13,8)$-free. Let $\ma{H}'=\{A_1,\ldots,A_8\}$ be a subhypergraph of $\ma{H}$ with at most $8r-13$ vertices. Observe that by Theorem \ref{7r-11}, $\ma{H}'$ is $\ma{G}_r(7r-11,7)$-free. Then, it follows by Lemma \ref{maxdeg} that $\ma{H}'$ contains no vertex of degree larger than two. Assume that $\ma{H}'$ contains $\lambda$ degree two vertices and $\mu$ degree one vertices. It is clear that $\lambda+\mu\le 8r-13$.  Moreover, 
    $$8r=\sum_{x\in V(\ma{H}')}\deg(x)=2\lambda+\mu\le 2\lambda+(8r-13-\lambda)=8r-13+\lambda,$$

    \noindent which implies that $\lambda\ge 13$. Similar to the proof of Theorem \ref{7r-11}, by counting the number of pairs $$N:=\{(v,A):v\in A,~A\in\ma{H}',~\deg(v)=2\}$$
    one can show that there exists at least one edge of $\ma{H}'$ that contains at least four degree two vertices.

    Assume that $A_8$ is such an edge. Let $A_1,A_2,A_3,A_4$ be the four edges each of which shares a degree two vertex with $A_8$.
    Again, using the argument which proves Lemma \ref{e-1toe} (2), it is easy to verify that $A_1,A_2,A_3,A_4$ are pairwise disjoint. In the sequel we denote for simplicity that  $X=\cup_{i=1}^4 A_i$ and $Y=\cup_{j=5}^8 A_j$.

    We have the following claim, whose proof is postponed to the end of this subsection.

    \begin{claim}\label{claim-3}
    $A_5,A_6,A_7,A_8$ are pairwise disjoint.
    \end{claim}

    Assuming the correctness of Claim \ref{claim-3}, for each $j\in\{5,6,7\}$, by applying Lemma \ref{e-1toe} to $A_j$ it is not hard to see that there exist at least three distinct $A_{j_1},A_{j_2},A_{j_3}\in\{A_1,A_2,A_3,A_4\}$ such that $A_{j_l}\cap A_j\neq\emptyset$ for every $1\le l\le 3$. On the other hand, since $A_1,A_2,A_3,A_4$ are pairwise disjoint and $|A_8\cap X|=4$, it follows by Lemma \ref{inter4atmost1} that at most three of $\{A_1,A_2,A_3,A_4\}$ can have nonempty intersection with $A_j$. More precisely, we have 

    \begin{equation}\label{eq:new-1}
      |A_8\cap X|=4\text{ and }|A_j\cap X|=3\text{ for $j\in\{5,6,7\}$.}
    \end{equation}

    \noindent 
    Moreover, since $\ma{H}'$ contains no vertex of degree larger than two, the $4+3\times 3=13$ vertices contained in the intersections $A_j\cap X$, $5\le j\le 8$, are all distinct, which further implies that
    $$|X\cap Y|=13.$$

    Since $A_5,A_6,A_7,A_8$ are pairwise disjoint, by Lemma \ref{inter4atmost1} there exists at most one $i\in[4]$ such that $|A_i\cap Y|=4$. Therefore, there exists precisely one $i\in[4]$ such that $|A_i\cap Y|=4$, and for every $i'\in[4]\setminus\{i\}$, $|A_{i'}\cap Y|=3$.

    For notational convenience, we assume that $|A_1\cap Y|=4$.
    According to \eqref{eq:new-1}, for each $j\in\{5,6,7\}$ there exists a subset $I_j\s[4]$ with $|I_j|=3$ such that $|A_j\cap A_i|=1$ for all $i\in I_j$. It is not too difficult to check that $$\{I_5,I_6,I_7\}=\big\{\{1,2,3\},\{1,2,4\},\{1,3,4\}\big\},$$

    \noindent since otherwise one would find an $i'\in[4]\setminus\{1\}$ such that $|A_i\cap Y|=4$, which is a contradiction.

    By symmetry, we may assume that $I_5=\{1,2,3\}$, $I_6=\{1,2,4\}$ and $I_7=\{1,3,4\}$. For $1\le i\le 4$, assume without loss of generality that $A_8\cap A_i=\{a_i\}\in V_i$. Moreover, let us denote for $i\in I_5$, $A_i\cap A_5=\{b_i\}$, for $i\in I_6$, $A_i\cap A_6=\{c_i\}$, and for $i\in I_7$, $A_i\cap A_7=\{d_i\}$.

    To conclude the proof, we proceed to show that $\ma{H}'$ has to contain a rainbow 4-cycle.
    Note that the argument presented below is in some sense 
    in the spirit of Lemma \ref{inter4atmost1}.

    We will make use of the following two claims, whose proofs are easy to present.

    \begin{claim}\label{claim-4}
    For $i_1,i_2\in I_5$ (resp. $I_6$ and $I_7$) and $i_1\neq i_2$, it holds that $\{b_{i_1},b_{i_2}\}\cap(V_{i_1}\cup V_{i_2})\neq\emptyset$ (resp. $\{c_{i_1},c_{i_2}\}\cap(V_{i_1}\cup V_{i_2})\neq\emptyset$ and $\{d_{i_1},d_{i_2}\}\cap(V_{i_1}\cup V_{i_2})\neq\emptyset$).
    \end{claim}

    \vspace{10pt}

    \noindent{\it Proof.} By symmetry, it is sufficient to prove the claim for $I_5$. Assume towards contradiction that $\{b_{i_1},b_{i_2}\}\cap(V_{i_1}\cup V_{i_2})\neq\emptyset$, then
 $$b_{i_1},A_{i_1},a_{i_1},A_8,a_{i_2},A_{i_2},b_{i_2},A_5,b_{i_1}$$
 clearly form a rainbow 4-cycle, which is a contradiction.

   \vspace{10pt}

  \begin{claim}\label{claim-5}
$\{b_1,b_2,b_3\}\s V_1\cup V_2\cup V_3$, $\{c_1,c_2,c_4\}\s V_1\cup V_2\cup V_4$, and $\{d_1,d_3,d_4\}\s V_1\cup V_3\cup V_4$.
    \end{claim}

    \vspace{10pt}

    \noindent{\it Proof.} We will only prove the claim for $\{b_1,b_2,b_3\}$. Assume towards contradiction that $\{b_1,b_2,b_3\}\nsubseteq \cup_{i=1}^3 V_i$, say,
    $b_1\not\in\cup_{i=1}^3 V_i$. Then, by Claim \ref{claim-4} we have that $$\{b_1,b_2\}\cap(V_1\cup V_2)\neq\emptyset\text{ and }\{b_1,b_3\}\cap(V_1\cup V_3)\neq\emptyset.$$
    Since $A_2\cap V_2=\{a_2\},~A_3\cap V_3=\{a_3\}$, by the $r$-partiteness of $\ma{H}$ it is easy to see that $b_2\not\in V_2$ and $b_3\not\in V_3$. Therefore, the last two inequalities hold if and only if $\{b_2,b_3\}\s V_1$, which is impossible. 

    \vspace{10pt}

    Claim \ref{claim-5} implies that $X\cap Y\s \cup_{i=1}^4 V_i$. Next, we will choose the vertex classes for the vertices contained in that intersection. Recall that for each $i\in[4]$ we have $a_i\in V_i$. According to $r$-partiteness of $\ma{H}$ and the definitions of $b_i,c_j,d_k$, it is not hard to see that for all
    $i,j,k\in[4]$, we have  $b_i\not\in V_i$, $c_j\in V_j$ and $d_k\not\in V_k$.
    Note that by assumption we have $A_1\cap Y=A_1\cap (\cup_{i=1}^4 V_i)=\{a_1,b_1,c_1,d_1\}$.
    Moreover,  by Claim \ref{claim-5} we have $b_1\not\in V_4,~c_1\not\in V_3$, and $d_1\not\in V_2$.
    Then, it is not hard to check that either $b_1\in V_2,~c_1\in V_4,~d_1\in V_3$, or $b_1\in V_3,~c_1\in V_2,~d_1\in V_4$.

    For the first case, one can draw the following Table \ref{tab4deg2inonee=8}. Using Claims \ref{claim-4} and \ref{claim-5}, below we will show that given the vertex classes of $b_1,c_1,d_1$,  the vertex classes of  $b_2,b_3,c_2,c_4,d_3,d_4$ are also determined. 

    \begin{table}[ht]
      \begin{center}
       \begin{tabular}{|c|c|c|c|c|c|c|c|c|}
      \hline
       & $A_1$ & $A_2$ & $A_3$ & $A_4$ & $A_5$ & $A_6$ & $A_7$ & $A_8$ \\\hline
      $V_1$ & $a_1$ &       &       &       &       &       &       & $a_1$ \\\hline
      $V_2$ & $b_1$ & $a_2$ &       &       & $b_1$ &       &       & $a_2$  \\\hline
      $V_3$ & $d_1$ &       & $a_3$ &       &       & $d_1$ &       & $a_3$\\\hline
      $V_4$ & $c_1$ &       &       & $a_4$ &       &       & $c_1$ & $a_4$\\\hline
    \end{tabular}
        \end{center}
        \caption{The first case, $b_1\in V_2,~c_1\in V_4,~d_1\in V_3$}\label{tab4deg2inonee=8}
    \end{table}

    \noindent Indeed, 
    \begin{itemize}
        \item using $\{b_2,b_3\}\cap (V_2\cup V_3)\neq\emptyset$ and $b_1\in V_2$ one can infer that $b_2\in V_3$; moreover, using $\{b_1,b_3\}\cap (V_1\cup V_3)\neq\emptyset$, one can infer that $b_3\in V_1$;

        \item similarly, using $\{c_2,c_4\}\cap (V_2\cup V_4)\neq\emptyset$ and $c_1\in V_4$ one can infer that $c_4\in V_2$; moreover, using $\{c_1,c_2\}\cap (V_1\cup V_2)\neq\emptyset$, one can infer that $c_2\in V_1$;

        \item lastly, using $\{d_3,d_4\}\cap (V_3\cup V_4)\neq\emptyset$ and $d_1\in V_3$ one can infer that $d_3\in V_4$; moreover, using $\{d_1,d_4\}\cap (V_1\cup V_4)\neq\emptyset$, one can infer that $d_4\in V_1$.
    \end{itemize}

    To sum up, we can complete Table \ref{tab4deg2inonee=8} into the following Table \ref{table-new-1}. It is easy to see that the hypergraph described by Table \ref{table-new-1} contains a rich structure of rainbow 4-cycles. For example,
    $$b_1,A_1,c_1,A_6,c_2,A_2,b_2,A_5,b_1$$
    form a rainbow 4-cycle, which is a contradiction.

    \begin{table}[h]
      \begin{center}
       \begin{tabular}{|c|c|c|c|c|c|c|c|c|}
      \hline
            & $A_1$ & $A_2$ & $A_3$ & $A_4$ & $A_5$ & $A_6$ & $A_7$ & $A_8$ \\\hline
      $V_1$ & $a_1$ & $c_2$ & $b_3$ & $d_4$ & $b_3$ & $c_2$ & $d_4$ & $a_1$ \\\hline
      $V_2$ & $b_1$ & $a_2$ &       & $c_4$ & $b_1$ & $c_4$ &       & $a_2$  \\\hline
      $V_3$ & $d_1$ & $b_2$ & $a_3$ &       & $b_2$ &       & $d_1$ & $a_3$\\\hline
      $V_4$ & $c_1$ &       & $d_3$ & $a_4$ &       & $c_1$ & $d_3$ & $a_4$\\\hline
    \end{tabular}
        \end{center}
        \caption{A completion of Table \ref{tab4deg2inonee=8}}\label{table-new-1}
    \end{table}

    The second case can be proved analogously. We omit its proof for the sake of saving space. We conclude that if $\ma{H}'$ is not $\ma{G}_r(8r-13,8)$-free, then it must contain a rainbow 4-cycle, completing the proof of the theorem.
\end{proof}

It remains to prove Claim \ref{claim-3}.

\begin{proof}[\textbf{Proof of Claim \ref{claim-3}}]


    We first show that $A_5,A_6$ and $A_7$ are pairwise disjoint.
    Assume towards contradiction that at least one pair of $A_5,A_6,A_7$ is intersecting, say, $A_6\cap A_7\neq\emptyset$.  Let us apply Lemma \ref{e-1toe} separately to $A_1,A_2,A_3,A_4$. Then, for each $1\le i\le 4$, there exist three pairwise disjoint edges $A_{i_1},A_{i_2},A_{i_3}\in\{A_1,\ldots,A_8\}\setminus\{A_i\}$ such that $|A_{i_l}\cap A_i|=1$ for every $1\le l\le 3$.
    Due to the disjointness of $A_1,A_2,A_3,A_4$, for each $1\le i\le 4$, the edges $A_{i_1},A_{i_2},A_{i_3}$ must be chosen from $\{A_5,A_6,A_7,A_8\}$. Hence, it is clear that for each $i$, at least two of $A_5,A_6,A_7$ must have nonempty and distinct intersections with $A_i$.

    For notational convenience, let us call such two edges an intersecting pair of $A_i$. By the $\ma{G}_r(3r-3,3)$-free property of $\ma{H}$, it is easy to see that the two edges that form an intersecting pair of some $A_i$ must be disjoint.
    Therefore, given the assumption that $A_6\cap A_7\neq\emptyset$, the intersecting pair for each $A_i$ can only be either $(A_5,A_6)$ or $(A_5,A_7)$. Since $A_5$ appears in both choices, it follows that $A_5\cap A_i\neq\emptyset$ for each $1\le i\le 4$. Consequently, we have that
    $$|A_5\cap X|=|A_8\cap X|=4,$$
    and we arrive at a contradiction by Lemma \ref{inter4atmost1}. Therefore, $A_5,A_6,A_7$ must be pairwise disjoint, as needed. 


    To prove the claim, it remains to show that for each $j\in\{5,6,7\}$, it holds that $A_j\cap A_8=\emptyset$.
    By symmetry, it is sufficient to show $A_5\cap A_8=\emptyset$. Assume towards contradiction that $A_5\cap A_8\neq\emptyset$. Applying Lemma \ref{e-1toe} to $A_5$, it follows that there exist three pairwise disjoint edges $A_{j_1},A_{j_2},A_{j_3}\in\{A_1,\ldots,A_8\}\setminus\{A_5\}$ such that $|A_{j_l}\cap A_5|=1$ for every $1\le l\le 3$.
    Since $A_5,A_6,A_7$ are pairwise disjoint, clearly $A_{j_1},A_{j_2},A_{j_3}$ can only be chosen from $\{A_1,A_2,A_3,A_4,A_8\}$. So there is at least one $i\in[4]$ such that $A_i\cap A_5\neq\emptyset$ and $A_i\cap A_5\neq A_8\cap A_5$. We conclude that $A_i,A_5,A_8$ are pairwise intersecting and they do not share a common vertex. It follows that $|A_i\cup A_5\cup A_8|\le 3r-3$, violating the $\ma{G}_r(3r-3,3)$-free property of $\ma{H}$.

    The proof of the claim is thus completed.
\end{proof}

\subsection{Proof of Theorem \ref{main}}

\noindent It is clear that Theorem \ref{main} is a direct consequence of Theorems \ref{3r-3im4r-5}, \ref{3r-3im5r-7}, \ref{7r-11} and \ref{8r-13}.

 \section{Linear 3-graphs containing neither triangles nor grids}\label{new-section-1}

\noindent The goal of this section is to prove Theorem \ref{96free}. Let $N\s\mathbb{F}_7^n$ be a subset that has no nontrivial solution to the equation $m_1+2m_2=3m_3$. It was known by the result of \cite{LinWolf} that there exists such a subset $N$ with size $|N|=\Omega(7^{2n/3})$. Next, let us construct a 3-partite 3-graph $\ma{H}_N$ which satisfies the conclusion of Theorem \ref{96free}. The vertex set $V(\ma{H}_N)$ is the disjoint union of three pairwise disjoint sets $V_1,V_2,V_3$, where for each $1\le i\le 3$, $V_i$ is a copy of $\mathbb{F}_7^n$. The edge set is defined as
   \begin{equation*}
       \ma{H}_N=\{(y,y+m,y+3m):y\in\mathbb{F}_7^n,~m\in N\}\s V_1\times V_2\times V_3=(\mathbb{F}_7^n)^3.
   \end{equation*}

The following lemma is easy to prove.

\begin{lemma}\label{3-graph-lemma}
     $\ma{H}_N$ is linear and $\ma{G}_3(6,3)$-free.
\end{lemma}

\begin{proof}
     Similar to the proofs of Lemma \ref{linear} and Theorem \ref{sumfree-rainbow}, it is easy to see that $\ma{H}_N$ is linear and contains no rainbow 3-cycles, respectively. Then, it follows from Lemma \ref{noTim63} that $\ma{H}_N$ is also $\ma{G}_3(6,3)$-free, as needed.
\end{proof}

\begin{proof}[\textbf{Proof of Theorem \ref{96free}}]
We proceed to show that $\ma{H}_N$ is $\ma{G}_3(9,6)$-free (thus it is also $G_{3\times 3}$-free). Assume towards contradiction that there exist six distinct edges $A_1,A_2,A_3,A_4,A_5,A_6$ of $\ma{H}_N$, whose union contains at most nine vertices.
We can represent those edges by the following table,


        \begin{center}
    \begin{tabular}{|c|c|c|c|c|c|c|}
      \hline
                                     & $A_1$                           & $A_2$                           & $A_3$                           & $A_4$                             & $A_5$                           & $A_6$\\\hline
      $V_1$     & $y_1$     & $y_2$    & $y_3$    &  $y_4$     & $y_5$    & $y_6$ \\\hline
      $V_2$       & $y_1+  m_1$       & $y_2+  m_2$       & $y_3+  m_3$      &  $y_4+  m_4$       & $y_5+  m_5$       & $y_6+  m_6$\\\hline
      $V_3$ & $y_1+3m_1$  & $y_2+3m_2$ & $y_3+3m_3$ &  $y_4+3m_4$ & $y_5+3m_5$ & $y_6+3m_6$\\\hline
    \end{tabular}
        \end{center}

    \noindent where for $1\le i\le 6$, $y_i\in \mathbb{F}_7^n$ and $m_i\in N$, and for $i\neq j$, $(y_i,m_i)\neq (y_j,m_j)$. Suppose that $A_i,~1\le i\le 6$, satisfy the conclusions of Theorem \ref{clafy6r-9}. Then, the following nine equalities must hold simultaneously:

    \begin{equation*}
      \left.\begin{aligned}
&y_4=y_1, \quad\quad\quad\qquad\qquad y_5=y_2, \quad\qquad\qquad\quad\quad y_6=y_3,\\
&y_4+ m_4=y_2+m_2, \quad\quad y_5+ m_5=y_3+m_3, \quad\quad y_6+ m_6=y_1+m_1,\\
&y_4+3m_4=y_3+3m_3, \quad y_5+3m_5=y_1+3m_1, \quad y_6+3m_6=y_2+3m_2.\\
      \end{aligned}\right.
    \end{equation*}


    \noindent We can rearrange the six equations in the bottom two rows as follows:
        \begin{equation*}
      \left.\begin{aligned}
&y_1+ m_4=y_2+m_2, \quad\quad y_2+ m_5=y_3+m_3, \quad\quad y_3+ m_6=y_1+m_1,\\
&y_1+3m_4=y_3+3m_3, \quad y_2+3m_5=y_1+3m_1, \quad y_3+3m_6=y_2+3m_2.\\
      \end{aligned}\right.
    \end{equation*}

\noindent In order to eliminate the $y_i$'s, by adding separately the both hand sides of the first and the fifth, the second and the sixth, and the third and the fourth equations, we obtain the following three identities:

$$m_4+3m_5=m_2+3m_1,\quad  m_5+3m_6=m_3+3m_2,\quad m_6+3m_4=m_1+3m_3.$$



\noindent As we are working on the finite field $\mathbb{F}_7$, it is easy to check by the equalities above that $$m_3+2m_2=3m_1\quad\text{ and }\quad m_5+2m_4=3m_5.$$
It thus follows from the definition of $N$ that $m_1=m_2=m_3$ and $m_4=m_5=m_6$, which implies that $(y_i,m_i)=(y_j,m_j)$ for all $i\neq j$, a contradiction.

The proof of Theorem \ref{96free} is thus completed.
\end{proof}

As mentioned in the introduction, Ellenberg and Gijswijt \cite{ellenberg2016large} showed that $r(\mathbb{F}_7^n)<c^n$ for some positive constant $c<7$. So the hypergraph $\ma{H}_N$ can never be used to show that $f_3(n,9,6)>n^{2-o(1)}$ holds for sufficiently large $n$. However, it is possible to prove the conjectured lower bound by using other solution-free sets. With the notation in \cite{Furediconst}, let $r(n,\star)$ denote the maximum size of a subset $M\s[n]$ with no nontrivial solution to the equation $$2x+2y=3z+w.$$ Ruzsa \cite{ruzsa} showed that $\Omega(n^{0.5})=r(n,\star)=o(n)$ and asked whether $r(n,\star)>n^{1-o(1)}$. Indeed, if one could answer the above question affirmatively, then it would imply that $f_3(n,9,6)>n^{2-o(1)}$.

To see this, we can construct a 3-partite 3-graph $\ma{H}_M$ as follows. The vertex set $V(\ma{H}_M)$ is the disjoint union of three pairwise disjoint sets $V_1,V_2,V_3$, where for each $1\le i\le 3$, $V_i$ is a copy of $[n^{1+o(1)}]$. The edge set is defined as
   \begin{equation*}
       \ma{H}_M=\{(y,y+m,y+2m):y\in[n],~m\in M\}\s V_1\times V_2\times V_3.
   \end{equation*}
\noindent It is not hard to see that $\ma{H}_M$ is linear and $\ma{G}_3(6,3)$-free. Similar to the proof of Theorem \ref{96free}, using Theorem \ref{clafy6r-9} one can also show that $\ma{H}_M$ must be $\ma{G}_3(9,6)$-free,which implies that $f_3(n,9,6)>n\cdot r(n,\star)$.


\section{A general upper bound for $f_r(n,v,e)$}\label{new-section-2}

\noindent In this section we will present the proof of Theorem \ref{generalF}. To that end, we first prove the following  recursive inequality, as stated in Lemma \ref{recursive} below. 
Let $R=er-v$ and denote $T_r(n,R,e):=f_r(n,v,e)$. We find that it is more convenient to work with $T_r(n,R,e)$ rather than $f_r(n,v,e)$. 

\begin{lemma}\label{recursive}
    For any $l\in[r]$, it holds that
    \begin{equation*}
        \begin{aligned}
            T_r(n,R,e)\le T_r(n,R-l,e-1)+\binom{n}{l}/\binom{r}{l}.
        \end{aligned}
    \end{equation*}
\end{lemma}

\begin{proof}
    For any $r$-graph $\ma{H}$ on $n$ vertices, we claim that there exists a subhypergraph $\ma{F}_{\ma{H}}\s\ma{H}$ with cardinality at most $\binom{n}{l}/\binom{r}{l}$ such that for any $A\in\ma{H}\setminus\ma{F}_{\ma{H}}$, there exists $B\in\ma{F}_{\ma{H}}$ with $|B\cap A|\ge l$.

    Let $\ma{F}$ be a maximal subfamily of $\ma{H}$ with the property that for any distinct $A,B\in\ma{F}$, $|A\cap B|\le l-1$. Next, we show that it is sufficient to take $\ma{F}_{\ma{H}}:=\ma{F}$. Indeed, if there is some edge $A\in\ma{H}\setminus\ma{F}$ with $|A\cap B|\le l-1$ for every $B\in\ma{F}$, then $\ma{F}\cup\{A\}$ also has the required property,  contradicting the maximality of $\ma{F}$. Therefore, to prove the claim it remains to show that $|\ma{F}|\le \binom{n}{l}/\binom{r}{l}$, which follows easily from the observation that any pair of distinct edges of $\ma{F}$ do not share a common $l$-subset.


    Let $\ma{H}$ be a $\ma{G}_r(v,e)$-free $r$-graph, and let $v=er-R$. Denote $\ma{H}'=\ma{H}\setminus\ma{F}_{\ma{H}}$. According to the claim above, it is clear that $$|\ma{H}|=|\ma{H}'|+|\ma{F}_{\ma{H}}|\le|\ma{H}'|+\binom{n}{l}/\binom{r}{l}.$$

    \noindent To prove the lemma, it suffices to show that $\ma{H}'$ is $\ma{G}_r(v',e-1)$-free with $v'=(e-1)r-R+l$, as by definition $f_r(n,v',e-1)=T_r(n,R-l,e-1)$. Assume for the contradiction that there exist distinct edges $A_1,\ldots,A_{e-1}\in\ma{H}'$ with $|\cup_{i=1}^{e-1} A_i|\le (e-1)r-R+l$. Then, by the definition of $\ma{F}_{\ma{H}}$, there exists at least one edge $B\in\ma{F}_{\ma{H}}$ such that $|B\cap A_1|\ge l$. Consequently,

    \begin{equation*}
        \begin{aligned}
          |B\cup(\cup_{i=1}^{e-1}A_i)|=|B|+|\cup_{i=1}^{e-1}A_i|-|B\cap(\cup_{i=1}^{e-1}A_i)|\le r+(e-1)r-R=v,
        \end{aligned}
    \end{equation*}
    \noindent which contradicts the assumption that $\ma{H}$ is $G_r(v,e)$-free.
\end{proof}

%

Lemma \ref{recursive} has several simple consequences, as listed below.

\begin{proposition}\label{recursiveF}
For any $l\in[r]$, it holds that
    \begin{equation*}
        \begin{aligned}
            f_r(n,v,e)\le f_r(n,v-r+l,e-1)+\binom{n}{l}/\binom{r}{l}.
        \end{aligned}
    \end{equation*}
\end{proposition}

\begin{proof}
    Note that $T_r(n,R-l,e-1)=f_r(n,v-r+l,e-1)$. 
\end{proof}



\begin{proposition}\label{generalT}
    Assume that $R=p(e-1)+q$, where $1\le q\le e-1$. Then it holds that
    $$T_r(n,R,e)\le q\binom{n}{p+1}/\binom{r}{p+1}+(e-1-q)\binom{n}{p}/\binom{r}{p}.$$
\end{proposition}

\begin{proof}
    It is easy to verify that $R=q(p+1)+(e-1-q)p$. Then, we can apply Lemma \ref{recursive} repeatedly for $e-1$ times, in which $l$ is chosen to be $p+1$ for $q$ times and to be $p$ for $e-1-q$ times. The conclusion of the proposition then follows from the easy fact that $T_r(n,0,1)=0$.
\end{proof}

\begin{proof}[\textbf{Proof of Theorem \ref{generalF}}]
    The theorem follows easily from Proposition \ref{generalT}. 
\end{proof}

\section*{Acknowledgements}
\noindent The authors wish to express their gratitude to the two anonymous reviewers for their careful reading and many constructive comments which are very helpful to the improvement of this paper. In particular, they want to thank one reviewer for bringing \cite{nagle-rodl-schacht} into their attention, and pointing out that Theorem \ref{upperbd} can also be proved by a reduction to \eqref{added}.

G. Ge is supported by the National Natural Science Foundation of China under Grant No. 11971325, National Key Research and Development Program of China under Grant No. 2018YFA0704703, and Beijing Scholars Program.

C. Shangguan is supported by the project of Qilu Young Scholars of Shandong University.

{\small\bibliographystyle{plain}
\bibliography{sparse}}

\end{document}